\documentclass[10pt]{amsart}
\usepackage{fullpage,url,amssymb,enumerate,colonequals}
\usepackage{mathrsfs}
\usepackage[section]{placeins}
\IfFileExists{MnSymbol.sty}{\usepackage{MnSymbol}}{} 
\IfFileExists{extarrows.sty}{\usepackage{extarrows}}{} 
\usepackage{lscape}
\usepackage[all,cmtip]{xy}
\usepackage{graphicx} 
\usepackage{todonotes}
\IfFileExists{algpseudocodex.sty}{\usepackage[indLines=true, noEnd=false]{algpseudocodex}}{} 
\IfFileExists{ot2enc.def}{\usepackage[OT2,T1]{fontenc}}{\usepackage[T1]{fontenc}} 
\usepackage{xcolor} 
\usepackage[
        colorlinks, citecolor=darkgreen,
        backref,
        pdfauthor={Nuno Freitas},
]{hyperref}
\usepackage{comment}
\usepackage{multirow}

\numberwithin{equation}{section}

\newtheorem{lemma}[equation]{Lemma}
\newtheorem{theorem}[equation]{Theorem}
\newtheorem{proposition}[equation]{Proposition}

\newtheorem{corollary}[equation]{Corollary}

\newtheorem{claim*}{Claim}

\newtheorem{defn}[equation]{Definition}

\theoremstyle{definition}
\newtheorem{remark}[equation]{Remark}

\definecolor{darkgreen}{rgb}{0,0.5,0}
\definecolor{rem}{rgb}{0.8,0,0}
\definecolor{new}{rgb}{0.3,0.1,0.9}
\definecolor{reply}{rgb}{0,0,0.8}
\definecolor{gray}{gray}{0.7}

\renewcommand{\det}{\text{det}}

\newcommand{\F}{\mathbb{F}}
\newcommand{\PP}{\mathbb{P}}
\newcommand{\Q}{\mathbb{Q}}

\newcommand{\Z}{\mathbb{Z}}

\newcommand{\Gal}{\text{Gal}}

\newcommand{\GL}{\text{GL}}
\newcommand{\SL}{\text{SL}}
\newcommand{\SD}{\operatorname{SD}}

\newcommand{\Id}{\operatorname{Id}}
\newcommand{\Aut}{\operatorname{Aut}}
\newcommand{\Spec}{\operatorname{Spec}}

\renewcommand{\ker}{\text{ker}\:}

\DeclareMathOperator{\Tr}{\text{Tr}}

\DeclareMathOperator{\Hom}{Hom}

\DeclareMathOperator{\ord}{ord}

\DeclareMathOperator{\Frac}{Frac}

\DeclareMathOperator{\CM}{CM}

\DeclareMathOperator{\Image}{\operatorname{Im}}
\DeclareMathOperator{\Fil}{\operatorname{Fil}}

\providecommand{\cl}[1]{\mathcal{#1}}
\providecommand{\dirlim}[1]{\varinjlim\limits_{#1}}

\providecommand{\tbt}[4]{\begin{pmatrix} #1 & #2\\#3&#4\\ \end{pmatrix}}

\providecommand{\p}{\mathfrak{p}}

\providecommand{\barra}[1]{\overline{#1}}
\providecommand{\tenso}{\otimes}

\providecommand{\st}{~\vert~}
\providecommand{\Spec}{\operatorname{Spec}}
\providecommand{\Q}{\mathbb Q}
\providecommand{\Z}{\mathbb Z}

\providecommand{\ma}{\mathbb}

\newcommand{\fpgm}{\text{filtered}~(\varphi,\Gal(K/\Q_p))-\text{module}}
\newcommand{\fpgmt}{\text{filtered}~(\varphi,\Gal(K/\Q_2))-\text{module}}
\newcommand{\upgmt}{\text{unfiltered}~(\varphi,\Gal(K/\Q_2))-\text{module}}

\title{$2$-adic representations of elliptic curves over $\Q_2$}

\date{\today}

\author{Jose Castro-Moreno}
\address{Instituto de Ciencias Matem\'aticas (ICMAT),
          Nicol\'as Cabrera 13-15
         28049 Madrid, Spain}
\email{joseantonio.castro@icmat.es}

\author{Nuno Freitas}
\address{Instituto de Ciencias Matem\'aticas (ICMAT),
          Nicol\'as Cabrera 13-15
         28049 Madrid, Spain}
\email{nuno.freitas@icmat.es}

\thanks{Castro-Moreno's PhD thesis project was supported by a fellowship from the ``La Caixa'' Foundation (ID 100010434). The fellowship code is LCF/BQ/DR24/12080026.}
\thanks{Freitas was partly supported by MICINN (Spain) grant PID 2022-136944NB-I00.}

\keywords{Elliptic Curves, $p$-adic representations, Cristalline representations, Fontaine functors}
\subjclass[2020]{11F80, 11G07, 11S20}

\begin{document}

\maketitle

\begin{abstract}
    We provide a complete classification of the rational $2$-adic Tate modules $V_{2}(E)$ of elliptic curves $E/\Q_2$ with potentially good reduction. The classification is in terms of the $\fpgmt$ associated to $V_2(E)$ via the Fontaine functor $D_{pcris}^{*}$.
\end{abstract}

\section{Introduction}
Let $\ell$ and $p$ be primes. Let $K/\Q_p$ be a finite extension, $E/K$ an elliptic curve, and $T_{\ell}(E)$ its $\ell$-adic Tate module. The action of $G_K$ on $V_{\ell}(E):=T_{\ell}(E)\otimes_{\Z_{\ell}}\Q_{\ell}$ gives rise to a representation $\rho: G_{K}\rightarrow \GL_2(\Q_{\ell})$, after a choice of basis. When $\ell\neq p$, these representations have been extensively studied and are well understood. When $\ell=p$ and $E/K$ has potentially multiplicative reduction, a quadratic twist reduces the problem to a Tate curve whose $p$-adic representation has been studied in detail. Much less is known in the case $\ell=p$ when $E/K$ has potentially good reduction. Indeed, a classification of such representations exists only for $K=\Q_p$ and $p\geq 5$ by the work of Volkov~\cite{Volkov2001} and for $p=3$ due to Bosco~\cite{bosco20233adicrepresentationsarisingelliptic}. The goal of this paper is to treat the case~$p=2$, and hence  complete the classification of $p$-adic representations of elliptic curves over $\Q_p$.

When $\ell\neq p$, the Néron--Ogg--Shafarevich criterion states that an elliptic curve $E/K$ has good reduction if and only if the action of $G_K$ on $V_\ell(E)$ is unramified. When $\ell=p$, this is no longer true, and the notion of being unramified must be replaced by that of being crystalline, yielding the analogous theorem: an elliptic curve $E/K$ has good reduction if and only if $V_p(E)$ is crystalline. These facts are particularly useful, since studying the $\ell$-adic representations of $E$ provides information about its geometry that can then be used to study its $p$-adic representation. The present paper illustrates this interplay. For $p=2$, unlike the cases $p=3$ and $p=5$, the use of $\ell$-adic inertial types of elliptic curves $E/\Q_2$ is essential for identifying the relevant wildly ramified extensions over which these curves acquire good reduction. For $p=5$, tameness avoids this difficulty, while for $p=3$ the extensions are small enough to be found using the \href{https://www.lmfdb.org/padicField/}{LMFDB}. For $p=2$, however, the semistability defect can be as large as $e=24$, with the corresponding extension having a Galois closure of degree $48$ and Galois group $\GL_2(\F_3)$. The greater complexity of $\Gal(K/\Q_2)$ also makes the semilinear objects involved more complicated, affecting both the computations and the theoretical arguments.

The classification uses the anti-equivalence of categories between potentially crystalline representations and filtered $(\varphi,\Gal(\overline{\Q_2}/\Q_2))$-modules provided by the contravariant Fontaine functor $D_{pcris}^{*}$. These objects admit descriptions as $\fpgmt$s with $K/\Q_2$ a finite extension. Using this description, we classify, up to isomorphism, all modules that occur as $D_{pcris}^{*}(V_2(E))$ for $E/\Q_2$. We describe them as $\fpgmt$s, where $K/\Q_2$ is a finite Galois extension over which $E$ has good reduction. Our main theorem is as follows; the objects appearing in the table are defined in \S 4 to \S 10.

\begin{theorem}\label{T: Main Theorem}
    Let $D$ be a $\fpgmt$. Then $D\simeq D_{pcris}^{*}(V_2(E))$ for some elliptic curve $E/\Q_2$ with potentially good reduction if and only if $D$ is represented by one of the $\fpgmt$s listed in Table~\ref{Ta: Main} and defined in \S4-\S10.
\end{theorem}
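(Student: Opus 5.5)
The proof follows the strategy of Volkov and Bosco, adapted to $p=2$: a case analysis on the inertial type. Let $E/\Q_2$ have potentially good reduction. Then $V_2(E)$ is potentially crystalline, and $D=D_{pcris}^{*}(V_2(E))$ is a $\fpgmt$. Here $K$ is a finite Galois extension over which $E$ acquires good reduction; we take $K$ to be the field cut out by the $\ell$-adic inertial type (any fixed $\ell\neq 2$), or a minimal Galois extension containing it. The $\ell$-adic inertial types of elliptic curves over $\Q_2$ are known. They are principal series, supercuspidal (induced from quadratic extensions), or exceptional with image in $\GL_2(\F_3)$ or $\SL_2(\F_3)$. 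The semistability defect is $e\in\{1,2,3,4,6,8,24\}$. Fix these types and, for each, an explicit Galois field $K/\Q_2$ with known $\Gal(K/\Q_2)$.

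\textbf{Step 1 (invariants constraining $D$).}
\begin{itemize}
\item By compatibility of Weil--Deligne representations (Fontaine's construction, and the independence of $\ell$ from Saito/T.~Saito), the Weil--Deligne representation attached to $D$ coincides with the $\ell$-adic one. This fixes the linear action of $\Gal(K/\Q_2)$ on $D$ restricted to inertia, and the characteristic polynomial of $\varphi^{f}$, where $f$ is the residue degree of $K$.
\item $D$ is a rank $2$ free $K_0\otimes K$-type module with a $\varphi$-semilinear Frobenius and a semilinear $\Gal(K/\Q_2)$-action.
\item The filtration on $D_K$ has weights $0$ and $1$, with a single one-dimensional $\Fil^1$ line.
\item $D$ is weakly admissible, so $t_H(D)=t_N(D)$, which here equals $1$.
\item The determinant is the cyclotomic character, so $\det D$ is forced.
\item The Frobenius eigenvalues on $\varphi^{f}$ are Weil numbers of weight $1$: the roots of $X^2-a_{\mathfrak{q}}X+2^{f}$, with $a_{\mathfrak{q}}$ the trace of Frobenius of the reduction over $K$.
\end{itemize}

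\textbf{Step 2 (module-theoretic classification, type by type).} For each type and each allowed value of $a_{\mathfrak q}$, classify all $\fpgmt$s with these invariants.
\begin{itemize}
\item Choose a $K_0$-basis diagonalizing (or otherwise normalizing) the $\Gal(K/\Q_2)$-action dictated by the type.
\item Write $\varphi$ in this basis. Its shape is strongly restricted by commutation with the Galois action and by the characteristic polynomial of $\varphi^{f}$.
\item Parametrize the possible $\Gal(K/\Q_2)$-stable lines $\Fil^1\subset D_K$ that satisfy weak admissibility. The condition to impose is that no $\varphi$- and Galois-stable $K_0$-subspace has Hodge number exceeding its Newton number.
\item Reduce modulo isomorphism, i.e.\ modulo the centralizer of the $(\varphi,\Gal)$-structure. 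The result is the families of \S4--\S10 (a parameter in some $\mathbb{P}^1$-type set, or finitely many modules), giving Table~\ref{Ta: Main}.
\end{itemize}
This proves the ``only if'' direction.

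\textbf{Step 3 (every entry is realized).} Here we use Colmez--Fontaine: every weakly admissible module comes from a potentially crystalline representation $V$ with Hodge--Tate weights $0,1$ and good reduction type over $K$. By Breuil/Kisin, $V$ comes from a $2$-divisible group over $\mathcal{O}_K$ with Galois descent data. Realizing $V$ by an elliptic curve over $\Q_2$ then needs a separate argument; I would try either of the following.
\begin{itemize}
\item Use deformation arguments in the style of Volkov: the moduli of curves with a given type and $a_{\mathfrak q}$ surjects onto the parameter space of filtrations, via Serre--Tate lifting of the reduction $\tilde E/\F_{2^f}$ with the descent datum and Grothendieck's theory of lifting filtrations.
\item Exhibit explicit curves from the LMFDB realizing each isomorphism class, computing $\Fil^1$ via de Rham cohomology $H^1_{dR}(E/K)$.
\end{itemize}
For families parametrized continuously, the Serre--Tate approach is needed. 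Any lift of $\tilde E$ with the descent datum descends to $\Q_2$, and all Hodge lines are attained as the lift varies.

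The main obstacle is Step 2 for the exceptional types with $e=24$ and $e=8$, where $\Gal(K/\Q_2)$ is $\GL_2(\F_3)$ or its $2$-Sylow-heavy subgroups. There the semilinear Galois action is non-abelian, and computing which Fil lines are Galois-stable and which modules are isomorphic becomes intricate. My plan is to first restrict to the inertia subgroup, where the action is $K_0$-linear and given by the $2$-dimensional representation of $\SL_2(\F_3)$. That representation is irreducible, so $D_K$ splits as a tensor product and $\Fil^1$ becomes a single line determined up to the finitely many residual choices. Afterwards I would impose the Frobenius compatibility.
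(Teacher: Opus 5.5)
Your Steps 1--2 follow the paper's overall strategy: classify the unfiltered module type by type, then the Galois-stable lines, then reduce modulo automorphisms. The paper normalizes differently, though. It writes $\Gal(K/\Q_2)=I_{K/\Q_2}\rtimes\langle t\rangle$ with $\langle t\rangle=\Gal(K/L)$. It then uses the sharper invariant that $\varphi$ on $D_0=D^{\langle t\rangle}=D^*_{cris,L}(V_2(E))$ is linear with characteristic polynomial $X^2-a_2X+2$, not only the polynomial of $\varphi^f$. This puts $\varphi$ in companion form with $M(t)=\Id$ before solving for the inertia matrices. The same invariant forces $a_2=0$ whenever $K_0=\Q_4$, and $a_2=\pm2$ in the $C_4$ case.

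Your heuristic in the last paragraph is wrong. The action on $D_K$ that matters for the filtration is the diagonal, semilinear one. Irreducibility of the linear inertia representation on $D$ therefore says nothing about stable $K$-lines; if that action were linear, irreducibility would exclude any stable line at all. By Galois descent, $\Gal(K/\Q_2)$-stable lines in $D_K$ correspond to $\Q_2$-lines in the $2$-dimensional space $(D_K)^{\Gal(K/\Q_2)}$. When $K_0=\Q_4$ there are no $\varphi$-stable lines, because $\varphi^2=-2$ and $-2$ is not a norm from $\Q_4$. So every such line is admissible, and since unfiltered automorphisms are scalar, distinct lines give non-isomorphic modules. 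For $e=24$ (and $e=3,6,8$ and dihedral $e=4$) you therefore get a full $\PP^1(\Q_2)$ for each $i$ and sign, not finitely many choices.

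The genuine gap is Step 3: it never produces an elliptic curve. You correctly obtain $V$ from Colmez--Fontaine and a $2$-divisible group over $\cl O_L$ from a Kisin-type theorem (for $p=2$ this is Kisin/Kim/Lau/Liu; the paper uses Liu, not Breuil). You then switch to Grothendieck--Messing surjectivity of lifts onto Hodge lines, which is not available here. For $p=2$, $\cl O_L\to\F_2$ is not a PD-thickening with nilpotent divided powers. Relating lifts over $\cl O_L$ to lines in $D_0\otimes L$ is exactly what the Kisin-type theorem you already invoked provides. Likewise, ``any lift of $\tilde E$ with the descent datum descends'' presupposes what must be built: a residual curve whose automorphisms realize the inertia action, and an extension of the $G_K$-action to $G_{\Q_2}$.

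The missing steps are as follows.
\begin{enumerate}
\item[(i)] \emph{Galois pairs.} Take $E_3\colon y^2+y=x^3$, with $\Aut_{\F_4}(E_3)\simeq\SL_2(\F_3)$, for $a_2=0$, and $E_1,E_5$, with $C_4$ automorphisms, for $a_2=-2,2$. Choose $\nu\colon\Gal(K/\Q_2)\to\Aut(E)\rtimes\Gal(\F_4/\F_2)$ and check that $D(E)$ satisfies condition (A), so the classification identifies it with the unfiltered module underlying $D$.
\item[(ii)] \emph{Matching the special fiber.} Use Fontaine's equivariant comparison to get an inertia-equivariant quasi-isogeny $E_0[2^\infty]\to G_0$ to the special fiber of the Kisin group $\cl G_0/\cl O_L$. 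Replacing $E_0$ by $E_0'=E_0/\ker$ makes this an isomorphism, with inertia still acting through automorphisms.
\item[(iii)] \emph{Lifting.} Apply Serre--Tate and Grothendieck algebraization to lift $E_0'$ together with that specific $\cl G_0$ to a curve over $\cl O_L$.
\item[(iv)] \emph{Descent.} Apply Volkov's criterion, whose hypothesis is precisely that inertia acts on $D(E')$ through $k$-automorphisms, to descend to $\Q_2$ with $V_2(E)\simeq V$.
\end{enumerate}
For $e=8,24$ the Galois pair realizes only one, a priori unknown, sign. The other sign comes from twisting by the unramified quadratic character (Lemmas~\ref{L: Twist for e=8} and~\ref{L: Twist for e=24}). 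Your LMFDB option cannot reach the continuous families, as you note yourself.
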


\begin{table}[ht]
    \centering
    \begin{tabular}{|c|c|c|c|l|c|c|}
        \hline
        $e$ & Reduction type                             & $K$                     & Frobenius            & $\Gal(K/\Q_2)$ & $D_{cris,K}^{*}(V_2(E))$                                   & $\#\alpha$    \\
        \hline

        $1$ & Supersingular                              & $\Q_2$
            & $a_2=0$                                    & $1$                     & $D_{c}(1,0)$                    &                 $1$                                                       \\
        \cline{4-7}
            &                                            &
            & $a_2=\pm 2$                                & $1$                     & $D_{c}(1,\pm2)$                    &                 1                                                         \\
        \cline{2-7}
            & Ordinary                                   & $\Q_2$
            & $a_2=\pm1$                                 & $1$                     & $D_{c}(1,\pm1,\alpha)$                    &                 2                                                         \\
        \hline

                $2$ & Supersingular & $\Q_2(\sqrt{2})$
            & $a_2=0$ & $C_2$
            & $D_c(1,0)\otimes D_{\varepsilon_2}$ & 1 \\
        \cline{4-7}
            & &
            & $a_2=\pm2$ & $C_2$
            & $D_c(1,\pm2)\otimes D_{\varepsilon_2}$ & 1 \\
        \cline{3-7}
            & & $\Q_2(\sqrt{-2})$
            & $a_2=0$ & $C_2$
            & $D_c(1,0)\otimes D_{\varepsilon_{-2}}$ & 1 \\
        \cline{4-7}
            & &
            & $a_2=\pm2$ & $C_2$
            & $D_c(1,\pm2)\otimes D_{\varepsilon_{-2}}$ & 1 \\
        \cline{3-7}
            & & $\Q_2(\sqrt{-1})$
            & $a_2=0$ & $C_2$
            & $D_c(1,0)\otimes D_{\varepsilon_{-1}}$ & 1 \\
        \cline{4-7}
            & &
            & $a_2=\pm2$ & $C_2$
            & $D_c(1,\pm2)\otimes D_{\varepsilon_{-1}}$ & 1 \\
        \cline{2-7}
            & Ordinary & $\Q_2(\sqrt{2})$
            & $a_2=\pm1$ & $C_2$
            & $D_c(1,\pm1,\alpha)\otimes D_{\varepsilon_2}$ & 2 \\
        \cline{3-7}
            & & $\Q_2(\sqrt{-2})$
            & $a_2=\pm1$ & $C_2$
            & $D_c(1,\pm1,\alpha)\otimes D_{\varepsilon_{-2}}$ & 2 \\
        \cline{3-7}
            & & $\Q_2(\sqrt{-1})$
            & $a_2=\pm1$ & $C_2$
            & $D_c(1,\pm1,\alpha)\otimes D_{\varepsilon_{-1}}$ & 2 \\
        \hline

        $3$ & Supersingular                              & $\Q_2(\sqrt[3]{2},\zeta_3)$
            & $a_2=0$                                    & $D_3$                   & $D_{pc}(3,0,\alpha)$                    &                 $\PP^1(\Q_2)$                                             \\
        \hline

        $4$ & Supersingular                              & $K^{na}_{4,i}~1\leq i\leq 2$
            & $a_2=0$                                    & $D_4$                   & $D_{pc}(4,0,i,\alpha)$ &                 $\PP^1(\Q_2)$                                             \\
        \cline{3-7}
            &                                            &
        \multirow{2}{*}{$K^{ab}_{4,i}~1\leq i\leq 2$}
            & \multirow{2}{*}{$a_2=\pm2$}
            & \multirow{2}{*}{$C_4$}
            & $D_{pc}(4,\pm2,i,-)$                         &  1                                                                                                 \\
        \cline{6-7}
            &                                            &                         &                      &                & $D_{pc}(4,\pm2,i,+)$      &  1             \\
        \hline

        $6$ & Supersingular                              & $K_{6,i}~1\leq i\leq 3$
            & $a_2=0$                                    & $D_6$
            & $D_{pc}(6,0,i,\alpha)$                       &  $\PP^1(\Q_2)$                                                                                     \\
        \hline

        \multirow{2}{*}{$8$}
            & \multirow{2}{*}{Supersingular}
            & \multirow{2}{*}{$K_{8,i}~1\leq i\leq 6$}
            & \multirow{2}{*}{$a_2=0$}
            & \multirow{2}{*}{$\operatorname{SD}_{16}$}
            & $D_{pc}(8,0,i,\alpha,+)$                     &  $\PP^1(\Q_2)$                                                                                     \\
        \cline{6-7}
            &                                            &                         &                      &                & $D_{pc}(8,0,i,\alpha,-)$  &  $\PP^1(\Q_2)$ \\
        \hline

        \multirow{2}{*}{$24$}
            & \multirow{2}{*}{Supersingular}
            & \multirow{2}{*}{$K_{24,i}~1\leq i\leq 8$ }
            & \multirow{2}{*}{$a_2=0$}
            & \multirow{2}{*}{$\GL_2(\ma F_3)$}
            & $D_{pc}(24,0,i,\alpha,+)$                    &  $\PP^1(\Q_2)$                                                                                     \\
        \cline{6-7}
            &                                            &                         &                      &                & $D_{pc}(24,0,i,\alpha,-)$ &$\PP^1(\Q_2)$ \\
        \hline
    \end{tabular}
    \caption{Summary of the $\fpgmt$s of elliptic curves $E/\Q_2$ with potentially good reduction. The fields $K_{e,i}$ are the Galois closures of the corresponding fields $L$ in Table~\ref{Ta: InTypes}. The last column gives the parameter space of filtration classes when all the other parameters are fixed}
    \label{Ta: Main}
\end{table}

In \S2, we review the definitions and properties of Fontaine's functors and elliptic curves that we will need. In \S3, we combine results from these two theories to derive several properties of a semilinear nature that $D_{pcris}^{*}(V_2(E))$ must satisfy. In \S4--\S10, we determine all possible $\fpgmt$ satisfying these properties and study some relations among them, including which objects are twists of one another. Finally, in \S11, for each $\fpgmt$ $D$, we construct an elliptic curve $E$ such that $D_{pcris}^{*}(V_2(E))\simeq D$. To this end, we first introduce some basic notions concerning Dieudonné modules, together with Volkov's Galois pairs and her descent criterion for elliptic curves. These pairs and the descent criterion play a central role in constructing a curve for each module. In contrast with the case $p\geq 3$ the classification can be deduce without imposing the well know necessary condition of rational traces of the Weil-Deligne representation associated to $\fpgmt$s. This property is deduced a posteriori from the classification. This is due to the small number of possible $a_2$ for $E/\Q_2$.

\subsection{Acknowledgments} The authors would like to thank Maja Volkov, Filip Gawron and Josu Perez for helpful conversations that contributed to the development of this paper. Part of this work was carried out while both authors were visiting the Max Planck Institute for Mathematics in Bonn. The authors are grateful to the Institute for its hospitality and financial support.
\subsection{AI disclosure}
The mathematical work and the corresponding code were carried out without
using AI tools. Subsequently GPT-6 Astra and GPT Sol 6.1 were used for auditing mathematical arguments, the computations and to assist with revisions
and exposition. The authors are fully responsible for the final version.

\section{Background}\label{S:Background} 
\subsection{Notation}\label{Notation}

Let $K/\Q_p$ be a finite extension, $\cl O_{K}$ its ring of integers, and $K_0$ the maximal unramified
subextension of $K/\Q_p$. If $s=[K_0:\Q_p]$, then the residue field of $K$ is $\F_{q}$ with $q=p^s$. Let $\sigma$ be the Frobenius of $K_0/\Q_p$, that is, $\sigma(a)\equiv a^p\pmod p$ for all $a\in\cl O_{K_0}$, and
$\Gal(K_0/\Q_p)=\langle\sigma\rangle$. Let $\overline{K}$ be an algebraic closure, $G_{K}:=\Gal(\overline{K}/K)$ its Galois group, and $K^{un}$ the maximal unramified extension inside $\overline{K}$. We denote by $\chi_p:G_{\Q_p}\rightarrow \Q_p^{\times}$ the $p$-adic cyclotomic character whose defining property is $g(\zeta_{p^n})=\zeta_{p^n}^{\chi_p(g)}$. When regarded as a $\Q_p$-vector space, we denote it by $\Q_p(1)$.

Given a Galois extension $F/\Q_p$, an element $g\in \Gal(F/\Q_p)$, and a scheme $X/F$, we denote by $X^{g}$ the base change of $X$ along $g$, that is, $X^{g}:=X\times_{\Spec(F),g}\Spec(F)$, and we call it the twist of $X$ by $g$. Similarly, for a $p$-divisible group $\cl G=\lim \cl G_n$, the twist is obtained by twisting each of the $\cl G_n$ as well as the transition maps. For an $F$-vector space $V$, the twist is $V^{g}=V\otimes_{F,g}F$, whose defining relation is $\lambda v\otimes \mu=v\otimes g(\lambda)\mu$. The map $m_g:V\rightarrow V^g$ defined by $m_g(v)=v\otimes 1$ is $g$-semilinear, and, given a linear map $f:V^{g}\rightarrow W$, the map $f\circ m_g:V\rightarrow W$ is $g$-semilinear. We will use this identification between linear maps from $V^g$ and semilinear maps from $V$ throughout the paper.

Let $W_K$ be the Weil group of $K$ and $I_K$ its inertia subgroup. The group
$W_K$ is made into a topological group by declaring that $I_K$ is open in $W_K$
and that the subspace topology of $I_K$ in~$W_K$ is the same as the subspace
topology of $I_K$ in $\Gal(\overline{K}/K)$. With this topology, we have the short exact sequence of topological groups
$1\rightarrow I_K\rightarrow
    W_K \rightarrow \Z\rightarrow 1$, where the last map $d:W_K\rightarrow \Z$
is defined so that $g\in W_K$ acts on $\barra{\F}_{q}$ as $x\mapsto x^{{q}^{d(g)}}$ or, equivalently, $\bar{g}=Frob_q^{d(g)}\in \Gal(\overline{\F_{q}}/\F_q)$.

Let $F/\Q_p$ be a finite extension and let $E/F$ be an elliptic curve with potentially good reduction. Let $K/F$ be a Galois extension over which $E$ attains good reduction, minimal with this property. Then the semistability defect $e$ of $E$ is $\vert I_{K/F}\vert$. Equivalently, $e$ is the extension degree $[KF^{un}:F^{un}]$.

Finally, we denote by $Q_8$ the quaternion group of order 8 and by
$\SD_{16}$ the Sylow 2-subgroup of $\GL_2(\F_3)$.

\subsection{Fontaine functors}

This subsection is a brief introduction to the Fontaine functor $D_{pcris}^{*}$.
Readers familiar with it may skip this subsection. We first recall the covariant equivalence for crystalline $G_K$-representations, then add descent data for potentially crystalline $G_{\Q_p}$-representations, and finally dualize, since the contravariant functor is better suited to our purposes.

The functor $D_{pcris}^{*}$ provides an antiequivalence of categories between potentially crystalline representations of $G_{\Q_p}$ and a category of semilinear objects introduced at the end of this subsection. After recalling some definitions, we first introduce the functor $D_{cris,K}$, which provides an equivalence of categories between representations of $G_{\Q_p}$ that become crystalline over $K$ and weakly admissible $\fpgm$s.

\begin{defn}
    Let $K/\Q_p$ be a finite extension. A filtered $\varphi$-module over $K$ is a finite-dimensional
    $K_0$-vector space $D$ equipped with:
\begin{itemize}
        \item A $\sigma$-semilinear bijection $\varphi:D\longrightarrow D$, where $\sigma$ is the Frobenius of $K_0/\Q_p$.

        \item An exhaustive and separated filtration on $D_K:=D\otimes_{K_0}K$, that is, a
              sequence of subspaces $\Fil^{i}D_K\subset D_K$ indexed by $i\in \Z$ such that
              $\Fil^{i+1}D_K\subset \Fil^{i}D_K$ and such that $\Fil^iD_K=D_K$ for $i<<0$ and
              $\Fil^iD_K=0$ for $i>>0$.
    \end{itemize}
    A morphism of filtered $\varphi$-modules from $D$ to $D'$ is a $K_0$-linear map $f:D\longrightarrow D'$ such that $f\circ \varphi=\varphi'\circ f$ and $f\otimes \Id(\Fil^i D_K)\subset \Fil^i D'_K$.
\end{defn}

The category of filtered $\varphi$-modules is denoted by $MF_K^{\varphi}$. To each filtered $\varphi$-module $D$, we associate the following
quantities:

\begin{itemize}
    \item $t_N(D)=v_p(\lambda)$, where $\lambda$ is the element defined by $\Lambda^n\varphi(x)=\lambda x$ for any nonzero $x\in\Lambda^n D$, with $n:=\dim_{K_0}D$. It can be shown that $t_N(D)$ is independent of the choice of $x$.

    \item $t_H(D)=\sum_{i}i\cdot \dim_K(\Fil^i D_K/\Fil^{i+1}D_K)$.
\end{itemize}

We say that $D\in MF^{\varphi}_K$ is weakly admissible if $t_N(D)=t_H(D)$ and
every subobject $\tilde{D}\subset D$ in $MF^{\varphi}_K$ satisfies
$t_N(\tilde{D})\geq t_H(\tilde{D})$. The weakly admissible objects $D\in
    MF^{\varphi}_K$ form an abelian category denoted by $MF^{\varphi,w.a}_K$. The functor
\begin{align*}
    D_{cris,K}:Rep^{cris}(G_K) & \longrightarrow MF^{\varphi,w.a}_K             \\
    V                          & \longrightarrow D_{cris,K}(V):=(B_{cris}\otimes_{\Q_p}V)^{G_K}
\end{align*}
is an equivalence of categories, where $Rep^{cris}G_K$ is the category of crystalline representations of $G_K$ and $B_{cris}$ is the crystalline period ring; see~\cite{Colmez-Fontaine} for details of its construction and properties.

The object $K\{n\}\in MF^{\varphi,w.a.}_{K}$ is the vector space $D=K_0e_1$ with $\varphi(e_1)=p^{-n}e_1$ and filtration $\Fil^{i}D_K=Ke_1$ for $i\leq -n$ and $\Fil^{i}D_K=0$ for $i\geq -n+1$. This object is important because, for $\chi_p$, the $p$-adic cyclotomic character, we have $D_{cris,\Q_p}(\chi_p^n)=\Q_p\{n\}$.

Let $V$ be a $G_{\Q_p}$-representation that becomes crystalline over the Galois extension $K/\Q_p$. Since $G_{\Q_p}$ acts on $B_{cris}\otimes V$, the $K_0$-vector space $D_{cris,K}(V)=(B_{cris}\otimes V)^{G_K}$ carries a natural action of $\Gal(K/\Q_p)$. Considering $D_{cris,K}(V)$ together with this $\Gal(K/\Q_p)$-action allows us to recover the potentially crystalline representation $V$. This motivates the definition of the following category.

\begin{defn} Let $K/\Q_p$ be a finite Galois extension. Then
    a $\fpgm$ is a finite-dimensional $K_0$-vector space $D$ together with:

    \begin{itemize}
        \item A $\sigma$-semilinear bijective Frobenius $\varphi:D \longrightarrow D$.
        \item A semilinear action of $\Gal(K/\Q_p)$, i.e., $g(\lambda
                  v)=g\vert_{K_0}(\lambda)\cdot g(v)$.
              Moreover, this action must commute with the Frobenius action, $\varphi g=g
                  \varphi$.

        \item An exhaustive and separated filtration on $D_K:=D\otimes_{K_0} K$ by $\Gal(K/\Q_p)$-stable subspaces under the diagonal action, i.e., $g(v\otimes
                  \lambda)=g(v)\otimes g(\lambda)$ for $v\in D$ and $\lambda\in K$.

        \item If we forget the Galois action, $D$ is weakly admissible as an object in $MF_{K}^{\varphi}$.
    \end{itemize}
    A morphism of $\fpgm$s from $D$ to $D'$ is a morphism of filtered $\varphi$-modules that also commutes with the action of $\Gal(K/\Q_p)$, i.e., $f\circ g_D=g_{D'} \circ f$ for all $g\in \Gal(K/\Q_p)$.

\end{defn}

\begin{remark}
    The last condition in the definition is nonstandard, but including it simplifies the exposition, since objects $D$ that do not satisfy it play no role in the paper.
 \end{remark}

The functor $D_{cris,K}$ induces an equivalence of categories
between representations of $G_{\Q_p}$ that become crystalline
over $K$ and weakly admissible $\fpgmt$s. Moreover, it is compatible
with tensor products and duals:
 $$D_{cris,K}(V_1\otimes_{\Q_p}V_2)
    \simeq D_{cris,K}(V_1)\otimes_{K_0}D_{cris,K}(V_2),
    \qquad
    D_{cris,K}(V_1^\vee)\simeq D_{cris,K}(V_1)^\vee.$$
For two $\fpgmt$s $D_1$ and $D_2$, these operations are defined
as follows:
\begin{itemize}
    \item The tensor product is $D_1\otimes_{K_0}D_2$, with
    Frobenius and Galois action
    $$\varphi_1\otimes\varphi_2\qquad g(d_1\otimes d_2)=g(d_1)\otimes g(d_2),$$
    and filtration
    $$\Fil^i(D_1\otimes_{K_0}D_2)_K=\sum_{j\in\Z}\Fil^jD_{1,K}\otimes_K\Fil^{i-j}D_{2,K}.$$

    \item The dual of $D_1$ is
    $D_1^\vee:=\Hom_{K_0}(D_1,K_0)$, with Frobenius
    and Galois action given by
    $$\varphi_{D_1^\vee}(f)=\sigma\circ f\circ\varphi_1^{-1},\qquad(g\cdot f)(d)=g\bigl(f(g^{-1}d)\bigr).$$
    Under the natural identification
    $(D_1^\vee)_K\simeq\Hom_K(D_{1,K},K)$, its filtration is
        $$\Fil^i(D_1^\vee)_K=\bigl\{f\in\Hom_K(D_{1,K},K):f(\Fil^{1-i}D_{1,K})=0\bigr\}.$$
\end{itemize}

In practice, we also need to work with representations that become crystalline over different extensions. For example, when
twisting a representation $\rho$ by a character $\varepsilon$, one wants to
work with representations $\rho_1,\rho_2$ of $G_{\Q_p}$ that become crystalline
over different extensions $K_1,K_2$. Since both representations become
crystalline over the compositum $K_1K_2$, we can work with them as
$\text{filtered}~(\varphi,\Gal(K_1K_2/\Q_p))-\text{modules}$, where it makes
sense to take the tensor product. Moreover, it may happen that $\rho_1\otimes\rho_2$ becomes crystalline over a subextension of the compositum $K_1K_2$. Thus, to work with potentially crystalline representations in general, we need to relate $D_{cris,K}(V)$ to $D_{cris,K'}(V)$ for different Galois extensions $K/\Q_p$ and $K'/\Q_p$.

Let $K'/\Q_p$ be a Galois extension and $K/\Q_p$ a Galois subextension. Let $K'_0$ be the maximal unramified subextension
inside $K'$ and $\sigma'$ the Frobenius of the extension $K_0'/\Q_p$. If $V$ is a
 $G_{\Q_p}$-representation that becomes crystalline over $K$, it also becomes crystalline over $K'$. We have an
inclusion $D_{cris,K}(V)\hookrightarrow D_{cris,K'}(V)$ and, by the
definition of the functor, we have
$D_{cris,K'}(V)^{\Gal(K'/K)}=D_{cris,K}(V)$. Conversely, if $D_{cris,K}(V)=D$, then $D_{cris,K'}(V\vert_{G_{K'}})$ is the vector space $D':=D\otimes_{K_0}K'_0$ with
Frobenius $\varphi'=\varphi\otimes \sigma'$ and filtration
$\Fil^{i}D'_{K'}=\Fil^{i}D_K\otimes_K K'$. Finally, if we know how $\Gal(K/\Q_p)$ acts on
$D_{cris,K}(V)$, then the action of $\Gal(K'/\Q_p)$ on $D'$ is given by $g\cdot
    (v\otimes\lambda)=\bar{g}(v)\otimes g(\lambda)$, where
$\bar{g}$ is the image of $g$ in $\Gal(K/\Q_p)$.

This leads to the functor $D_{pcris}$, which provides a
unified way to describe potentially crystalline representations independently of
the extension $K$ over which they become crystalline. By Fontaine's general theory, $D_{cris,K}(V)$ is a $B_{cris}^{G_K}=K_0$-vector space. It follows
that $$ D_{pcris}(V)=\dirlim{[K:\Q_p]<\infty} (B_{cris}\otimes
    V)^{G_{K}}=\dirlim{[K:\Q_p]<\infty} D_{cris,K}(V) $$ is a
$\dirlim{[K:\Q_p]<\infty}B_{cris}^{G_{K}}=\Q^{un}_p$-vector space and
$\dim_{\Q^{un}_p}D_{pcris}(V) \leq \dim_{\Q_p}(V)$ with equality if and only
if $V$ is potentially crystalline. The functor $D_{pcris}$ is an equivalence of
categories between potentially crystalline Galois representations and weakly
admissible filtered $(\varphi,Gal(\overline{\Q}_p/\Q_p))$-modules for which the action of
$Gal(\bar{\Q}_p/\Q_p)$ is smooth, i.e., for every vector $v$ in the
$\Q^{un}_p$-vector space $D$, the subgroup $\operatorname{Stab}(v)$ is open. If $V$ is a
representation of $G_{\Q_p}$ that becomes crystalline over $K$, then
$D_{pcris}(V)$ is completely determined by the $\fpgm$ $D_{cris,K}(V)$ via the base changes described above. Thus, to describe a filtered $(\varphi,Gal(\overline{\Q}_p/\Q_p))$-module, it suffices to give the corresponding $\fpgm$ for a suitable $K$. We use this convention throughout the paper: we say that $D_{pcris}(V_2(E))$ is equal to a certain $\fpgm$ over an extension $K$ over which $E$ attains good reduction. This allows us to work with potentially crystalline objects in general.

\subsection{Elliptic curves}
Let $E/\Q_p$ be an elliptic curve, and set $T_{\ell}=\varprojlim E[\ell^n]$ and $V_\ell(E)=T_\ell(E)\otimes\Q_\ell$. The action of $G_{\Q_p}$ on $V_{\ell}$ defines a representation
$\rho_{E,\ell^{\infty}}:G_{\Q_p}\rightarrow\GL_2(\Q_{\ell})$. The behavior of this representation depends strongly on whether $\ell\neq p$ or $\ell=p$. When
$\ell\neq p$, one can construct from $\rho_{E,\ell^{\infty}}$ a Weil--Deligne
representation $(\rho_{WD},N):W_{\Q_p}\rightarrow \GL_2(\ma C)$; see~\cite{Rohrlich1994} for a detailed exposition. This
Weil--Deligne representation contains the same information as
$\rho_{E,\ell^{\infty}}$ and is independent of the prime $\ell\neq p$ chosen.

 Moreover, the Néron--Ogg--Shafarevich criterion states that $E$ attains good
reduction over the extension $K/\Q_p$ if and only if
$\rho_{E,\ell^{\infty}}\vert_{G_K}$ is unramified or, equivalently, if and only
if $N=0$ and $\rho_{WD}\vert_{W_K}$ is unramified. When $\ell=p$, the
representation $\rho_{E,p^{\infty}}$ is much more complex and less well understood.
For instance, the analogue of the Néron--Ogg--Shafarevich criterion states that $E$ has good
reduction over $K$ if and only if $\rho_{E,p^{\infty}}\vert_{G_K}$ is
crystalline, a notion much more involved than that of being unramified. Moreover, in order to
define the Weil--Deligne representation associated to $\rho_{E,p^{\infty}}$, one
needs to pass first through the potentially semistable Fontaine functor $D_{pst}$
and from there to $\rho_{WD}$. We give the construction when $E$
has potentially good reduction or, equivalently, when $\rho_{E,p^{\infty}}$ is
potentially crystalline. Let $D:=D_{cris,K}(V_p(E))$ be a $\fpgm$. Then the
associated Weil--Deligne representation is \begin{align*}
    \rho_{WD}(D):W_{\Q_p} & \longrightarrow Aut_{K_0}(D)\simeq \GL_2(K_0)\hookrightarrow\GL_2(\barra{\Q}_p)\simeq \GL_2(\ma C) \\
    g                     & \longrightarrow \barra{g}\cdot \varphi^{-d(g)}.
\end{align*}
where $\bar{g}$ is the class of $g$ in $\Gal(K/\Q_p)=W_{\Q_p}/W_K$.

 Finally, the Weil--Deligne representation associated to $\rho_{E,p^{\infty}}$ is $\rho_{WD}(D_{cris,K}(\rho_{E,p^{\infty}}))$. Note that the filtration of $D$ does not play any role in the definition of $\rho_{WD}(D)$; thus, unlike in the $\ell\neq p$ case, the Weil--Deligne representation retains less information than $\rho_{E,p^{\infty}}$. With this definition, the Weil--Deligne representation associated to $\rho_{E,\ell^{\infty}}$ is independent of $\ell$ for all $\ell$, including $\ell=p$. Two known properties of $\rho_{E,p^{\infty}}$ are that $\det \rho_{E,p^{\infty}}=\chi_p$, the cyclotomic character, and that its Hodge--Tate weights are $(0,1)$, that is, $\dim_{K} \Fil^{-1}(D_{cris,K}(V_p(E)))=2$, $\dim_{K} \Fil^{0}(D_{cris,K}(V_p(E)))=1$ and $\dim_{K} \Fil^1(D_{cris,K}(V_p(E)))=0$.

\subsection{Contravariant convention}
In practice, instead of using the functors $D_{cris,K}$ and $D_{pcris}$, we
use their duals $D_{cris,K}^{*}$ and $D_{pcris}^{*}$, defined by

$$ D_{cris,K}^{*}(V)\simeq (D_{cris,K}(V))^{\vee}\simeq D_{cris,K}(V^{\vee}) \simeq Hom_{\Q_p[G_{K}]}(V,B_{cris}).
$$

$$ D_{pcris}^{*}(V)=\dirlim{[K:\Q_p]<\infty}D_{cris,K}^{*}(V).$$

For a potentially crystalline $G_{\Q_p}$-representation $V$ that becomes crystalline over $K$, we have 

$$D_{pcris}^{*}(V)=\Q_p^{un}\otimes D_{cris,K}^{*}(V)\simeq (\Q_p^{un}\otimes D_{cris,K}(V))^{\vee}\simeq D_{pcris}(V)^{\vee}$$

This choice of convention is motivated by the fact that most results in $p$-adic Hodge theory are stated in
terms of $D_{cris,K}(H_{et}^1(E,\Q_p))$, which is isomorphic to
$D_{cris,K}(V_p(E)^{\vee})\simeq D_{cris,K}^*(V_p(E))$. For instance, the
crystalline--étale comparison isomorphism states that
$D_{cris,K}(H_{et}^{1}(E,\Q_p))\simeq H_{cris,K}^1(\tilde{E}/K_0)$, where $\tilde{E}$ is the special fiber of $E$. Katz and Messing showed in~\cite[Theorem 1]{KatzMessing1974} that, for an elliptic curve $E/\F_{p^s}$, the characteristic polynomial of the $p^s$-th power Frobenius acting on $H_{et}^1$ is the same as the characteristic
polynomial of the $s$-th power of the crystalline Frobenius, $\varphi^s$, acting on $H_{cris}$. Thus,
if the elliptic curve $E$ attains good reduction over a totally ramified
extension $L$, since $$D_{cris,L}^*(V_p(E))\simeq
    D_{cris,L}(H_{et}^1(E,\Q_p))\simeq H^1_{cris,L}(\tilde{E}/\Q_p)$$, we know that the
characteristic polynomial of $\varphi$ acting on $D_{cris,L}^*(V_p(E))\simeq
    D_{cris,K}^*(V_p(E))^{\Gal(K/L)}$ is $X^2-a_pX+p$. The use of $D_{pcris}^*$
changes the filtration so that now Hodge--Tate weights $(0,1)$ correspond to
$\dim_{K} \Fil^{0}(D_{cris,K}^{*}(V_p(E)))=2$, $\dim_{K}
    \Fil^{1}(D_{cris,K}^{*}(V_p(E)))=1$ and $\dim_{K} \Fil^2(D_{cris,K}^{*}(V_p(E)))=0$.

On the other hand, using $D_{pcris}^*$ instead of $D_{pcris}$ affects
the compatibility of the Weil--Deligne representations. It is not true that
$\rho_{WD}(D_{cris,K}^*(V_p(E)))$ is isomorphic to the Weil--Deligne
representations for $\ell\neq p$. However, since $V_p(E)^{\vee}\simeq V_p(E)\otimes\Q_p(-1)$, we have
$\rho_{WD}(D_{cris,K}^*(V_p(E)))\otimes\rho_{WD}(\Q_p\{1\}) \simeq WD_{\ell}$.
We will only use the compatibility of the Weil--Deligne representations for
different $\ell$ in order to deduce that $\rho_{WD}(D_{pcris}^{*}(V_p(E)))$ must
have traces in $\Q$ and to deduce properties of the action of inertia which are
not affected by the cyclotomic twist. Finally, recall
that subobjects in $MF^{\varphi,w.a.}_{K}$ transform under $D_{cris,K}^{*}$
into quotients and vice versa.

\section{Setup for $\Q_2$}

In this section, we establish the preliminary lemmas needed for $\Q_2$. We first need a list of Galois extensions $K/\Q_2$ such that any elliptic curve $E/\Q_2$ with potentially good reduction attains it over a field $K$ in the list. Such a list is provided by the inertia types of elliptic curves over $\Q_2$ described in~\cite{dembélé2024galoisinertialtypeselliptic}. We then define a class of $\upgmt$s with properties modeled on those arising from elliptic curves and prove several lemmas about them.

Let $K/\Q_2$ be a Galois extension over which $E$ attains good reduction, minimal with this property. Such an extension need not be unique. Suppose that $D_{pcris}^{*}(V_2(E))$ is represented as a $\fpgmt$. If $F$ is another Galois extension over which $E$ attains good reduction, the construction in \S\ref{S:Background} gives the following description of $D_{pcris}^{*}(V_2(E))$ as a filtered $(\varphi,\Gal(F/\Q_2))$-module:
$D^{*}_{cris,F}(V_2(E))=[D^{*}_{cris,K}(V_2(E))\otimes_{K_0}(FK)_0]^{\Gal(FK/F)}$. Both $D^{*}_{cris,K}(V_2(E))$ and $D^{*}_{cris,F}(V_2(E))$ represent the same filtered $(\varphi,\Gal(\barra{\Q}_2/\Q_2))$-module and therefore correspond to the same potentially crystalline representation. Thus, it suffices to compute a $\fpgmt$ for $K$ equal to the Galois closure of each of the fields $L$ in Table~\ref{Ta: InTypes}.

\begin{lemma}\label{L: InTypes}
    Let $E/\Q_2$ be an elliptic curve with potentially good reduction and inertia type $\tau_E$. Then $\tau_E$ is isomorphic to one of the inertia types $\tau$ in Table~\ref{Ta: InTypes}, with semistability defect $e$ and conductor exponent $\nu_2(N_E)$. Moreover, $E$ attains good reduction over the corresponding extension $L/\Q_2$ whose Galois closure $K/\Q_2$ has inertia $I:=I_{K/\Q_2}$ and Galois group $G:=\Gal(K/\Q_2)$ as described in Table~\ref{Ta: InTypes}.

  \begin{table}[htbp]
    \centering
\begin{tabular}{c|c|c|c|c|c|c}
    $\tau$ & $e$ & $v_2(N_E)$ & $L$
    & Defining polynomial & $I$ & $G$ \\
    \hline\hline

    trivial & $1$ & $0$ & $\Q_2$
    & \href{https://www.lmfdb.org/padicField/2.1.0.1}{$x$}
    & $1$ & $1$ \\
    \hline

    $\varepsilon_{-1}$ & \multirow{3}{*}{$2$} & $4$ & $\Q_2(\sqrt{-1})$
    & \href{https://www.lmfdb.org/padicField/2.2.2.1}{$x^2+1$}
    & \multirow{3}{*}{$C_2$} & \multirow{3}{*}{$C_2$} \\
    \cline{1-1}\cline{3-5}

    $\varepsilon_2$ &  & \multirow{2}{*}{$6$} & $\Q_2(\sqrt{2})$
    & \href{https://www.lmfdb.org/padicField/2.2.3.1}{$x^2-2$}
    &  &  \\
    \cline{1-1}\cline{4-5}

    $\varepsilon_{-2}$ &  &  & $\Q_2(\sqrt{-2})$
    & \href{https://www.lmfdb.org/padicField/2.2.3.3}{$x^2+2$}
    &  &  \\
    \hline

    $\tau_{sc,2}(5,1,3)$ & $3$ & $2$ & $\Q_2(\sqrt[3]{2})$
    & \href{https://www.lmfdb.org/padicField/2.3.2.1}{$x^3+2$}
    & $C_3$ & $D_3$ \\
    \hline

    $\tau_{sc,2}(5,4,4)$ & \multirow{4}{*}{$4$} & \multirow{4}{*}{$8$} & $L^{na}_{4,1}$
    & \href{https://www.lmfdb.org/padicField/2.4.11.18}{$x^4+8x+18$}
    & \multirow{4}{*}{$C_4$} & \multirow{2}{*}{$D_4$} \\
    \cline{1-1}\cline{4-5}

    $\tau_{sc,2}(5,4,4)\otimes\varepsilon_{-4}$ &  &  & $L^{na}_{4,2}$
    & \href{https://www.lmfdb.org/padicField/2.4.11.17}{$x^4+4x^2+6$}
    &  &  \\
    \cline{1-1}\cline{4-5}\cline{7-7}

    $\tau_{ps,2}(1,4,4)$ &  &  & $L^{ab}_{4,1}$
    & \href{https://www.lmfdb.org/padicField/2.4.11.1}{$x^4-4x^2+2$}
    &  & \multirow{2}{*}{$C_4$} \\
    \cline{1-1}\cline{4-5}

    $\tau_{ps,2}(1,4,4)\otimes\varepsilon_{-4}$ &  &  & $L^{ab}_{4,2}$
    & \href{https://www.lmfdb.org/padicField/2.4.11.2}{$x^4+4x^2+2$}
    &  &  \\
    \hline

    $\tau_{sc,2}(5,1,3)\otimes\varepsilon_{-4}$ & \multirow{3}{*}{$6$} & $4$ & $L_{6,1}$
    & \href{https://www.lmfdb.org/padicField/2.6.8.1}{$x^6+2x^3+2$}
    & \multirow{3}{*}{$C_6$} & \multirow{3}{*}{$D_6$} \\
    \cline{1-1}\cline{3-5}

    $\tau_{sc,2}(5,1,3)\otimes\varepsilon_8$ &  & \multirow{2}{*}{$6$} & $L_{6,2}$
    & \href{https://www.lmfdb.org/padicField/2.6.11.1}{$x^6+14$}
    &  &  \\
    \cline{1-1}\cline{4-5}

    $\tau_{sc,2}(5,1,3)\otimes\varepsilon_{-8}$ &  &  & $L_{6,3}$
    & \href{https://www.lmfdb.org/padicField/2.6.11.9}{$x^6+2$}
    &  &  \\
    \hline

    $\tau_{sc,2}(-20,3,4)$ & \multirow{6}{*}{$8$} & \multirow{2}{*}{$5$} & $L_{8,1}$
    & \href{https://www.lmfdb.org/padicField/2.8.16.65}{$x^8+2x^6+4x^4+4x^3+4x+2$}
    & \multirow{6}{*}{$Q_8$} & \multirow{6}{*}{$\SD_{16}$} \\
    \cline{1-1}\cline{4-5}

    $\tau_{sc,2}(-4,3,4)$ &  &  & $L_{8,2}$
    & \href{https://www.lmfdb.org/padicField/2.8.16.66}{$x^8+6x^4-3$}
    &  &  \\
    \cline{1-1}\cline{3-5}

    $\tau_{sc,2}(-20,3,4)\otimes\varepsilon_8$ &  & \multirow{2}{*}{$6$} & $L_{8,3}$
    & \href{https://www.lmfdb.org/padicField/2.8.18.74}{$x^8-84x^4+392x^2-588$}
    &  &  \\
    \cline{1-1}\cline{4-5}

    $\tau_{sc,2}(-4,3,4)\otimes\varepsilon_8$ &  &  & $L_{8,4}$
    & \href{https://www.lmfdb.org/padicField/2.8.18.73}{$x^8-6x^4-3$}
    &  &  \\
    \cline{1-1}\cline{3-5}

    $\tau_{sc,2}(-4,6,4)\otimes\varepsilon_8$ &  & \multirow{2}{*}{$8$} & $L_{8,5}$
    & \href{https://www.lmfdb.org/padicField/2.8.24.66}{$x^8-12x^4-12$}
    &  &  \\
    \cline{1-1}\cline{4-5}

    $\tau_{sc,2}(-4,6,4)$ &  &  & $L_{8,6}$
    & \href{https://www.lmfdb.org/padicField/2.8.24.68}{$x^8+12x^4-12$}
    &  &  \\
    \hline

    $\tau_{ex,2,1}$ & \multirow{8}{*}{$24$} & $3$ & $L_{24,1}$
    & $x^{24}+2x^9+2x^6+2$
    & \multirow{8}{*}{\footnotesize $\SL_2(\F_3)$} & \multirow{8}{*}{\footnotesize $\GL_2(\F_3)$} \\
    \cline{1-1}\cline{3-5}

    $\tau_{ex,2,1}\otimes\varepsilon_{-4}$ &  & $4$ & $L_{24,2}$
    & $x^{24}+2x^{15}+2x^6+2$
    &  &  \\
    \cline{1-1}\cline{3-5}

    $\tau_{ex,2,1}\otimes\varepsilon_8$ &  & \multirow{2}{*}{$6$} & $L_{24,3}$
    & $x^{24}+4x^{15}+2x^6+4x^3+2$
    &  &  \\
    \cline{1-1}\cline{4-5}

    $\tau_{ex,2,1}\otimes\varepsilon_{-8}$ &  &  & $L_{24,4}$
    & $x^{24}+4x^{21}+2x^6+4x^3+2$
    &  &  \\
    \cline{1-1}\cline{3-5}

    $\tau_{ex,2,2}$ &  & \multirow{4}{*}{$7$} & $L_{24,5}$
    & $x^{24}+4x^{21}+4x^6+8x^3+2$
    &  &  \\
    \cline{1-1}\cline{4-5}

    $\tau_{ex,2,2}\otimes\varepsilon_{-4}$ &  &  & $L_{24,6}$
    & $x^{24}+4x^{21}+8x^9+4x^6+8x^3+2$
    &  &  \\
    \cline{1-1}\cline{4-5}

    $\tau_{ex,2,2}\otimes\varepsilon_8$ &  &  & $L_{24,7}$
    &  $x^{24}+4x^{21}+4x^6+2$
    &  &  \\
    \cline{1-1}\cline{4-5}

    $\tau_{ex,2,2}\otimes\varepsilon_{-8}$ &  &  & $L_{24,8}$
    &  $x^{24}+4x^{21}+8x^9+4x^6+2$
    &  &  \\
\end{tabular}
    \caption{Inertia types of elliptic curves over $\Q_2$.}
    \label{Ta: InTypes}
\end{table}\end{lemma}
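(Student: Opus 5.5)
The plan is to deduce the lemma almost entirely from the classification of inertial types of elliptic curves over $\Q_2$ in~\cite{dembélé2024galoisinertialtypeselliptic}, and then verify the field-theoretic data.

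\textbf{Step 1: the list of types.} Since $E$ has potentially good reduction, $N=0$ and $\tau_E=\rho_{WD}|_{I_{\Q_2}}$ has finite image of order $e$. By~\cite{dembélé2024galoisinertialtypeselliptic}, $\tau_E$ is isomorphic to one of the types listed in the first column of Table~\ref{Ta: InTypes}: trivial, quadratic twists by $\varepsilon_{-1},\varepsilon_{\pm2}$, supercuspidal and principal series types of conductors $2^2,\dots,2^8$, and the exceptional types $\tau_{ex,2,i}$ and their twists. The same reference gives the conductor exponents. It also gives $e=|\tau_E(I_{\Q_2})|$, which equals $1,2,3,4,6,8,24$ with image $1,C_2,C_3,C_4,C_6,Q_8,\SL_2(\F_3)$ respectively. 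I will take these facts as input rather than reprove them; they rest on Kraus's classification of $e$ in terms of $v_2(c_4),v_2(c_6),v_2(\Delta)$ and on the $\ell$-adic Weil--Deligne analysis.

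\textbf{Step 2: the field of good reduction.} By Néron--Ogg--Shafarevich, $E$ acquires good reduction over a finite extension $F$ if and only if $\tau_E|_{I_F}$ is trivial. So it suffices to show that for each type $\tau$ the field $L$ of Table~\ref{Ta: InTypes} satisfies $I_L\subseteq\ker\tau$. For $K$ the Galois closure of $L$, this reduces to showing $\ker\tau = I_{K'}$ for the field $K'$ cut out by $\tau$ on inertia, and that $K'\subseteq K^{un}$, i.e.\ $KQ_2^{un}$ is the fixed field of $\ker(\tau)$ in $\overline{\Q}_2$. Concretely, I would check this case by case:
\begin{itemize}
\item For $e=2$, the quadratic characters $\varepsilon_d$ trivially die on $\Q_2(\sqrt d)$.
\item For $e=3$ and $e=6$, the type is a twist of the tame-by-quadratic type $\tau_{sc,2}(5,1,3)$. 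Its kernel field is $\Q_2(\sqrt[3]{2})$ up to unramified extension, and a twist by $\varepsilon_{d}$ with $d \in \{-4,8,-8\}$ moves it to a sextic totally ramified field. One identifies which sextic by matching discriminant exponent to conductor (LMFDB labels 2.6.8.1, 2.6.11.x).
\item For $e=4,8,24$, the procedure is the same. I would determine the ramification filtration and discriminant of the inertial field from the conductor via the conductor–discriminant relation $v_2(N_E)=\sum_i \frac{|G_i|}{|G_0|}\operatorname{codim}$, and pick the unique candidate in the LMFDB database (degrees 4, 8) with matching Galois group, inertia and discriminant.
\end{itemize}
Alternatively, one could exhibit for each type an explicit curve $E$ (from~\cite{dembélé2024galoisinertialtypeselliptic}) and check computationally that $E/L$ has good reduction. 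Since the type determines $\ker\tau$, one curve per type suffices.

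\textbf{Step 3: the group data.} $G=\Gal(K/\Q_2)$ and $I$ are read off from the defining polynomials, which is a finite Galois-group computation, e.g.\ via LMFDB for degrees $\le 8$ and Magma/Pari for the degree-$24$ polynomials. I then check that $I\simeq\tau(I_{\Q_2})$ and that $G$ contains it with cyclic quotient: $D_3,D_4,C_4,D_6,\SD_{16},\GL_2(\F_3)$.

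The main obstacle is the $e=24$ case. There the degree-$24$ fields are not in standard databases, and one must certify that the given polynomial's splitting field has inertial kernel exactly $\ker\tau_{ex,2,i}\otimes\varepsilon$. I would handle this by taking a representative curve for each of the eight types, computing over $L_{24,i}$ (e.g.\ via Tate's algorithm in Magma) that it has good reduction, and computing the Galois closure's group as $\GL_2(\F_3)$ with inertia $\SL_2(\F_3)$. Minimality is then automatic, since $[L:\Q_2]=e=|I|$ and $L$ is totally ramified.
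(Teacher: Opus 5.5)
Your proposal is correct and follows the paper's route. Like the paper, you take everything except the degree-$24$ polynomials from the cited classification of inertial types, and you certify those polynomials by checking that representative curves with the exceptional types acquire good reduction over $L_{24,i}$; minimality is automatic since $[L:\Q_2]=e$.

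One caution: your discriminant-matching shortcut for $e=4,8$ would not by itself identify the field. For example, $L_{8,3}$ and $L_{8,4}$, or $L^{na}_{4,1}$ and $L^{na}_{4,2}$, share conductor, discriminant exponent and Galois group. So rely on the explicit-curve check you list as the alternative, or on the reference itself.
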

\begin{proof}
    All assertions except those concerning the polynomials for the exceptional types follow from the main theorem of \cite{dembélé2024galoisinertialtypeselliptic}. These polynomials were computed using the Magma package in~\cite{CastroMorenoFloritFreitas2026}. Their correctness can be checked by verifying that the curves $E$ in~\cite{dembélé2024galoisinertialtypeselliptic} with exceptional types attain good reduction over the field defined by the corresponding polynomial; see~\cite{MAGMAFiles}.
\end{proof}

\begin{defn}
    Given $E/\Q_2$ with potentially good reduction, let $L$ in Table~\ref{Ta: InTypes} be field corresponding to $\tau_E$, the inertia type of $E$, then $E$ has good reduction over $L$ and $L$ is minimal with this property. In this case, we say that $E$ has the inertia type associated to $L$. 
\end{defn}

\begin{lemma}\label{L: t exists}
    Let $L$ be a totally ramified extension and let $K$ be its Galois closure, sssume moreover that $K=L\cdot K_0$. If $[K_0:\Q_2]=f$, then there exists an element $t\in\Gal(K/\Q_2)$ such that $\Gal(K/L)=\langle t\rangle$ and $t\vert_{K_0}=\sigma$. Moreover, $\Gal(K/\Q_2)=I_{K/\Q_2}\rtimes\langle t\rangle$.
\end{lemma}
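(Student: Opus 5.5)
The plan is to use Galois correspondence and the structure of the restriction map to $K_0$. Set $G=\Gal(K/\Q_2)$, $I=I_{K/\Q_2}=\Gal(K/K_0)$ and $H=\Gal(K/L)$. Since $L/\Q_2$ is totally ramified, $L\cap K_0=\Q_2$, because any subfield of $L$ unramified over $\Q_2$ is trivial. Together with the hypothesis $K=L\cdot K_0$ and the fact that $K_0/\Q_2$ is Galois, standard Galois theory gives that restriction
$$\Gal(K/L)\longrightarrow \Gal(K_0/L\cap K_0)=\Gal(K_0/\Q_2)$$
is an isomorphism. In group-theoretic terms this says $H\cap I=\Gal(K/LK_0)=1$ and $HI=\Gal(K/L\cap K_0)=G$.

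Next I would use that $\Gal(K_0/\Q_2)=\langle\sigma\rangle$ is cyclic of order $f$. Since the restriction map $H\to\Gal(K_0/\Q_2)$ is an isomorphism, $H$ is cyclic of order $f$. Let $t\in H$ be the unique preimage of $\sigma$. Then $H=\langle t\rangle$ and $t\vert_{K_0}=\sigma$, which gives the first claim.

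For the semidirect product decomposition, I would note that $I=\ker(G\to\Gal(K_0/\Q_2))$ is normal in $G$. From the previous step, $I\cap\langle t\rangle=1$ and $I\langle t\rangle=G$. Alternatively, compare orders: $|I|=[K:K_0]$ and $|\langle t\rangle|=f$, so their product is $[K:\Q_2]$. Hence $G=I\rtimes\langle t\rangle$.

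There is no real obstacle here. The one point that needs care is justifying $L\cap K_0=\Q_2$ from total ramification of $L$, together with invoking the hypothesis $K=LK_0$ so that the restriction map is surjective as well as injective. Without $K=LK_0$, one would only get that $H$ embeds into $\Gal(K_0/\Q_2)$.
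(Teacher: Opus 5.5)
Your proof is correct and follows essentially the same route as the paper. Both arguments use $K=L\cdot K_0$ and $L\cap K_0=\Q_2$ to see that restriction $\Gal(K/L)\to\Gal(K_0/\Q_2)$ is an isomorphism, take $t$ to be the preimage of $\sigma$, and deduce the semidirect product from $I_{K/\Q_2}\cap\langle t\rangle=1$, $I_{K/\Q_2}\langle t\rangle=\Gal(K/\Q_2)$ and the normality of inertia; your justification of $L\cap K_0=\Q_2$ from total ramification spells out a point the paper leaves implicit.
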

\begin{proof}
 Since $K=L\cdot K_0$ and $K_0\cap L=\Q_2$, the restriction map $\Gal(K/L)\rightarrow \Gal(K_0/\Q_2)$ is an isomorphism. Let $t$ be the preimage of $\sigma$. Then $t\vert_{K_0}=\sigma$ and $\langle t \rangle=\Gal(K/L)$. Moreover, $\Gal(K/\Q_2)=I_{K/\Q_2}\cdot \langle t\rangle$ and $I_{K/\Q_2}\cap\langle t\rangle=1$. Since $I_{K/\Q_2}$ is normal, this implies $\Gal(K/\Q_2)=I_{K/\Q_2}\rtimes\langle t\rangle$.
\end{proof}

\begin{remark}
    All the fields $L$ in Table~\ref{Ta: InTypes} satisfy the condition of Lemma~\ref{L: t exists} with either $f=1$ or $f=2$.
\end{remark}

Let $D$ be an $\upgmt$, let $f$ be a $g$-semilinear endomorphism and let $e_1,e_2$ be a basis. For $v=c_1e_1+c_2e_2$, we have $f(v)=g(c_1)f(e_1)+g(c_2)f(e_2)$. Thus, the matrix $M(f)$ whose columns are the coordinates of $f(e_1),f(e_2)$ determines the image of any vector. Indeed, the coordinates of $f(v)$ are given by $M(f)g(v)$. After fixing a basis, the set of $\Gal(K_0/\Q_2)$-semilinear maps of $D$ is therefore isomorphic to $M_2(K_0)\rtimes\Gal(K_0/\Q_2)$, where the composition of the $g_i$-semilinear maps $f_i$ is given by 
$$(M(f_1),g_1)\cdot (M(f_2),g_2)=(M(f_1)g_1(M(f_2)),g_1\cdot g_2)$$ 

For instance, once we fix a basis, the condition that the $\sigma$-semilinear map $\varphi$ commutes with the action of $g\in \Gal(K/\Q_2)$ reads $$M(\varphi)\sigma(M(g))=M(g)\overline{g}(M(\varphi))$$

Here $\overline{g}$ is the class of $g\in \Gal(K_0/\Q_2)$. It is important to distinguish semilinear maps from their matrices. For example, below we encounter a $\sigma$-semilinear map $t$ whose matrix is $M(t)=\Id$. This does not mean that $t=\Id$, only that $t(e_i)=e_i$ for the basis vectors; for instance, $t(\zeta_3e_1)=\zeta_3^2 e_1$. We will use this distinction throughout the arguments involving both maps and matrices.

The following definition captures the properties of an $\upgmt$ arising from an elliptic curve. We prove our classification result for modules satisfying this condition so that we can apply it to the $\upgmt$s arising from Galois pairs and show that they agree with those arising from elliptic curves.

\begin{defn}
    Let $L/\Q_2$ be a totally ramified extension and $K$ its Galois closure. Assume that $K=L\cdot K_0$, and let $t$ be as in Lemma~\ref{L: t exists}. Let $D$ be an $\upgmt$ and $D_0=D^{\langle t \rangle}$. We say that $D$ satisfies condition ${\bf(A)}$ for $K/L$ if\begin{itemize}
    \item $\dim_{K_0}D=2$ 
    \item $I_{K/\Q_2}$ acts faithfully.
    \item For all $g\in I_{K/\Q_2}$, the linear operator $g$ satisfies $\det g=1$. 
    \item The operator $\varphi_{0}:=\varphi\vert_{D_0}$ has characteristic polynomial $X^2-a_2X+2$ with $a_2^2-8\neq 0$
    \end{itemize} 
    If, moreover, $[K_0:\Q_2]={\bf r}$, then we say that $D$ satisfies condition ${\bf (A.r)}$ for $K/L$.
\end{defn}

We now give several lemmas about the fields in Table~\ref{Ta: InTypes} and the corresponding $\upgmt$s satisfying condition ${\bf(A)}$.

\begin{lemma}\label{L: eigenvalues}
    If $D$ is an $\upgmt$ that satisfies ${\bf (A)}$ and $g\in I_{K/\Q_2}$ has order $m$, then $\operatorname{Tr}(g)=\zeta_m+\zeta_m^{-1}$, where $\zeta_m$ is some primitive root of unity. 
\end{lemma}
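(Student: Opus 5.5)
For $g\in I_{K/\Q_2}$, the restriction $g\vert_{K_0}$ is trivial, because $I_{K/\Q_2}$ fixes $K_0$. The action of $g$ on $D$ is therefore $K_0$-linear, and I will work with its matrix $M(g)\in\GL_2(K_0)$. Since $\Gal(K/\Q_2)$ acts by a group action, $g^m=1$ gives $M(g)^m=\Id$. So $M(g)$ is annihilated by $X^m-1$, which is separable in characteristic $0$. Hence $M(g)$ is diagonalizable over $\overline{\Q}_2$, with eigenvalues $\lambda_1,\lambda_2$ that are $m$-th roots of unity.

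Condition ${\bf (A)}$ gives $\det g=1$, so $\lambda_2=\lambda_1^{-1}$ and $\Tr(g)=\lambda_1+\lambda_1^{-1}$. It remains to show that $\lambda_1$ is a \emph{primitive} $m$-th root of unity.

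Let $d$ be the order of $\lambda_1$, so $d\mid m$. Then $\lambda_2=\lambda_1^{-1}$ also has order $d$. Since $M(g)$ is diagonalizable with both eigenvalues satisfying $\lambda^d=1$, we get $M(g)^d=\Id$, so $g^d$ acts trivially on $D$. Now $g^d\in I_{K/\Q_2}$, and by faithfulness of the inertia action (condition ${\bf (A)}$) this forces $g^d=1$. Since $g$ has order exactly $m$, we get $m\mid d$, hence $d=m$ and $\zeta_m:=\lambda_1$ is a primitive $m$-th root of unity.

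The only step needing care is the diagonalizability of $M(g)$. Without it, $M(g)^d$ could be a nontrivial unipotent matrix and the faithfulness argument would fail. Finite order together with characteristic zero settles this.
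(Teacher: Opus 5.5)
Your proof is correct and follows the paper's argument: separability of $X^m-1$ gives diagonalizability, $\det g=1$ pairs the eigenvalues as $\lambda,\lambda^{-1}$, and faithfulness of the inertia action forces $\lambda$ to be primitive. You are slightly more explicit than the paper, in that you note $g$ acts $K_0$-linearly because it is trivial on $K_0$, and that diagonalizability is what makes $g^d$ act trivially.
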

\begin{proof}
    Since $g^m=1$, the minimal polynomial of $g$ divides $X^m-1$. It is therefore separable, so $g$ is diagonalizable over some extension of $K_0$. Since $\det g=1$, the eigenvalues are $\lambda,\lambda^{-1}$, where $\lambda$ is an $m$-th root of unity. If $\lambda$ were not a primitive root of unity, the action of $g^n$ would be trivial for some $n<m$, contradicting the faithfulness of the action of $I_{K/\Q_2}$.  
\end{proof}
\begin{lemma}\label{L: adapted basis}
    If $D$ is an $\upgmt$ that satisfies ${\bf (A)}$ for $K/L$, then there exists a $\Q_2$-basis of $D_0$ that is a $K_0$-basis of $D$ in which $M(\varphi)=\tbt{0}{-2}{1}{a_2}$ and $M(t)=\tbt{1}{0}{0}{1}$.
\end{lemma}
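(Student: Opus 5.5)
The plan is to reduce to two facts: Galois descent for $D_0$, and a cyclic vector for $\varphi_0$.

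\emph{Descent.} Since $t|_{K_0}=\sigma$ generates $\Gal(K_0/\Q_2)$, and $\langle t\rangle$ acts $\sigma$-semilinearly on the $K_0$-space $D$, we first check that $t^f$ acts trivially on $D$. Here $t^f\in\Gal(K/L)$ restricts trivially to $K_0$, and by Lemma~\ref{L: t exists} $\langle t\rangle\simeq\Gal(K_0/\Q_2)$ has order $f$, so $t^f=1$ in $\Gal(K/\Q_2)$. Hence $\langle t\rangle$ acts on $D$ through a semilinear action of $\Gal(K_0/\Q_2)$. Hilbert 90 (Speiser's lemma) then shows that the natural map $D_0\otimes_{\Q_2}K_0\to D$ is an isomorphism, with $D_0=D^{\langle t\rangle}$. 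In particular $\dim_{\Q_2}D_0=2$, and any $\Q_2$-basis of $D_0$ is a $K_0$-basis of $D$. In any such basis every vector is $t$-fixed, so $M(t)=\Id$.

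\emph{Cyclic vector.} Because $\varphi$ commutes with $t$, it preserves $D_0$. Since $\sigma$ is trivial on $\Q_2$, the restriction $\varphi_0$ is a $\Q_2$-linear bijection of $D_0$, and by condition ${\bf (A)}$ its characteristic polynomial is $X^2-a_2X+2$. Choose any $v\in D_0$ that is not an eigenvector of $\varphi_0$. Such a $v$ exists because $\varphi_0$ is not a scalar: a scalar $\lambda$ would force the characteristic polynomial to be $(X-\lambda)^2$, i.e.\ $a_2^2-8=0$, which is excluded by ${\bf (A)}$. Then $e_1=v$ and $e_2=\varphi_0(v)$ form a $\Q_2$-basis of $D_0$. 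By Cayley--Hamilton, $\varphi_0(e_2)=-2e_1+a_2e_2$. This gives $M(\varphi)=\tbt{0}{-2}{1}{a_2}$ on the basis $e_1,e_2$, which is also a $K_0$-basis of $D$ with $M(t)=\Id$.

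The main obstacle is the descent step, specifically confirming that $t^f=1$ so that Hilbert 90 applies to a genuine $\Gal(K_0/\Q_2)$-semilinear action. This follows from $\Gal(K/L)=\langle t\rangle$ mapping isomorphically onto $\Gal(K_0/\Q_2)$. When $f=1$ the statement is trivial, since then $t=1$, $K_0=\Q_2$ and $D_0=D$.
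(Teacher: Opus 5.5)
Your proof is correct and follows the same route as the paper: take $e_1\in D_0$ that is not an eigenvector, which exists because $a_2^2-8\neq 0$ makes $\varphi_0$ non-scalar, use the basis $(e_1,\varphi_0(e_1))$, and read off $M(t)=\Id$ because the basis vectors lie in $D^{\langle t\rangle}$. The only difference is that you justify $D\simeq D_0\otimes_{\Q_2}K_0$ by checking $t^f=1$ and invoking Speiser's lemma, whereas the paper simply calls this step clear.
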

\begin{proof}
    By the condition on the characteristic polynomial, $\varphi_0$ is not scalar, thus, there exist a vector $e_1$ of $D_0$ for which $(e_1,\varphi_0(e_1))$ is a basis. In this basis the expression of $M(\varphi)$ is the desired one. The expression for $M(t)$ follows from the fact that $e_1,\varphi(e_1)\in D^{\langle t\rangle}$. Since $D=D_0\otimes K_0$ it is clear that these element form a basis of $D$.  
\end{proof}
\begin{lemma}\label{L: a_2=0}
    If $D$ is a $\upgmt$ that satisfies ${\bf (A.2)}$ for $K/L$, then $a_2=0$. 
\end{lemma}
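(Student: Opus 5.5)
The plan is to work in the adapted basis of Lemma~\ref{L: adapted basis}, where $M(\varphi)=\tbt{0}{-2}{1}{a_2}$ has rational entries and $M(t)=\Id$. I will show that the commutation of $\varphi$ with the Galois action forces the $K_0$-linear matrix $M(\varphi)$ to conjugate some inertia element $g$ of order $m\geq 3$ to its inverse. This will force $\Tr M(\varphi)=a_2=0$.

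\emph{Step 1: translating the relations into matrices.} Since $f=2$, every $g\in I_{K/\Q_2}$ acts $K_0$-linearly, i.e.\ $\overline{g}=\Id$ on $K_0$. The relation $\varphi g=g\varphi$ therefore reads
$$M(\varphi)\sigma(M(g))=M(g)M(\varphi).$$
Since $t$ is $\sigma$-semilinear with $M(t)=\Id$, the composition rule for semilinear maps gives $M(tgt^{-1})=\sigma(M(g))$. Combining the two, for every $g\in I_{K/\Q_2}$ we get
$$M(\varphi)\,M(tgt^{-1})\,M(\varphi)^{-1}=M(g),$$
using that $M(\varphi)$ has entries in $\Q_2$. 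So if some $g\in I_{K/\Q_2}$ satisfies $tgt^{-1}=g^{-1}$, then $M(\varphi)M(g)^{-1}M(\varphi)^{-1}=M(g)$.

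\emph{Step 2: the group theory (the main obstacle).} I need to find, for each field in Table~\ref{Ta: InTypes} with $f=2$, an element $g\in I_{K/\Q_2}$ of order $m\geq3$ with $tgt^{-1}=g^{-1}$. Since $f=2$, the element $t$ has order $2$ and lies outside inertia. The nontrivial inertia groups with $f=2$ occur in the cases $e=3,4^{na},6,8,24$, where $G=D_3,D_4,D_6,\SD_{16},\GL_2(\F_3)$ respectively.
\begin{itemize}
\item For the dihedral cases, $I$ is the cyclic rotation subgroup $C_3,C_4,C_6$ and $t$ is a reflection, so a generator $g$ of $I$ works.
\item For $\SD_{16}=\langle a,t\mid a^8=t^2=1,\ tat=a^3\rangle$ with $I=Q_8=\langle a^2,at\rangle$, we have $ta^2t=a^6=a^{-2}$, so $g=a^2$ works.
\item For $\GL_2(\F_3)$, with $I=\SL_2(\F_3)$, the element $t$ lies in some Sylow $2$-subgroup $\SD_{16}$ of $G$. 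There it plays the role of the noncentral involution outside $Q_8$, so the same element of order $4$ works.
\end{itemize}
One must check that the $t$ of Lemma~\ref{L: t exists} really is such a reflection/involution; this follows from $\langle t\rangle=\Gal(K/L)$ being non-normal of order $2$ complementing $I$. If needed, I would verify it case by case with the explicit fields.

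\emph{Step 3: conclusion.} By Lemma~\ref{L: eigenvalues}, $M(g)$ has eigenvalues $\lambda,\lambda^{-1}$ with $\lambda$ a primitive $m$-th root of unity. Since $m\geq3$, we have $\lambda\neq\lambda^{-1}$, so over $K_0(\lambda)$ there is an eigenbasis $v,w$. The relation $M(\varphi)M(g)^{-1}M(\varphi)^{-1}=M(g)$ shows that $M(\varphi)$ sends the $\lambda$-eigenline of $M(g)$ to the $\lambda^{-1}$-eigenline of $M(g)$, and vice versa. Hence in the basis $(v,w)$ the matrix $M(\varphi)$ is antidiagonal, so its trace vanishes. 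As the trace is basis-independent, $a_2=\Tr M(\varphi)=0$.
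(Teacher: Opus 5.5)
Your proof is correct, but it takes a different route from the paper's.

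\textbf{The paper's argument.} It works by contradiction and needs no case analysis. Since $f=2$, $\varphi^2$ is $K_0$-linear with matrix $M(\varphi)^2$ and commutes with every $g\in I_{K/\Q_2}$. If $a_2\neq 0$, the relation $\varphi_0^2-a_2\varphi_0+2=0$ gives $M(\varphi)=(M(\varphi)^2+2)/a_2$, so $M(g)$ commutes with $M(\varphi)$. Comparing with $M(\varphi)\sigma(M(g))=M(g)M(\varphi)$ yields $\sigma(M(g))=M(g)$, i.e.\ $t$ and $g$ commute as semilinear maps. Faithfulness then shows that $t$ centralizes $I_{K/\Q_2}$. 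Hence $\langle t\rangle$ is normal and $L$ would be Galois, a contradiction.

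\textbf{Your argument.} You use the same commutation relation, rewritten as $M(\varphi)M(tgt^{-1})M(\varphi)^{-1}=M(g)$, in the positive direction. Once $t$ inverts some $g$ of order $m\geq 3$, $\varphi$ must interchange the two distinct eigenlines of $g$, which forces $\Tr M(\varphi)=a_2=0$. Your group-theoretic step is sound, including the Sylow argument for $\GL_2(\F_3)$. There $t$ lies in an $\SD_{16}$ whose intersection with $\SL_2(\F_3)$ is its unique $Q_8$, and every involution of $\SD_{16}$ outside $Q_8$ inverts $a^2$.

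\textbf{Comparison.} That group-theoretic step is the cost of your route. It requires checking the five Galois groups one at a time, so you only prove the lemma for the fields of Table~\ref{Ta: InTypes}. This suffices for every use of the lemma in the paper and matches the paper's framing (``lemmas about the fields in Table~\ref{Ta: InTypes}''). It is, however, narrower than the statement read literally. The paper's proof works for any non-Galois totally ramified $L$ with $[K_0:\Q_2]=2$ and uses only that $t$ does not centralize inertia. What your version buys is an explicit mechanism: Frobenius swaps the eigenlines of an inertia element. This is exactly the shape that appears later, with $M(x)$ diagonal and $M(\varphi)$ antidiagonal in $D_{pc}(3,0,\alpha)$ and $D_{pc}(6,0,i,\alpha)$.

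A minor point: the rationality of $M(\varphi)$ that you invoke in Step 1 is not actually needed there; invertibility suffices.
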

\begin{proof}
    We work in the basis provided by Lemma~\ref{L: adapted basis}.
    Since $[K_0:\Q_2]=2$ then $\varphi^2$ is linear and its describe by the matrix $M(\varphi^2)=M(\varphi)\sigma(M(\varphi))=M(\varphi)^2$. Let $g\in I_{K/\Q_2}$, since $g$ commutes with $\varphi$ it also commutes with $\varphi^2$ hence $M(\varphi)^2M(g)=M(g)M(\varphi)^2$. Moreover, the matrix equation for $g\varphi=\varphi g$ reads $M(\varphi)\sigma(M(g))=M(g)M(\varphi)$. Since $\varphi_0^2-a_2\varphi_0+2=0$ and the matrices of $\varphi_0$ and $\varphi$ are the same, if $a_2\neq 0$ we obtain $M(\varphi)=\frac{M(\varphi)^2+2}{a_2}$. Thus $M(g)$ commuting with $M(\varphi)^2$ implies that it also commutes with $M(\varphi)$, and hence $M(g)\in M_2(\Q_2)$. Then $M(g)M(t)=M(t)M(g)$, which implies that $gtg^{-1}t^{-1}\in I_{K/\Q_2}$ acts trivially on $D$. By the faithfulness of the action, $gtg^{-1}t^{-1}=1$. Thus $\Gal(K/L)$ is normal, so $L$ is Galois, a contradiction.
\end{proof}
\begin{lemma}\label{L: curves satisfy A}
    Let $L,K$ and $t$ be as described in Lemma~\ref{L: t exists} (in particular, $L$ can be any of the fields in Table~\ref{Ta: InTypes}), and let $E/\Q_2$ be an elliptic curve with inertia type associated to $L$ and whose reduction has trace of Frobenius $a_2$. Finally, let $D=D_{cris,K}^{*}(V_2(E))$ be an $\upgmt$. Then $D$ satisfies condition ${\bf (A)}$.
\end{lemma}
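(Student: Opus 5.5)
We check the four clauses of condition (A) one at a time, using the standard facts recalled in \S\ref{S:Background}. The first clause is immediate. Since $V_2(E)$ is a $2$-dimensional $\Q_2$-representation that becomes crystalline over $K$ (by Lemma~\ref{L: InTypes} and the $p$-adic N\'eron--Ogg--Shafarevich criterion), we have $\dim_{K_0}D^{*}_{cris,K}(V_2(E))=\dim_{\Q_2}V_2(E)=2$. Forgetting the filtration, $D$ is then an $\upgmt$.

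For the inertia conditions we pass to Weil--Deligne representations. For $g\in I_{K/\Q_2}$ we have $d(g)=0$, so $\rho_{WD}(D)(g)=\bar g$ acts $K_0$-linearly on $D$. By the relation $\rho_{WD}(D_{cris,K}^*(V_2(E)))\otimes\rho_{WD}(\Q_2\{1\})\simeq WD_\ell$, and since the twist by $\Q_2\{1\}$ is unramified, the restriction to inertia of $\rho_{WD}(D)$ is isomorphic, after extending scalars to $\ma C$, to the $\ell$-adic inertia type $\tau_E$. The latter factors through $I_{K/\Q_2}$. It is faithful on $I_{K/\Q_2}$ because $L$, and hence $K$, is minimal: by N\'eron--Ogg--Shafarevich, the kernel of $\tau_E$ on inertia cuts out the minimal extension of $\Q_2^{un}$ over which $E$ gets good reduction, and that extension is $KQ_2^{un}$. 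Alternatively, one can read faithfulness off directly from the types in Table~\ref{Ta: InTypes}. For the determinant, $\det\rho_{E,\ell^\infty}=\chi_\ell$ is unramified on the Weil--Deligne side, so $\det\tau_E=1$, and therefore $\det g=1$ on $D$ for every $g\in I_{K/\Q_2}$.

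For the Frobenius clause, Lemma~\ref{L: t exists} gives $D_0=D^{\langle t\rangle}=D^{\Gal(K/L)}=D^{*}_{cris,L}(V_2(E))$. Since $L$ is totally ramified, $L_0=\Q_2$. By the crystalline--\'etale comparison and Katz--Messing, as recalled in the contravariant convention subsection, $\varphi_0$ has characteristic polynomial $X^2-a_2X+2$, where $a_2$ is the trace of Frobenius of the reduction of $E$ over the residue field $\F_2$ of $L$. Finally, $a_2^2-8\neq0$ because $a_2\in\Z$ (Hasse gives $|a_2|\le 2\sqrt2$), and $8$ is not a square.

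The main obstacle is the faithfulness and determinant step. One must justify carefully that the Weil--Deligne representation attached to $D$ via $D_{pcris}^*$ restricts on inertia to a representation isomorphic to $\tau_E$, and that minimality of $L$ in Table~\ref{Ta: InTypes} really means no proper quotient of $I_{K/\Q_2}$ suffices. I would argue this by independence of $\ell$ together with the observation that inertia acting trivially on $D$ would let $D$ descend, contradicting minimality of the semistability defect $e=|I_{K/\Q_2}|$.
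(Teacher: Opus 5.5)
Your proposal is correct and matches the paper's proof in structure, and the Frobenius clause is handled exactly as in the paper, via $D_0=D^{\Gal(K/L)}=D^{*}_{cris,L}(V_2(E))$ and Katz--Messing. The only difference is in how you get $\det g=1$ and faithfulness: you import $\ell$-adic information ($\det\rho_{E,\ell^\infty}=\chi_\ell$ is unramified, and $|\tau_E(I_{\Q_2})|=e=|I_{K/\Q_2}|$) through $\ell$-independence of the Weil--Deligne representation, whereas the paper reads $\det g=1$ directly off $\Lambda^2D=K_0\{-1\}$ and proves faithfulness by the descent contradiction you sketch at the end (good reduction over $K^{\langle g,t\rangle}\subsetneq L$); both routes are valid.
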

\begin{proof}
    Suppose that $g\in I_{K/\Q_2}$, $g\neq 1$ acts trivially on $D$, and let $F:=K^{\langle g,t \rangle}$. Then $D$ has trivial inertia action as a filtered $(\varphi,\Gal(K/F))$-module. Thus the corresponding Weil--Deligne representation $\rho_{WD}$ is unramified, so $E$ attains good reduction over $F\subsetneq L=K^{\langle t \rangle}$, contradicting the minimality of $L$. Since $\Lambda^2 D=K_0\{-1\}$ has trivial $I_{K/\Q_2}$-action, we must have $\det g=1$.
    
    Finally, from $\langle t\rangle=\Gal(K/L)$ it follows that $D_{cris,L}^{*}(V_2(E))\simeq D_{cris,K}^{*}(V_2(E))^{\Gal(K/L)}\simeq D_{cris,K}^{*}(V_2(E))^{\langle t\rangle}=D_0$. Moreover, we have $D_{cris,L}^{*}(V_2(E))\simeq D_{cris,L}(H^1_{et}(E,\Q_2))\simeq H_{cris}^{1}(\tilde{E}/\ma Z_2)[1/2]$, where $\tilde{E}$ is the reduced curve. By \cite[Theorem~1]{KatzMessing1974}, the characteristic polynomial of the crystalline Frobenius acting on $H_{cris}^{1}(\tilde{E}/\Q_2)$ is the same as the characteristic polynomial of the relative $q$-th power Frobenius acting on $H^1_{et}(\tilde{E}/\overline{\F}_2,\Q_l)$. Since $L$ is totally ramified, $q=p=2$, and the characteristic polynomial is known to be $X^2-a_2X+2$ with $a_2\in \ma Z$. In particular, $a_2^2-8\neq 0$.
\end{proof}

\begin{lemma}\label{L: pot supersingular}
    Let $E/\Q_2$ be an elliptic curve with potentially good reduction and let $e$ be its semistability defect. If $e\geq 3$, then $E$ is potentially supersingular.
\end{lemma}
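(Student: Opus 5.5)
We need to show that if $e\geq 3$ then $a_2=0$ or $\pm 2$, i.e.\ the reduction over $L$ is supersingular (over $\F_2$, supersingular means $a_2\equiv 0\pmod 2$). The approach is to combine Lemma~\ref{L: curves satisfy A} with the inertia action on the unit-root part.

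\emph{Setup.} Suppose the reduction over $L$ is ordinary, i.e.\ $a_2$ is odd. Then $X^2-a_2X+2$ has two distinct roots in $\Q_2$: a unit root $u$ and a root $2/u$ of valuation $1$. Let $D=D_{cris,K}^*(V_2(E))$; by Lemma~\ref{L: curves satisfy A} it satisfies condition \textbf{(A)}.

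\emph{Decomposition.} Since $L$ is totally ramified, $q=2$ for the reduction over $L$, so the same polynomial governs every Galois conjugate. The linearized Frobenius $\varphi^f$ on $D$ (with $f=[K_0:\Q_2]$) then has slopes $0$ and $1$ with multiplicity one each. The slope decomposition $D=D_{0}\oplus D_{1}$, into the slope-$0$ and slope-$1$ parts, is canonical: it is the Dieudonné--Manin decomposition. Every $g\in\Gal(K/\Q_2)$ commutes with $\varphi$, so each $g$ preserves both lines, and the inertia $I=I_{K/\Q_2}$ acts on $D$ through a diagonal (hence abelian) subgroup of $\GL_2(K_0)$. The characters are $\chi$ and $\chi^{-1}$, since $\det g=1$.

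\emph{Conclusion from faithfulness.} Faithfulness in \textbf{(A)} forces $I$ to embed into $K_0^\times$ via $\chi$, so $I$ is cyclic and its order divides $|\mu(K_0)|$. Here $K_0$ is $\Q_2$ or $\Q_4$, with roots of unity of order $2$ or $6$. This already rules out $I=Q_8$ and $I=\SL_2(\F_3)$, which are non-abelian. For $e=3,4,6$ a finer argument is needed; these are the cases I expect to be the main obstacle.

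\emph{Case $e=4$.} $C_4$ does not embed in $\Q_4^\times$, since $\Q_4$ contains no $\zeta_4$. This settles $e=4$.

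\emph{Cases $e=3,6$.} Here $\Q_4$ does contain $\zeta_3$, so faithfulness alone does not suffice. I would instead use the semilinear compatibility of $t$ with $g$. The element $t$ is $\sigma$-semilinear, commutes with $\varphi$, and so also preserves the slope lines; moreover $tgt^{-1}=g^{-1}$ since $G=D_3$ or $D_6$. Write $t(e_u)=c\,e_u$ on the unit line, with $g(e_u)=\zeta e_u$. Then
\[
t g t^{-1}(e_u)=\sigma(\zeta)e_u=\zeta^{-1}e_u ,
\]
which is consistent, so this gives no contradiction by itself.

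Instead I would use the filtration, i.e.\ weak admissibility. The filtration line $\Fil^1\subset D_K$ is $I$-stable under the diagonal action. Weak admissibility forces $\Fil^1$ to be neither the slope-$0$ line nor the slope-$1$ line extended to $K$: the slope-$0$ line would be a subobject $\tilde D$ with $t_N(\tilde D)=0<1=t_H(\tilde D)$, and the slope-$1$ line would be a quotient violating admissibility. Since $\chi\neq\chi^{-1}$ (the action is faithful and $e\geq 3$), the only $I$-stable lines in $D_K$ under the twisted diagonal action are the twists of the two eigenlines. To verify this, I would write the $g$-action on $D_K=D\otimes K$ as $g\otimes g$ and check that $I$-stable lines correspond to $K_0$-rational $I$-eigenlines via Hilbert 90 descent: $D_K^{I}$-lattices give $D_K=(D_K)^{I}\otimes K$ with $(D_K)^I$ over $K^I=K_0$ or its unramified twist. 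This contradicts admissibility, so $a_2$ must be even.

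Finally, I would note the alternative classical route: an ordinary curve has potentially good ordinary reduction with $e\mid$ the order of roots of unity in the residue field's Witt vectors, via the canonical subgroup or étale quotient, giving $e\leq 2$ over $\Q_2$. I would present the Fontaine argument as the main proof.
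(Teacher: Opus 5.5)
Your argument for $e=4,8,24$ is sound: the slope lines are $\Gal(K/\Q_2)$-stable, inertia acts on them by $\chi^{\pm1}$, and faithfulness embeds $I$ into $\mu(K_0)$. The treatment of $e=3,6$, however, has a genuine gap, because both claims used in the filtration step are false.

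\begin{itemize}
\item \emph{The slope-$1$ line is not excluded.} Weak admissibility allows $\Fil^1D_K$ to be the slope-$1$ line. In that case the slope-$0$ line has $t_N=t_H=0$ and the slope-$1$ line has $t_N=t_H=1$, so every subobject and every quotient is balanced. This is precisely the split module $D_c(1,\pm1,0)$ of Proposition~\ref{P: e=1ordinary}. Only the slope-$0$ line is excluded.
\item \emph{There are many $I$-stable lines.} The filtration is stable under the diagonal action $g\otimes g$, which is $K$-semilinear, not $K$-linear. By the very descent you invoke, $D_K=(D_K)^{I}\otimes_{K_0}K$. Hence the $I$-stable $K$-lines correspond to the $K_0$-lines of the two-dimensional space $(D_K)^I$, a whole $\PP^1(K_0)$ of them. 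The hypothesis $\chi\neq\chi^{-1}$ only pins down lines stable under the linear action $g\otimes 1$. For instance, in $D_{pc}(3,0,\alpha)$ one has $M(x)=\mathrm{diag}(\zeta_3,\zeta_3^2)$, yet the stable lines $K(\alpha e_1+(\sqrt[3]{2})^2e_2)$ are eigenlines only for $\alpha\in\{0,\infty\}$.
\end{itemize}
So the filtration cannot produce the contradiction; the obstruction is already present in the unfiltered module.

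What is missing is the compatibility of $g\in I$ with $\varphi$ itself; you only tested the relation with $t$. Let $K_0e$ be a slope line, with $\varphi(e)=ce$ and $g(e)=\chi(g)e$. Since $\varphi$ is $\sigma$-semilinear and $g$ is $K_0$-linear,
\[
\varphi(ge)=\sigma(\chi(g))\,c\,e,\qquad g(\varphi e)=\chi(g)\,c\,e .
\]
Thus $\varphi g=g\varphi$ forces $\chi(g)=\sigma(\chi(g))\in\Q_2^{\times}$. Being a root of unity, $\chi(g)=\pm1$, so faithfulness gives $e\le 2$. This one computation disposes of every $e\ge 3$ at once, making the $\mu(K_0)$ and non-abelian case distinctions unnecessary. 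When $K_0=\Q_4$ you could alternatively invoke Lemma~\ref{L: a_2=0} together with Lemma~\ref{L: curves satisfy A}.

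The paper simply cites \cite{Dokchitser2015}. A standard argument, which your closing remark gestures at, runs as follows. By Serre--Tate, the inertia group of the minimal good-reduction extension embeds into $\Aut_{\overline{\F}_2}(\tilde E)$. This group is $\{\pm1\}$ for an ordinary curve in characteristic $2$, since its $j$-invariant is nonzero.
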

\begin{proof}
    See \cite[Theorem 3.2]{Dokchitser2015}.
\end{proof}

\begin{lemma}\label{L: invariant subspaces}
    Let $D$ be an $\upgmt$ that satisfies ${\bf(A.2)}$ for $K/L$. Then $D$ has no nonzero proper $K_0$-subspaces stable under $\varphi$. For any $\Gal(K/\Q_2)$-stable line $R$ in $D_K$ the filtration $\Fil^{i}D_K=D_K$ for $i\leq 0$, $\Fil^1D_K=R$ and $\Fil^{i}D_K=0$ for $i\geq 2$ is admissible. In particular, the admissible filtrations are parametrize by $\PP^1(\Q_2)$.  
\end{lemma}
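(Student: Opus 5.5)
We prove the three assertions in turn, working throughout in the basis of Lemma~\ref{L: adapted basis}. By Lemma~\ref{L: a_2=0} we have $a_2=0$, so $M(\varphi)=\tbt{0}{-2}{1}{0}$ and $M(t)=\Id$.

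\emph{No $\varphi$-stable lines.} Suppose $\ell=K_0v\subset D$ is a $\varphi$-stable line. Since $[K_0:\Q_2]=2$, the map $\varphi^2$ is $K_0$-linear, with matrix $M(\varphi)\sigma(M(\varphi))=M(\varphi)^2=-2\Id$. So $\varphi^2$ is a scalar and gives no information. We therefore use $\varphi$ directly. Write $\varphi(v)=\lambda v$ with $\lambda\in K_0$, and $v=(x,y)$ in coordinates. The coordinates of $\varphi(v)$ are $M(\varphi)\sigma(v)=(-2\sigma(y),\sigma(x))$, so we need
$$-2\sigma(y)=\lambda x,\qquad \sigma(x)=\lambda y.$$
If $x=0$ then $y=0$, so assume $x\neq 0$ and rescale to $x=1$. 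Then $y=1/\lambda$ and $-2\sigma(\lambda)^{-1}=\lambda$, hence $N_{K_0/\Q_2}(\lambda)=\lambda\sigma(\lambda)=-2$. Because $K_0/\Q_2$ is unramified, every norm has even valuation, while $v_2(-2)=1$. This contradiction shows there is no such line.

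\emph{Weak admissibility.} The Hodge polygon is $t_H(D)=1$. The Newton number is $t_N(D)=v_2(\det)$ of $\varphi$ on $\Lambda^2D$, which equals $v_2(2)=1$; equivalently, $\det M(\varphi)=2$. The only subobjects to check are the zero object and $D$ itself, since by the first part no nonzero proper $\varphi$-stable subspace exists, and any subobject of the $(\varphi,\Gal)$-module must in particular be $\varphi$-stable. Hence every filtration with the stated jumps is weakly admissible. It only remains to require that the filtration be $\Gal(K/\Q_2)$-stable, which is exactly the condition that $R$ be a $\Gal(K/\Q_2)$-stable line of $D_K$ under the diagonal action.

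\emph{Parametrization.} We identify the $\Gal(K/\Q_2)$-stable lines of $D_K$ by Galois descent. The diagonal action on $D_K=D\otimes_{K_0}K$ is semilinear with respect to $K$ and comes from a genuine action of $\Gal(K/\Q_2)$. By Hilbert 90 / Galois descent, $D_K=(D_K)^{\Gal(K/\Q_2)}\otimes_{\Q_2}K$, with $(D_K)^{\Gal(K/\Q_2)}$ a $2$-dimensional $\Q_2$-vector space. Stable $K$-lines then correspond bijectively to $\Q_2$-lines in this invariant space, that is, to points of $\PP^1(\Q_2)$.

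The main point to verify is that the descent datum is effective. This holds because the action is a semilinear action of the full group $\Gal(K/\Q_2)$ and $K/\Q_2$ is Galois, so standard descent applies. The step I expect to be the real content is the norm/valuation argument excluding $\varphi$-stable lines.
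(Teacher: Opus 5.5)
Your proof is correct and follows the paper's argument. You rule out $\varphi$-stable lines because $N_{K_0/\Q_2}(\lambda)=-2$ would have odd valuation, observe that $t_N(D)=t_H(D)=1$ with no proper nonzero subobjects to check, and parametrize stable lines by Galois descent to $(D_K)^{\Gal(K/\Q_2)}\simeq\Q_2^2$. The one cosmetic difference is that the paper gets the norm equation in one line from $\varphi^2(v)=\sigma(\lambda)\lambda v$ and $\varphi^2=-2\Id$, so your remark that $\varphi^2$ ``gives no information'' undersells it; your coordinate computation arrives at the same equation.
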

\begin{proof}
    By Lemmas~\ref{L: adapted basis} and~\ref{L: a_2=0}, $M(\varphi)=\tbt{0}{-2}{1}{0}$. If $\varphi(v)=\lambda v$, semilinearity gives $\varphi^{2}(v)=\operatorname{Nm}(\lambda) v$. However, $\varphi^2=-2Id$, so $\operatorname{Nm}(\lambda)=-2$, which is impossible since $K_0$ is unramified. If $D$ has Hodge--Tate weights $(0,1)$, then, since $\dim_{K_0}(D)=2$, we have $t_{H}(D)=1$; the description of $\varphi$ gives $t_N(D)=1$. Since there are no invariant subspaces then the condition $t_N(\tilde{D})\geq t_H(\tilde{D})$ on subobjects is automatically satisfied. Moreover, with the filtration of the statement $t_N(D)=t_H(D)=1$ and so the filtration is admissible. We have shown that the admissible filtrations are parametrize by lines in $(D\otimes_{K_0}K)^{\Gal(K/\Q_2)}$ which by Galois descent is a $2$-dimensional $\Q_2$ vector space thus $\PP((D\otimes_{K_0}K)^{\Gal(K/\Q_2)})\simeq \PP^{1}(\Q_2)$.
\end{proof}

The above lemma concerns admissible filtration not isomorphisms class of admissible filtrations. To shows that each parameter in $\PP^{1}(\Q_2)$ defines a nonisomorphic $\fpgmt$ we will use the following lemma.

\begin{lemma}\label{L: unfiltered automorphism}
   Let $D$ be an $\upgmt$ satisfying ${\bf(A)}$. In the basis of Lemma~\ref{L: adapted basis}, every automorphism $f$ of the unfiltered module has $M(f)=c_0\Id+c_1M(\varphi)$ with $c_0,c_1\in\Q_2$. Suppose moreover that some $g\in I_{K/\Q_2}$ has nonscalar matrix and $\cl L(\Id,M(g))\cap\cl L(\Id,M(\varphi))=\cl L(\Id)$,
where $\cl L$ denotes $K_0$-linear span. Then $f$ is scalar. 
\end{lemma}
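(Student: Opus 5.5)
An automorphism $f$ of the unfiltered module is a $K_0$-linear bijection commuting with $\varphi$ and with every $g\in\Gal(K/\Q_2)$. The plan is to use the commutation with $t$ and with $\varphi$ to pin down $M(f)$, and then use the extra hypothesis on $g$.

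\textbf{Step 1: $M(f)$ has entries in $\Q_2$.} Work in the basis of Lemma~\ref{L: adapted basis}, so that $M(t)=\Id$ and $t$ is $\sigma$-semilinear. The relation $f\circ t=t\circ f$ reads
$$M(f)\,\sigma(M(t))=M(t)\,\sigma(M(f)),$$
that is, $M(f)=\sigma(M(f))$. Since $K_0^{\sigma}=\Q_2$, we get $M(f)\in M_2(\Q_2)$.

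\textbf{Step 2: $M(f)$ is a polynomial in $M(\varphi)$.} Commutation with $\varphi$ gives
$$M(f)\,M(\varphi)=M(\varphi)\,\sigma(M(f))=M(\varphi)\,M(f).$$
So $M(f)$ commutes with the companion matrix $M(\varphi)=\tbt{0}{-2}{1}{a_2}$. A companion matrix is cyclic, so its centralizer in $M_2(\Q_2)$ is $\Q_2[M(\varphi)]=\{c_0\Id+c_1M(\varphi)\}$. One can also check this directly by writing out the four entries. This proves the first assertion.

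\textbf{Step 3: the second assertion.} Since $g\in I_{K/\Q_2}$ acts $K_0$-linearly, $f$ commuting with $g$ gives $M(f)M(g)=M(g)M(f)$. Substitute $M(f)=c_0\Id+c_1M(\varphi)$. If $c_1=0$ we are done, so suppose $c_1\neq0$. Then $M(\varphi)=c_1^{-1}(M(f)-c_0\Id)$ commutes with $M(g)$.

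Because $M(g)$ is nonscalar in $M_2(K_0)$, it is cyclic, so its centralizer in $M_2(K_0)$ is $\cl L(\Id,M(g))$. Hence $M(\varphi)\in\cl L(\Id,M(g))\cap\cl L(\Id,M(\varphi))=\cl L(\Id)$, so $M(\varphi)$ would be scalar. This is impossible, since $M(\varphi)$ has the off-diagonal entry $1$. Therefore $c_1=0$ and $f=c_0\Id$ is scalar.

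\textbf{Main obstacle.} The only point needing care is the centralizer fact for nonscalar $2\times2$ matrices over a field. The direct proof: pick $v$ with $(v,Av)$ a basis. A matrix $B$ commuting with $A$ is determined by $Bv=\alpha v+\beta Av$, and then $B=\alpha\Id+\beta A$. With this in hand, everything else is formal.
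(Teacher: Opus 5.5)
Your proof is correct and follows the paper's argument. As there, $t$-equivariance with $M(t)=\Id$ forces $M(f)\in M_2(\Q_2)$, commutation with $\varphi$ puts $M(f)$ in $\cl L(\Id,M(\varphi))$, and commutation with the nonscalar $M(g)$ puts it in $\cl L(\Id,M(g))$. The one small difference is in Step 3: you argue by contradiction through $M(\varphi)$, whereas the paper notes directly that $M(f)$ lies in $\cl L(\Id,M(g))\cap\cl L(\Id,M(\varphi))=\cl L(\Id)$, using the same centralizer fact one step more directly.
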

\begin{proof}
    Since $f$ commutes with $t$ and $M(t)=\Id$, $\sigma(M(f))=M(f)$, so $M(f)\in M_2(\Q_2)$. Commutation with $\varphi$ therefore becomes $M(f)M(\varphi)=M(\varphi)M(f)$ since $M(\varphi)$ is not scalar then $M(f)\in\cl L(\Id,M(\varphi))$ wich proves the first assertion. Similarly, commutation with the linear operator $g$ implies $M(f)\in\cl L(\Id,M(g))$. By the hypothesis on the intersection of these spaces $f$ must be scalar.
\end{proof}

\begin{remark}
    When a $\fpgmt$ satisfies the condition of Lemma~\ref{L: invariant subspaces} it has a $\PP^{1}(\Q_2)$ of admissible filtrations, if we can also apply Lemma~\ref{L: unfiltered automorphism} any unfiltered automorphism is scalar and thus the filtration for each parameter in $\PP^{1}(\Q_2)$ defines a nonisomorphic $\fpgmt$. This will be the situation in most of the cases below, thus, even though we will only be able to give a description in terms of equations for the parameter, and not the parametrization itself the last entry of Table~\ref{Ta: Main} is justified. 
\end{remark}

The following is well known.
\begin{lemma}\label{L: Conditions on DVpE}
    Let $E/\Q_2$ be an elliptic curve with potentially good reduction. Then:
 \begin{itemize}
        \item $D_{pcris}^*(V_2(E))$ can be represented by $D$, a $\fpgmt$ in which $K$ is the Galois closure of one of the fields $L$ in Table~\ref{Ta: InTypes}.
        \item $P_{char}(\varphi\vert_{D_{0}})=X^2-a_2X+2$ with $a_2\in \Z$ and $\vert a_2\vert \leq 2\sqrt{2}$.
        \item $\operatorname{Tr}(\rho_{WD}(D)(g))\in \Q$ for all $g\in W_{\Q_2}$
        \item $\Lambda^2 D=K_0\{-1\}$
        \item $\dim_K\Fil^0D_K=2$, $\dim_K\Fil^1D_K=1$, $\dim_K\Fil^2D_K=0$
    \end{itemize}
\end{lemma}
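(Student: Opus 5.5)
The plan is to verify the five items one at a time, since each follows from results already recalled in \S\ref{S:Background} and \S3.

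For the first item, I would invoke Lemma~\ref{L: InTypes}. It gives a field $L$ in Table~\ref{Ta: InTypes} over which $E$ attains good reduction, so $V_2(E)$ is crystalline over $L$, and hence over its Galois closure $K$, by the $p$-adic N\'eron--Ogg--Shafarevich criterion. The discussion at the start of \S3 then lets us represent $D_{pcris}^{*}(V_2(E))$ by $D=D_{cris,K}^{*}(V_2(E))$, which is a filtered $(\varphi,\Gal(K/\Q_2))$-module.

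For the second item, the computation in the proof of Lemma~\ref{L: curves satisfy A} applies directly. We have $D_0=D^{\langle t\rangle}\simeq D_{cris,L}^{*}(V_2(E))\simeq H^1_{cris}(\tilde E/\Z_2)[1/2]$, and by Katz--Messing the characteristic polynomial of $\varphi\vert_{D_0}$ is $X^2-a_2X+2$, where $a_2\in\Z$ is the trace of Frobenius of the reduction over $\F_2$ (this uses that $L$ is totally ramified). The bound $\vert a_2\vert\le 2\sqrt2$ is the Hasse bound for $\tilde E/\F_2$.

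For the third item, I would use the independence of $\ell$ recalled in \S2. We have $\rho_{WD}(D)\otimes\rho_{WD}(\Q_2\{1\})\simeq WD_\ell$ for $\ell\neq 2$. The $\ell$-adic Weil--Deligne representation has traces in $\Q$: for $g\in W_{\Q_2}$ these traces are traces of Frobenius-type elements on $V_\ell(E)$, and they are rational because they are independent of $\ell$ and lie in $\Q_\ell$ for every $\ell$. This is the standard fact for elliptic curves, which I would cite from Rohrlich~\cite{Rohrlich1994}. Twisting by $\rho_{WD}(\Q_2\{1\})$, an unramified character with rational values (powers of $2$), multiplies each trace by a rational number, so the traces of $\rho_{WD}(D)$ are rational as well. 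This step carries the most content, and the only point to check carefully is the normalization of the twist under the contravariant convention.

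For the fourth item, $\det V_2(E)=\chi_2$, so $\Lambda^2 V_2(E)\simeq\Q_2(1)$. Since $D^*_{cris,K}$ is compatible with exterior powers and duals, $\Lambda^2 D=D_{cris,K}(\Q_2(-1))=K_0\{-1\}$.

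For the fifth item, the Hodge--Tate weights of $V_2(E)$ are $(0,1)$. Under the dual convention these weights give the stated filtration dimensions, as recorded at the end of \S2.4.
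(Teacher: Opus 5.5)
Your proposal is correct and follows the paper's proof item by item. You use Lemma~\ref{L: InTypes} for the first point, Lemma~\ref{L: curves satisfy A} with the Hasse bound for the second, and independence of $\ell$ with rational $\ell$-adic traces (plus the harmless twist by $\rho_{WD}(\Q_2\{1\})$) for the third. The fourth comes from $\det V_2(E)=\chi_2$ under $D^{*}_{cris,K}$, and the fifth from the Hodge--Tate weights $(0,1)$.

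The one soft spot is your heuristic that the $\ell$-adic traces are rational "because they are independent of $\ell$ and lie in $\Q_\ell$ for every $\ell$." Rationality is really part of the standard independence-of-$\ell$ statement, not a formal consequence of it. Since you cite it as a known fact, exactly as the paper does, nothing is lost.
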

\begin{proof}
    The first statement follows from Lemma~\ref{L: InTypes} and the second from Lemma~\ref{L: curves satisfy A} together with the Hasse-Weil bound. Since the Weil--Deligne representations of $E$ are independent of $\ell$ and traces are known to be rational for $\ell\neq p$, the third point follows. The fourth statement follows by applying $D_{cris,K}^{*}$ to the equality $\det \rho_{E,p^{\infty}}=\chi_p$. Finally, the last statement follows from the fact that the Hodge--Tate weights of $\rho_{E,p^{\infty}}$ are $\{0,1\}$.
\end{proof}

\begin{remark}\label{Rmk: Avoid rational traces hypothesis}
    The last four conditions of Lemma~\ref{L: Conditions on DVpE} are the usual necessary conditions for elliptic curves. For $p\geq 5$, none is superfluous: Volkov explicitly uses all of them to exclude objects from the classification. However we do not need to impose the condition $\operatorname{Tr}(\rho_{WD}(D)(g))\in \Q$; it follows once we impose all the others, see~\cite{MAGMAFiles}. We believe that this also holds for $p=3$, but we have not repeated all the computations of~\cite{bosco20233adicrepresentationsarisingelliptic} to verify it. Volkov uses this condition to exclude certain combinations of fields $L$ and traces $a_p$. For $p=2$ (and perhaps $p=3$), this is likely unnecessary because of the small number of traces allowed by the Hasse--Weil bound.   
\end{remark}

\section{Crystalline $e=1$}

\subsection{Ordinary}

Let $a_2\in\{-1,1\}$, let $u\in \Z_2^{\times}$ be the unique root of
$X^2-a_2X+2$ with valuation $0$, and let $\alpha\in\{0,1\}$. We define the $\fpgmt$
$D_c(1,\pm1,\alpha)$ as
\begin{itemize}
    \item $D:=\Q_2e_1\oplus\Q_2e_2$
    \item $M(\varphi)=\begin{pmatrix}
                  u & 0       \\
                  0 & 2u^{-1} \\
              \end{pmatrix}$
    \item $\Fil^1D:=(\alpha e_1 +e_2)\Q_2$
\end{itemize}
Note that different pairs $(a_2,\alpha)\in\{-1,1\}\times\{0,1\}$ produce nonisomorphic $D_c(1,\pm1 ,\alpha)$.

\begin{proposition}\label{P: e=1ordinary}
    Let $E/\Q_2$ be an elliptic curve with good ordinary reduction and trace of Frobenius $a_2$. Then $D_{cris}^{*}(V_2(E))\simeq D_c(1,a_2,\alpha)$ for a uniquely determined $\alpha$.

\end{proposition}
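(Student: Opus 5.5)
Since $E$ has good reduction over $\Q_2$, we take $K=\Q_2$. Then $D:=D_{cris}^{*}(V_2(E))$ is a filtered $\varphi$-module over $\Q_2$ with $\varphi$ linear, and by Lemma~\ref{L: curves satisfy A} (or Lemma~\ref{L: Conditions on DVpE}) its characteristic polynomial is $X^2-a_2X+2$. For $a_2=\pm1$ the discriminant is $a_2^2-8=-7$. Since $-7\equiv 1 \pmod 8$ is a square in $\Q_2$, the polynomial splits over $\Q_2$, and by Newton polygons its roots are the unit $u$ and $2u^{-1}$, of valuation $1$; these roots are distinct. So $\varphi$ is diagonalizable, and we may choose eigenvectors $e_1,e_2$ with $\varphi(e_1)=ue_1$ and $\varphi(e_2)=2u^{-1}e_2$. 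The only $\varphi$-stable lines are $\Q_2e_1$ and $\Q_2e_2$.

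Next we determine the filtration. By Lemma~\ref{L: Conditions on DVpE}, $\Fil^1D$ is a line $R$, with $\Fil^0=D$ and $\Fil^2=0$. Weak admissibility requires $t_N\geq t_H$ on the subobjects $\Q_2e_1$ and $\Q_2e_2$.
\begin{itemize}
\item For $\Q_2e_1$ we have $t_N=v_2(u)=0$, so we need $t_H=0$, which forces $R\neq \Q_2e_1$.
\item For $\Q_2e_2$ we have $t_N=1\geq t_H$, which always holds.
\end{itemize}
Together with $t_N(D)=t_H(D)=1$, this shows the admissible filtrations are exactly the lines $R\neq\Q_2e_1$. Any such line can be written $R=(\beta e_1+e_2)\Q_2$ with $\beta\in\Q_2$.

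To normalize, note that the automorphisms of the unfiltered module are exactly the diagonal matrices $\operatorname{diag}(x,y)$ with $x,y\in\Q_2^\times$, since they must commute with $\varphi$, which has distinct eigenvalues. Under such an automorphism the line $(\beta e_1+e_2)\Q_2$ goes to $(\beta x y^{-1}e_1+e_2)\Q_2$. Hence the orbits are $\{\beta=0\}$ and $\{\beta\neq0\}$, with representatives $\alpha=0$ and $\alpha=1$. This gives $D\simeq D_c(1,a_2,\alpha)$. The parameter $\alpha$ is uniquely determined because any isomorphism $D_c(1,a_2,0)\to D_c(1,a_2,1)$ would be diagonal, and a diagonal map cannot send the line $\Q_2e_2$ to the line spanned by $e_1+e_2$. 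Also, $a_2$ is recovered from the characteristic polynomial of $\varphi$.

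The only point needing care is the splitting of $X^2\mp X+2$ over $\Q_2$, i.e.\ that $-7$ is a $2$-adic square, together with the existence of the unit root $u\in\Z_2^\times$. Hensel's lemma applied to $X^2-a_2X+2\equiv X(X-a_2)\pmod 2$ handles this directly: the two roots mod $2$ are simple, so they lift to roots in $\Z_2$. The rest is routine.
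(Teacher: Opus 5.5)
Your proposal is correct and follows essentially the same route as the paper: you split $X^2-a_2X+2$ by Hensel into the roots $u,2u^{-1}$ and diagonalize $\varphi$, you use weak admissibility on the stable line $\Q_2e_1$ to exclude $\Fil^1D=\Q_2e_1$, and you use the diagonal automorphisms commuting with $\varphi$ both to normalize $\alpha\in\{0,1\}$ and to show that $\alpha$ is uniquely determined. The paper additionally identifies the two cases as a split sum of characters ($\alpha=0$) and a nonsplit extension ($\alpha=1$), which is not needed for the statement itself.
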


\begin{proof}
    In this case, $K_0=\Q_2$, so $D$ is a $\Q_2$-vector space and $\varphi$ is a linear endomorphism with characteristic polynomial $P_{char}(\varphi)=X^2-a_2X+2$. Since $a_2$ is a unit, by Hensel's lemma this polynomial has two distinct roots $u,2u^{-1}$ for $u\in \Z_{2}^{\times}$. Thus there is a basis in which the matrix of $\varphi$ is

    $$M(\varphi)=\begin{pmatrix}
            u & 0       \\
            0 & 2u^{-1} \\
        \end{pmatrix}.$$

    Let $D_1,D_2$ be the subspaces generated by $e_1$ and $e_2$, respectively. These
    subspaces are stable under $\varphi$ and therefore enter the weak
    admissibility condition. If $\Fil^{1}(D)=D_1$, we have $t_N(D_1)=0<t_H(D_1)=1$, which
    violates the weak admissibility of $D$. It follows that any filtration must be
    of the form $\Fil^{1}(D)=\langle \alpha e_1+e_2\rangle$ with $\alpha\in \Q_2$. Since any
    diagonal matrix commutes with $\varphi$, all the filtrations with $\alpha\neq
        0$ are equivalent, so it suffices to study the weak admissibility of
    $\Fil^{1}(D)=D_2$ and $\Fil^{1}(D)=\langle e_1+e_2\rangle$.

    If $\Fil^{1}(D)=D_2$, i.e., $\alpha=0$, then $t_N(D_1)=t_H(D_1)=0$ and $t_N(D_2)=t_H(D_2)=1$. It
    follows that $D$ is weakly admissible and also that $D_1,D_2$ are weakly
    admissible. Moreover, it is easy to check that $D=D_1\oplus D_2$ as $\fpgmt$s. Since the Hodge--Tate weights of $D_1$ and $D_2$ are $0$ and $1$, respectively, thus we have $D_{pcris}^{*}(\eta_1)=D_1$ and $D_{pcris}^{*}(\chi_2\otimes\eta_2)=D_2$, where $\eta_i$ are unramified characters. Since the determinant of $V_2(E)$ is the cyclotomic character, $\eta_2=\eta_1^{-1}$. Since $D_{pcris}^{*}$ is an antiequivalence of categories, it follows that
    $V_2(E)=\eta_1 \oplus \eta_1^{-1} \chi_2$, where $\eta_1$ is an unramified character
    and $\chi_2$ is the $2$-adic cyclotomic
    character.

    If $\Fil^{1}(D)=\langle e_1+e_2\rangle$, then $t_N(D_1)=t_H(D_1)=0$ and $t_N(D_2)=1>t_H(D_2)=0$. In
    this case $D$ and $D_1$ are weakly admissible but $D_2$ is not. It follows that
    $D_1$ is a subobject in $MF_{\Q_p}^{\varphi,w.a.}$ while $D_2$ is a subobject in
    $MF_{\Q_2}^{\varphi}$; thus it does not correspond to a $p$-adic representation. However, since the category
    $MF_{\Q_2}^{\varphi,w.a.}$ is abelian, the subobject $D_1$ gives rise to a nonsplit short
    exact sequence

    $$ 0\longrightarrow D_1\longrightarrow D\longrightarrow \tilde{D}_2\longrightarrow 0
    $$
    where $\tilde{D}_2\simeq D/D_1$. It is easy to check that the Hodge--Tate weight
    of $\tilde{D_2}$ is $1$ and so it corresponds via $D_{pcris}^{*}$ to a character
    which, by the condition on the determinant, is the product of the unramified character $\eta_1$ and $\chi_2$. Since
    $D_{pcris}^{*}$ is contravariant, $V_2(E)$ fits into the nonsplit short
    exact sequence $$ 0\longrightarrow \chi_2\otimes\eta_1^{-1} \longrightarrow V_2(E)
        \longrightarrow \eta_1 \longrightarrow 0 $$

    Finally, $D_{c}(1,a_2,\alpha)$ and $D_{c}(1,\tilde{a_2},\beta)$ can only be isomorphic if $a_2=\tilde{a_2}$; otherwise the Frobenius operators have different eigenvalues, and so there is no isomorphism commuting with them. Moreover, if $a_2=\tilde{a_2}$, since $\varphi$ has different eigenvalues, an automorphism commuting with it must be of the form $c_0\Id+c_1\varphi$. This matrix is diagonal, so it cannot send the line $e_1+e_2$ to the line $e_2$.
 \end{proof}

\subsection{Supersingular}
Let $a_2\in\{-2,0,2\}$. We define $D_c(1,a_2)$ as
\begin{itemize}
    \item $D:=\Q_2e_1\oplus\Q_2e_2$
    \item $M(\varphi):=\begin{pmatrix}
                  0 & -2  \\
                  1 & a_2 \\
              \end{pmatrix}$
    \item $\Fil^1D:=e_1\Q_2$
\end{itemize}

\begin{proposition}\label{P: e=1supersingular}
    Let $E/\Q_2$ be an elliptic curve with good supersingular reduction and trace of Frobenius $a_2$. Then $D_{pcris}^{*}(V_2(E))\simeq D_c(1,a_2)$.

\end{proposition}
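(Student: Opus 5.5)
Here $K=L=\Q_2$, so $D=D_{cris}^{*}(V_2(E))$ is a $2$-dimensional $\Q_2$-vector space, $\varphi$ is linear with characteristic polynomial $X^2-a_2X+2$, and $a_2\in\{-2,0,2\}$ by Lemma~\ref{L: curves satisfy A} together with the Hasse--Weil bound. The plan is to first normalize $\varphi$, then show the filtration is forced up to automorphism.

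For the Frobenius, the discriminant $a_2^2-8$ is nonzero, so $\varphi$ is not scalar. Pick any $e_1$ that is not an eigenvector; by Lemma~\ref{L: adapted basis} (with $t$ trivial) the basis $(e_1,\varphi(e_1))$ puts $\varphi$ in companion form $\tbt{0}{-2}{1}{a_2}$.

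For the filtration, $\Fil^0D=D$ and $\Fil^2D=0$ are forced, and $\Fil^1D$ is a line $R\subset D$. Weak admissibility is automatic: $t_N(D)=v_2(2)=1=t_H(D)$, and I will check that $D$ has no $\varphi$-stable line. An eigenvalue $\lambda\in\Q_2$ would be a root of $X^2-a_2X+2$, and for $a_2\in\{0,\pm2\}$ both roots have valuation $1/2$. Indeed, the Newton polygon has one segment of slope $1/2$ since $v(a_2)\geq 1>1/2$, or one can simply note that $2\mid a_2$ and $v(2)=1$ is odd. So there is no $\Q_2$-eigenvalue, and every line gives an admissible filtration.

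The step needing care is showing that all lines $R$ give isomorphic objects, that is, that $\operatorname{Aut}(D,\varphi)$ acts transitively on $\PP^1(\Q_2)$. By the first part of Lemma~\ref{L: unfiltered automorphism}, the automorphisms are $c_0\Id+c_1\varphi$, which form the field $F=\Q_2[X]/(X^2-a_2X+2)$, a quadratic field. Now $D$ is a $1$-dimensional $F$-vector space with generator $e_1$, so every nonzero $v\in D$ equals $f(e_1)$ for some $f\in F^\times$. Hence $f$ sends $\Q_2e_1$ to $\Q_2 v$, the action on lines is transitive, and we may take $\Fil^1D=e_1\Q_2$. This gives $D\simeq D_c(1,a_2)$.

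Finally, $a_2$ is an isomorphism invariant because it is the trace of $\varphi$, so the objects $D_c(1,a_2)$ for different $a_2$ are pairwise nonisomorphic.
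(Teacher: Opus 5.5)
Your proof is correct and is essentially the paper's argument. In both, irreducibility of $X^2-a_2X+2$ (no $\varphi$-stable line) makes every filtration weakly admissible, and a basis $(e_1,\varphi(e_1))$ gives the companion form. The paper simply takes $e_1$ to span $\Fil^1D$ from the start, which is possible since $D_K=D$ here. That choice makes your separate transitivity step via $\Q_2[\varphi]^\times$ unnecessary, although that step rests on the same fact: any nonzero vector generates $D$ over $\Q_2[\varphi]$.
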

\begin{proof}
    Let $D=D_{pcris}^{*}(V_2(E))$. Again, since the curve has good reduction, $\varphi$ is a linear map with $P_{char}(\varphi)=X^2-a_2X+2$. Since $a_2\equiv0 \pmod2$, it follows that $P_{char}$ is irreducible, so there is no proper subspace of $D$ stable under $\varphi$. Thus any filtration in $D$ is automatically weakly admissible. Let $e_1$ be a nonzero vector of $\Fil^1(D)$ and $e_2=\varphi(e_1)$. In this basis, the matrix of $\varphi$ has the form
    $$ M(\varphi)=\begin{pmatrix}
            0 & -2  \\
            1 & a_2 \\
        \end{pmatrix}$$
    which finishes the proof.

\end{proof}

\begin{remark}
    The choice of $e_1$ as the vector spanning $\Fil^{1}D_K$ only makes sense when $K=K_0$, in which case $D_K=D$ and the filtration is by subspaces of $D$. For any ramified extension $K/\Q_2$, the filtration is by subspaces of $D_K=D\otimes_{K_0} K$, so it does not make sense to choose $e_1\in \Fil^1(D_K)$ as one of the vectors in a basis for $D$.

\end{remark}

The following is well known, but we state it here for completeness.

\begin{theorem}\label{T: realization e=1}
    Let $D\in\{D_c(1,0),D_{c}(1,\pm2),D_{c}(1,\pm1,0),D_{c}(1,\pm1,1)\}$. Then there exists an elliptic curve $E/\Q_2$ such that $D_{pcris}^{*}(V_2(E))=D$.
 
\end{theorem}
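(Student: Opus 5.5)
By Propositions~\ref{P: e=1ordinary} and~\ref{P: e=1supersingular}, every elliptic curve $E/\Q_2$ with good reduction and trace $a_2$ has $D_{pcris}^{*}(V_2(E))$ equal to one of the listed objects. It therefore suffices, for each of the seven modules, to exhibit a curve with good reduction over $\Q_2$ whose reduction has the prescribed trace $a_2$. In the ordinary case the curve must in addition have the prescribed extension class, $\alpha=0$ (split) or $\alpha=1$ (nonsplit).

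\textbf{Supersingular cases.} We only need curves over $\Q_2$ with good reduction and $a_2\in\{0,\pm2\}$. Over $\F_2$, the curves $y^2+y=x^3$, $y^2+y=x^3+x$ and $y^2+y=x^3+x+1$ have $3$, $5$ and $1$ points respectively, so $a_2=0,-2,2$. Lifting the same Weierstrass equations to $\Z_2$ gives discriminant a unit (it is $-27$ for the first), hence good reduction. Proposition~\ref{P: e=1supersingular} then shows that $D_{pcris}^{*}(V_2(E))\simeq D_c(1,a_2)$.

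\textbf{Ordinary cases.} Here $a_2=\pm1$, and the curves $y^2+xy=x^3+1$ and $y^2+xy=x^3+x^2+1$ (or similar) lift ordinary curves over $\F_2$ with $a_2=\pm1$. The key step is to control $\alpha$, that is, whether the extension
\[
0\to\chi_2\eta_1^{-1}\to V_2(E)\to\eta_1\to0
\]
splits. By Serre--Tate theory the extension class of $V_2(E)$ is governed by the Serre--Tate parameter $q\in 1+2\mathcal{O}_{\Q_2^{un}}$, and the representation splits exactly when $E$ is the canonical lift of its reduction, i.e.\ when $E$ has CM.

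\emph{Split case ($\alpha=0$).} We take a CM curve with good ordinary reduction at $2$, namely CM by an order in which $2$ splits, such as $\Q(\sqrt{-7})$. The curve $y^2+xy=x^3-x^2-2x-1$ (conductor $49$, CM by $\Z[(1+\sqrt{-7})/2]$) has $a_2=\pm1$. Its quadratic twist by the unramified character, i.e.\ by $\Q_2(\sqrt{5})$, changes the sign of $a_2$ while preserving good reduction and the splitting. This realizes both $D_c(1,\pm1,0)$.

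\emph{Nonsplit case ($\alpha=1$).} Any lift that is not the canonical lift works, for example a curve with the same reduction but with $j$-invariant different from the canonical lift's. Since the set of lifts is parametrized by $q$ and only one value is canonical, a generic lift, again twisted by $\Q_2(\sqrt5)$ if needed, gives $D_c(1,\pm1,1)$.

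\textbf{Main obstacle.} The hard step is rigorously certifying the nonsplitness. I would first try the criterion that the extension splits if and only if $E[2^\infty]$ splits as a product of its connected and étale parts over $\Z_2$, which holds if and only if $E$ is the canonical lift; for a specific non-CM curve (rational $j$ not among the CM $j$-invariants) this is automatic, since the canonical lift of an ordinary curve over $\F_2$ has CM. One also needs to check the identification of $\alpha$ with splitness, which is exactly the dichotomy in the proof of Proposition~\ref{P: e=1ordinary}.
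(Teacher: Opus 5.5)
Your overall plan is the paper's. You use good-reduction curves with $a_2\in\{0,\pm2\}$ for the supersingular modules (your point counts and discriminants are correct), a curve with CM by a field in which $2$ splits for $\alpha=0$, a non-CM curve for $\alpha=1$, and the unramified quadratic twist to flip the sign of $a_2$. The paper simply cites Serre for the dichotomy ``CM with $2$ split $\Rightarrow$ split, non-CM $\Rightarrow$ nonsplit'' and names explicit curves.

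The gap is in how you certify nonsplitness. The parameter $\alpha$ detects whether the \emph{rational} representation $V_2(E)$ splits, and that is an isogeny invariant. Being the canonical lift, equivalently $E[2^\infty]$ splitting over $\Z_2$, is not. So your chain ``$V_2(E)$ splits iff $E[2^\infty]$ splits iff $E$ is the canonical lift iff $E$ has CM'' is false in exactly the direction you need. At $p=2$ it already fails over $\Z_2$, because the nontrivial root of unity $-1$ lies in $1+2\Z_2$. Concretely, take $y^2+xy=x^3-x^2-37x-78$, with $j=255^3$ and CM by $\Z[\sqrt{-7}]$, which is $\Q$-isogenous to your curve $y^2+xy=x^3-x^2-2x-1$. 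It has good reduction at $2$ and the same reduction over $\F_2$ (the unique curve with $a_2=1$). Its $j$-invariant differs from the canonical lift's $-3375$, yet its $V_2$ is split. So ``any lift that is not the canonical lift works'' and ``same reduction but different $j$'' are wrong, and ``$V_2$ split $\Rightarrow$ $E[2^\infty]$ split'' cannot be used.

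Your fallback, a curve with rational non-CM $j$, is the right choice. Your own $y^2+xy=x^3+x^2+1$ has $j=-5^6/557\notin\Z$, so it works, as does the paper's $y^2+xy=x^3-x^2-27$. The argument just has to be run up to isogeny:
\begin{itemize}
\item If $V_2(E)=V'\oplus V''$, then $T'=(T_2(E)\cap V')\oplus(T_2(E)\cap V'')$ is a split $G_{\Q_2}$-stable lattice with $2^nT_2(E)\subset T'$.
\item The Galois-stable subgroup $2^{-n}T'/T_2(E)\subset E[2^n](\overline{\Q}_2)$ extends to a finite flat subgroup scheme $H$ over $\Z_2$. Then $E'=E/H$ is an elliptic curve over $\Z_2$ with $T_2(E')\simeq T'$ split.
\item By Tate's full faithfulness for $2$-divisible groups over $\Z_2$, the projectors come from endomorphisms of $E'[2^\infty]$. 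So $E'[2^\infty]$ is connected $\times$ étale.
\item Hence $E'$ is the canonical lift of its reduction and has CM, and so does the isogenous curve $E$.
\end{itemize}
Contrapositively, non-CM forces $\alpha=1$. With this correction, or by citing Serre as the paper does, your proof coincides with the paper's.
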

\begin{proof}
 For supersingular $a_2$, the assertion follows because there is only one $\fpgmt$ for each trace and there exist elliptic curves $E/\Q_2$ with $a_2\in\{-2,0,2\}$. In the ordinary case, it suffices to produce two elliptic curves with $a_2=1$: a curve $E_s$ for which $V_2(E_s)$ is a sum of two characters ($\alpha=0$), and a curve $E_{ns}$ for which $V_2(E_{ns})$ is reducible and nonsplit ($\alpha=1$). Twisting $V_2(E)$ by the unramified quadratic character preserves whether it is split and transforms curves with $a_2=1$ into curves with $a_2=-1$. Thus the twists of $E_s$ and $E_{ns}$ provide the desired curves for $a_2=-1$. According to \cite[Chapter~IV, Appendix, \S A.2.4]{Serre1989} for $E_s$, we can choose an elliptic curve with $\CM$ by a field in which $2$ splits, and for $E_{ns}$ a curve without $\CM$. Such curves exist; for example, one can choose $$ \href{https://www.lmfdb.org/EllipticCurve/Q/49/a/4}{E_s}:y^2+xy=x^3-x^2-2x-1\quad\quad\quad \href{https://www.lmfdb.org/EllipticCurve/Q/3897/c/1}{E_{ns}}:y^2+xy=x^3-x^2-27$$
 \end{proof}

\section{Semistability defect $e=2$}

One might expect the case $e=2$ to be more difficult when $p=2$. However, the
difficulties in the wild cases arise from the existence of several extensions over which the
curve attains good reduction and from noncyclic Galois groups,
such as $C_3\rtimes C_4$ for $p=3$ or $Q_8$ for $p=2$. When
$e=2$ and $p=2$, the Galois groups are always $C_2$, and the fields of good
reduction can be chosen from $\Q_2(\sqrt{2}),\Q_2(\sqrt{-1})$ and $\Q_2(\sqrt{-2})$.
These cases therefore reduce to twists of the crystalline cases.

\begin{lemma}\label{L: D eta}
    Let $\Q_4=\Q_2(\sqrt{5})$ be the unramified quadratic extension, let $\eta$ be the associated quadratic character and let $D_{\eta}:=D^*_{pcris}(\eta)$. Then:

    \begin{itemize}
        \item $D_{\eta}=\Q_2e_1$
        \item $\varphi=-Id$
        \item $\Fil^0D_{K}=e_1 K$ and $\Fil^1D_{K}=0$
    \end{itemize} 
\end{lemma}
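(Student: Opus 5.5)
The lemma describes $D^*_{pcris}(\eta)$ for the unramified quadratic character $\eta$ of $G_{\Q_2}$. Since $\eta$ is unramified, it is crystalline over $\Q_2$, so $D_{pcris}^*(\eta)$ can be represented over $K=\Q_2$. The plan is to compute $D_{cris,\Q_2}^*(\eta)=\Hom_{\Q_2[G_{\Q_2}]}(\eta,B_{cris})$ directly. Equivalently, we compute $D_{cris,\Q_2}(\eta^{\vee})=D_{cris,\Q_2}(\eta)$, using $\eta^{-1}=\eta$. It is one-dimensional over $K_0=\Q_2$ because $\eta$ is crystalline, so we may write $D_\eta=\Q_2e_1$.

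To find $\varphi$, use periods in $\widehat{\Q_2^{un}}\subset B_{cris}$, on which $\varphi$ acts as the arithmetic Frobenius $\sigma$. Choose $\lambda\in\Z_4^\times$ with $\sigma(\lambda)=-\lambda$, for instance $\lambda=\sqrt{5}$. Then every $g\in G_{\Q_2}$ satisfies $g(\lambda)=\eta(g)\lambda$, since $g$ fixes $\sqrt5$ exactly when its image in $\Gal(\Q_4/\Q_2)$ is trivial. It follows that $v\mapsto \lambda\,x$, where $v$ is a generator of the line $\eta$ and $x$ is its coordinate, is a $G_{\Q_2}$-equivariant map $\eta\to B_{cris}$. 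Taking $e_1$ to be this map, we get $\varphi(e_1)=\sigma(\lambda)\cdot(\,)=-e_1$, so $\varphi=-\Id$. As a consistency check, $t_N=v_2(-1)=0$.

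For the filtration, note that $\lambda\in\widehat{\Q_2^{un}}\subset \Fil^0B_{dR}$ and $\lambda\notin\Fil^1B_{dR}$, since it is a nonzero element of a field contained in $B_{dR}^+$ that maps injectively to $\mathbb C_2$. Hence $\Fil^0D_K=e_1K$ and $\Fil^1D_K=0$. The same conclusion also follows abstractly: the Hodge--Tate weight of an unramified character is $0$, and weak admissibility forces $t_H=t_N=0$.

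The only delicate point is the sign and duality bookkeeping. One has to check that the contravariant convention of the paper, $\varphi_{D^\vee}(f)=\sigma\circ f\circ\varphi^{-1}$, applied to the covariant module $D_{cris}(\eta)$ (where $\varphi=-1$ as well) still gives $\varphi=-\Id$. This holds because $(-1)^{-1}=-1$, and more generally because $\eta=\eta^{-1}$. Beyond that, the argument is routine.
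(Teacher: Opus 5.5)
Your proof is correct, but for the key claim $\varphi=-\Id$ it takes a different route from the paper.

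The paper argues formally. Since $\eta^2$ is trivial, $D_\eta\otimes D_\eta\simeq\Q_2\{0\}$, so the scalar by which $\varphi$ acts on the line satisfies $\varphi^2=1$. It then excludes $\varphi=\Id$ because, together with the weight-$0$ filtration forced by the Hodge--Tate weight, this would give $D_\eta\simeq D^*_{pcris}(\mathbf{1})$, and hence $\eta$ trivial by the antiequivalence.

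You instead write down the generator $e_1\colon v\mapsto\sqrt5\in\Q_4\subset B_{cris}$ of $\Hom_{\Q_2[G_{\Q_2}]}(\eta,B_{cris})$. You read off $\varphi(e_1)=-e_1$ from $\sigma(\sqrt5)=-\sqrt5$, and the filtration from $\sqrt5\in\Fil^0B_{dR}\setminus\Fil^1B_{dR}$.

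What each approach buys:
\begin{itemize}
\item Your version is more concrete. It identifies the period and uses only the definition of $D^*_{cris}$ and the Frobenius on $\widehat{\Q_2^{un}}\subset B_{cris}$, with no appeal to tensor compatibility or full faithfulness. It also applies verbatim to any unramified character, where $\varphi$ acts by the character's value at the arithmetic Frobenius.
\item The paper's version is shorter and purely structural. It needs only that $\eta$ is a nontrivial quadratic crystalline character of weight $0$, but it detects the sign indirectly rather than computing it.
\end{itemize}

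Your remark on the duality bookkeeping is right and harmless: since $\eta=\eta^{-1}$, the covariant and contravariant computations agree.
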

\begin{proof}
    The unramified character $\eta$ is crystalline, so $D_{pcris}^{*}(\eta)$ can be represented by a filtered $(\varphi,\Gal(\Q_2/\Q_2))$-module, i.e., by an object of $MF_{\Q_2}^{\varphi,w.a.}$. The filtration follows from its Hodge--Tate weight $0$. The equality $\varphi=-Id$ follows from $D_{\eta}\otimes D_{\eta}=\Q_p\{0\}$, which gives $\varphi^2=1$, together with the fact that $\varphi=Id$ would imply that $\eta$ is trivial.
 
\end{proof}

\begin{lemma}\label{L: D varepsilon}
    Let $K=\Q_2(\sqrt{d})$ be a ramified quadratic extension, let $\varepsilon_d$ be the associated quadratic character, let $\langle x\rangle=\Gal(K/\Q_2)$ and let $D_{\varepsilon_d}:=D^*_{pcris}(\varepsilon_d)$. Then:

    \begin{itemize}
        \item $D_{\varepsilon_d}=\Q_2e_1$
        \item $\varphi=Id$
        \item $x=-Id$
        \item $\Fil^0D_{K}=e_1 K$ and $\Fil^1D_{K}=0$
    \end{itemize}
\end{lemma}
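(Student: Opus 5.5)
The plan is to compute $D^*_{pcris}(\varepsilon_d)=D^*_{cris,K}(\varepsilon_d)=\Hom_{\Q_2[G_K]}(\varepsilon_d,B_{cris})$ directly, mirroring the proof of Lemma~\ref{L: D eta}. Since $K=\Q_2(\sqrt{d})$ is totally ramified, $K_0=\Q_2$. Because $\varepsilon_d$ becomes trivial on $G_K$, this space is $\Hom_{\Q_2}(\varepsilon_d,B_{cris}^{G_K})$, which is one-dimensional over $K_0=\Q_2$. This gives the first point, with $e_1$ the map sending a chosen basis vector $v$ of $\varepsilon_d$ to $1\in B_{cris}$.

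For Frobenius, note that $\varphi$ acts on $B_{cris}$ and fixes $1$. Hence $\varphi(e_1)=\varphi\circ e_1=e_1$, that is, $\varphi=\Id$. Alternatively, I could argue as in Lemma~\ref{L: D eta}. From $D_{\varepsilon_d}\otimes D_{\varepsilon_d}=D^*_{pcris}(\varepsilon_d^2)=\Q_2\{0\}$ we get $\varphi^2=\Id$. Restricted to $G_K$, the representation is the trivial crystalline character, so $D^*_{cris,K}$ of it is $\Q_2\{0\}$ as a filtered $\varphi$-module, and $\varphi=\Id$ follows without any sign ambiguity.

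For the Galois action, $x\in\Gal(K/\Q_2)$ acts on $f\in\Hom(\varepsilon_d,B_{cris})$ by $(xf)(v)=\tilde x f(\tilde x^{-1}v)$ for a lift $\tilde x\in G_{\Q_2}$. Any such lift satisfies $\varepsilon_d(\tilde x)=-1$, so
$$(xe_1)(v)=\tilde x\bigl(e_1(-v)\bigr)=-\tilde x(1)=-1,$$
which gives $x=-\Id$. This is the only step with any content: I must check that the descent action is the $G_{\Q_2}$-action on $\Hom(V,B_{cris})$ through $\Gal(K/\Q_2)$, which follows from the definitions in \S2. The sign does not depend on whether one uses $g$ or $g^{-1}$, since $x$ has order $2$.

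Finally, the filtration comes from the Hodge--Tate weight $0$ of a finite-order character. Under the contravariant convention of \S2 this gives $\Fil^0 D_K=D_K=e_1K$ and $\Fil^1D_K=0$. Concretely, $1\in\Fil^0B_{dR}\setminus\Fil^1B_{dR}$. As a consistency check, weak admissibility holds: $t_N=t_H=0$, and a one-dimensional module has no nontrivial subobjects.
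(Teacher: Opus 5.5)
Your proof is correct. It follows the paper for the first, second and fourth points but reaches the key point $x=-\Id$ by a different route.

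You unwind $D^*_{cris,K}(\varepsilon_d)=\Hom_{\Q_2[G_K]}(\varepsilon_d,B_{cris})$ and compute directly that $(\tilde x e_1)(v)=\tilde x\bigl(e_1(\varepsilon_d(\tilde x)^{-1}v)\bigr)=-1$.

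The paper argues structurally instead. Since $x$ is inertia of order $2$ acting linearly, $x=\pm\Id$. A crystalline character of Hodge--Tate weight $0$ is unramified, so $\varepsilon_d$ is not crystalline. Hence $D^*_{cris,K}(\varepsilon_d)^{\Gal(K/\Q_2)}$, which computes the $\Q_2$-crystalline module, is $0$. A nonzero invariant vector is therefore impossible, which rules out $x=\Id$.

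Your computation is more elementary and more informative. It works verbatim for any finite-order character $\chi$ trivial on $G_K$, giving $g(e_1)=\chi(g)^{-1}e_1$, which exhibits the contragredient coming from the contravariant convention. The price is that you must track the exact $G_{\Q_2}$-action on $\Hom(V,B_{cris})$; as you note, that is harmless here because $x^2=1$.

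The paper's argument never touches elements of $B_{cris}$ and uses only formal properties of $D_{cris}$: Galois invariants, the dimension criterion for crystallinity, and weight-$0$ crystalline characters being unramified. However, it pins down the action only because the order-$2$ constraint leaves just two candidates.
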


\begin{proof}
    Since $\varepsilon_d\vert_{G_K}$ is trivial, it follows that $D_{cris,K}^{*}(\varepsilon_{d})=\Q_2$ with $\varphi=Id$, $\Fil^{0}(D_K)=D_K$ and $\Fil^{1}(D_K)=0$. In order to compute $D_{pcris}^{*}(\varepsilon_{d})$, we need to furnish $D_{cris,K}^{*}(\varepsilon_{d})$ with a semilinear action of $\Gal(K/\Q_p)=\langle x\rangle$. Since $x$ is an element of inertia, it acts linearly on $D$; since $ord(x)=2$, either $x(e_1)=e_1$ or $x(e_1)=-e_1$. Because $\varepsilon_{d}$ is finite, its Hodge--Tate weight is $0$. Since the crystalline characters of weight $0$ are the unramified characters, it follows that $\varepsilon_{d}$ is not crystalline. Since $\dim_{\Q_p}(D_{cris,\Q_p}(\varepsilon_{d}))\leq \dim_{\Q_p}(\varepsilon_{d})=1$ with equality if and only if $\varepsilon_{d}$ is crystalline, it follows that $D_{cris,\Q_p}(\varepsilon_{d})=0$. Finally, we know that $D_{cris,K}^{*}(\varepsilon_{d})^{\Gal(K/\Q_2)}=D_{cris,\Q_p}=0$; thus the action of $x$ on $D$ cannot have any fixed vector. It follows that $x(e_1)=-e_1$, which completes the proof.

\end{proof}

\begin{proposition}\label{P: e=2}
    Let $E/\Q_2$ be an elliptic curve with semistability defect $e=2$ that attains good reduction over a quadratic extension $K/\Q_2$. Let $\varepsilon_{d}$ be the associated quadratic character, let $\langle x\rangle=\Gal(K/\Q_2)$ and let $D_{\varepsilon_{d}}=D_{pcris}^{*}(\varepsilon_{d})$.

    If $E$ has potentially ordinary reduction, then
    $D:=D_{pcris}^{*}(V_2(E))=D_c(1,a_2,\alpha)\otimes D_{\varepsilon_{d}}$ for some uniquely determined
    $\alpha\in\{0,1\}$; more explicitly:

    \begin{itemize}
        \item $D=\Q_2e_1\oplus\Q_2e_2$
        \item $M(\varphi)=\begin{pmatrix}
                      u & 0       \\
                      0 & 2u^{-1} \\
                  \end{pmatrix}$
        \item $M(x)=\begin{pmatrix}
                      -1 & 0  \\
                      0  & -1 \\
                  \end{pmatrix}$
        \item $\Fil^1D_K=(\alpha\tenso e_1 +1\tenso e_2)K$
    \end{itemize}

    If $E$ has potentially supersingular reduction, then
    $D:=D_{pcris}^{*}(V_2(E))=D_c(1,a_2)\otimes D_{\varepsilon_{d}}$; more explicitly:
    \begin{itemize}
        \item $D=\Q_2e_1\oplus\Q_2e_2$
        \item $M(\varphi)=\begin{pmatrix}
                      0 & -2  \\
                      1 & a_2 \\
                  \end{pmatrix}$
        \item $M(x)=\begin{pmatrix}
                      -1 & 0  \\
                      0  & -1 \\
                  \end{pmatrix}$
        \item $\Fil^1D_{K}:=e_1\cdot K$
    \end{itemize}

\end{proposition}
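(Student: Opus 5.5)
The plan is to reduce to the crystalline case by a quadratic twist. Let $\varepsilon_d$ be the quadratic character cutting out $K=\Q_2(\sqrt{d})$, with $d\in\{2,-2,-1\}$ as in Table~\ref{Ta: InTypes}, and let $E^{(d)}$ be the quadratic twist of $E$ by $d$. Then $V_2(E^{(d)})\simeq V_2(E)\otimes\varepsilon_d$. By the Néron--Ogg--Shafarevich criterion ($\ell\neq 2$), $E$ has good reduction over $K$, so $\rho_{E,\ell^\infty}\vert_{I_{\Q_2}}$ factors through $I_{K/\Q_2}\simeq C_2$. Its inertia type is $\varepsilon_d\oplus\varepsilon_d$ restricted to inertia, because $\det=1$ on inertia and the action is faithful (Lemma~\ref{L: curves satisfy A}, Lemma~\ref{L: eigenvalues} with $m=2$ gives trace $-2$, so the nontrivial element acts by $-\Id$). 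Twisting by $\varepsilon_d$ therefore kills the inertia action, and $E^{(d)}$ has good reduction over $\Q_2$.

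Its reduction type and trace agree with those of $E$ in the sense of the statement. Here $a_2$ means the trace of the reduction over $K$, which is totally ramified with residue field $\F_2$. Over $K$ the twist becomes trivial, so $E\simeq E^{(d)}$ over $K$ and the reductions coincide.

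Next apply Propositions~\ref{P: e=1ordinary} and~\ref{P: e=1supersingular} to $E^{(d)}$. This gives $D_{pcris}^*(V_2(E^{(d)}))\simeq D_c(1,a_2,\alpha)$ with $\alpha$ unique, respectively $D_c(1,a_2)$. Since $\varepsilon_d^2=1$, we have $V_2(E)\simeq V_2(E^{(d)})\otimes\varepsilon_d$. Because $D_{pcris}^*$ is compatible with tensor products (computed over the common field $K$ via the base change recipe of \S\ref{S:Background}), we obtain $D_{pcris}^*(V_2(E))\simeq D_c(\dots)\otimes D_{\varepsilon_d}$.

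The explicit description then follows by tensoring with Lemma~\ref{L: D varepsilon}.
\begin{itemize}
\item Since $K_0=\Q_2$, the base change of $D_c(\dots)$ to $K$ is the same $\Q_2$-space with trivial $x$-action.
\item Tensoring with $D_{\varepsilon_d}$ multiplies the $x$-action by $-1$ and leaves $\varphi$ unchanged, since $\varphi=\Id$ on $D_{\varepsilon_d}$.
\item The filtration is shifted by weight $0$, so $\Fil^1D_K=\Fil^1(D_c)\otimes_{\Q_2}K$. This is the line $(\alpha\otimes e_1+1\otimes e_2)K$, respectively $e_1K$.
\end{itemize}

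The uniqueness of $\alpha$ transfers because $-\otimes D_{\varepsilon_d}$ is an autoequivalence, with inverse itself.

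The main obstacle is justifying that the twist has good reduction over $\Q_2$ and that the trace is the same. The first needs the inertia action to be exactly $-\Id$, i.e.\ the scalar $\varepsilon_d$ type rather than some other $C_2$ action. The second needs care because twisting by a ramified character could a priori change the meaning of $a_2$. I would handle both via Lemma~\ref{L: curves satisfy A} together with Lemma~\ref{L: eigenvalues}: an order-$2$ element of determinant $1$ acting faithfully has trace $-2$. Since it is diagonalizable, it equals $-\Id$. Then the $D_0=D^{\langle t\rangle}$ identification, with $t=1$ here, gives the same characteristic polynomial of $\varphi$ for $E$ and $E^{(d)}$.
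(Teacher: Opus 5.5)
Your proposal is correct and follows the paper's route: twist by $\varepsilon_d$ to reach a curve with good reduction over $\Q_2$, apply Propositions~\ref{P: e=1ordinary} and~\ref{P: e=1supersingular}, and use that $D_{pcris}^{*}$ is a tensor antiequivalence. You also justify two steps the paper leaves implicit: the ``well known'' fact that the inertia generator acts by $-\Id$, so the twist has good reduction with the same $a_2$, and the explicit tensor computation with Lemma~\ref{L: D varepsilon}.
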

\begin{proof}
    Let $E$ be an elliptic curve with potentially good reduction and semistability defect $e=2$. It is well known that there exists a ramified quadratic extension $\Q_2(\sqrt{d})/\Q_2$ such that $E_d$, the quadratic twist of $E$ by $d$, has good reduction. Thus $D_{pcris}^{*}(V_2(E_d))$ must be among the modules already computed. Moreover, since $V_2(E_d)\otimes \varepsilon_{d} \simeq V_2(E)$ and $D_{pcris}^{*}$ is a tensor antiequivalence, we have $D_{pcris}^{*}(V_2(E))=D_{pcris}^{*}(V_2(E_d))\otimes D_{pcris}^{*}(\varepsilon_{d})$. The result then follows from Propositions~\ref{P: e=1ordinary} and~\ref{P: e=1supersingular}.

\end{proof}

\section{Semistability defect $e=3$}

For elliptic curves $E/\Q_2$, there is only one inertia type $\tau$ with $e=3$; see Lemma~\ref{L: InTypes}. An
elliptic curve $E$ with this type attains good reduction over the field $L=\Q_2(\sqrt[3]{2})$ whose Galois closure is $K=\Q_2(\sqrt[3]{2},\zeta_3)$ with
$\Gal(K/\Q_2)\simeq S_3\simeq D_3$. The maximal unramified subextension of $K/\Q_2$ is
the field $K_0=\Q_4=\Q_2(\zeta_3)$. We have the following presentation
$\Gal(K/\Q_2)=\langle x,t~|~x^3=1,t^2=1,xt=tx^{-1}\rangle$, where
$$x(\sqrt[3]{2})=\zeta_3\sqrt[3]{2},\quad x(\zeta_3)=\zeta_3,\quad
    t(\sqrt[3]{2})=\sqrt[3]{2},\quad t(\zeta_3)=\zeta_3^2.$$ In particular, the
fixed field of $x$ is $K_0$ and the fixed field of $t$ is $\Q_2(\sqrt[3]{2})$.
Thus $I_{K/\Q_2}=\langle x\rangle$.

We fix $\sqrt{-3}=2\zeta_3+1$ and a square root $\sqrt5\in\Q_4$. Throughout the paper, $\sqrt{-15}$ means the product $\sqrt5\sqrt{-3}$, which lies in $\Q_2$. For a projective parameter
$\alpha=[A:B]\in\PP^1(\Q_2)$, the notation
$K(\alpha e_1+c e_2)$ means $K(Ae_1+Bc e_2)$; in particular,
$\alpha=\infty$ means the line $Ke_1$.

\begin{defn}\label{D: D3}
    Let $\zeta_3$ be as above. We define the $\fpgmt~D_{pc}(3,0,\alpha)$ as follows:

    \begin{itemize}
        \item $D_{pc}(3,0,\alpha):=\Q_4e_1\oplus\Q_4e_2$
        \item $M(\varphi)=\begin{pmatrix}
                      0 & -2 \\
                      1 & 0  \\
                  \end{pmatrix}$
        \item $M(x)=\begin{pmatrix}
                      \zeta_3 & 0         \\
                      0       & \zeta_3^2 \\
                  \end{pmatrix}$
        \item $M(t)=\begin{pmatrix}
                      1 & 0 \\
                      0 & 1 \\
                  \end{pmatrix}$
        \item $\Fil^1D_{K}:=(\alpha\tenso e_1+(\sqrt[3]{2})^{2}\tenso e_2)K$ with $\alpha\in\PP^1(\ma Q_2)$
    \end{itemize}
\end{defn}

\begin{proposition}\label{P: classification D3}
    Let $L=\Q_2(\sqrt[3]{2})$ and $K=\Q_2(\sqrt[3]{2},\zeta_3)$. Let $D$ be a $\fpgmt$ satisfying ${\bf (A.2)}$ for $K/L$. Then $D\simeq D_{pc}(3,0,\alpha)$ as $\upgmt$s. If, moreover, $D$ has Hodge--Tate weights $(0,1)$, then $D\simeq D_{pc}(3,0,\alpha)$ as $\fpgmt$s for a unique $\alpha$.
\end{proposition}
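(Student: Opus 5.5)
We begin with the basis from Lemma~\ref{L: adapted basis} and Lemma~\ref{L: a_2=0}: a $\Q_2$-basis $e_1,e_2$ of $D_0=D^{\langle t\rangle}$ in which $M(\varphi)=\tbt{0}{-2}{1}{0}$ and $M(t)=\Id$. It then remains to determine $M(x)$, where $x$ generates $I_{K/\Q_2}\simeq C_3$. The element $x$ acts $K_0$-linearly. By Lemma~\ref{L: eigenvalues} it has trace $\zeta_3+\zeta_3^{-1}=-1$, determinant $1$, and eigenvalues $\zeta_3,\zeta_3^2\in\Q_4=K_0$. Write $X=M(x)$.

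The relations $xt=tx^{-1}$ and $\varphi x=x\varphi$ translate into matrix equations. Since $M(t)=\Id$, the first reads $\sigma(X)=X^{-1}$. The second reads $M(\varphi)\sigma(X)=XM(\varphi)$, which combined with the first gives $M(\varphi)X^{-1}=XM(\varphi)$, i.e.\ $XM(\varphi)X=M(\varphi)$.

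We then solve these equations with $\operatorname{Tr}X=-1$ and $\det X=1$. My plan is to change basis only by matrices $P\in \GL_2(\Q_2)$ commuting with $M(\varphi)$, that is $P=c_0\Id+c_1M(\varphi)$. Such a change preserves both $M(\varphi)$ and $M(t)=\Id$, and the aim is to move an eigenvector of $X$ onto $e_1$.

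A cleaner alternative is to construct the basis directly. Take $v$ in the $\zeta_3$-eigenspace of $x$. Because $x$ commutes with $\varphi$ semilinearly, $x\varphi(v)=\varphi(xv)=\sigma(\zeta_3)\varphi(v)=\zeta_3^2\varphi(v)$, so $\varphi(v)$ lies in the $\zeta_3^2$-eigenspace. Also $\varphi^2=-2$, since $M(\varphi)\sigma(M(\varphi))=-2\Id$. Hence $(v,\varphi v)$ is a basis with $M(\varphi)=\tbt{0}{-2}{1}{0}$ and $M(x)=\operatorname{diag}(\zeta_3,\zeta_3^2)$. We still have to adjust $v$ so that $t$ fixes $v$. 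The relation $txt^{-1}=x^{-1}$ shows that $t$ maps the $\zeta_3$-eigenspace to itself: $x(tv)=t x^{-1}v=t(\zeta_3^{2}v)=\zeta_3\, tv$. So $t$ is a $\sigma$-semilinear involution of a one-dimensional $\Q_4$-space. By Hilbert 90 it has a nonzero fixed vector $v$. With this choice $\varphi v$ is also $t$-fixed, because $t$ and $\varphi$ commute, and therefore $M(t)=\Id$. This gives the isomorphism of unfiltered modules with $D_{pc}(3,0,\alpha)$.

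For the filtration, Lemma~\ref{L: invariant subspaces} shows that the admissible filtrations are exactly the $\Gal(K/\Q_2)$-stable lines $\Fil^1D_K$, and these are parametrized by $\PP^1$ of the $\Q_2$-space $(D\otimes_{K_0}K)^{\Gal(K/\Q_2)}$. I will check that $e_1\otimes 1$ and $e_2\otimes\sqrt[3]{2}^{\,2}$ span this invariant space. Under $t$ both are fixed. Under $x$, $e_1\mapsto \zeta_3e_1$, which does not look invariant. So I expect the correct invariant vectors to be of the form $e_1\otimes\lambda_1+e_2\otimes\lambda_2$ with the $\lambda_i$ chosen so that the roots of unity cancel. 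Since $x(\sqrt[3]{2}^{\,2})=\zeta_3^2\sqrt[3]{2}^{\,2}$, the element $e_2\otimes\sqrt[3]{2}^{\,2}$ picks up the factor $\zeta_3^2\cdot\zeta_3^2=\zeta_3$. The spanning claim therefore needs care, and I would verify it by direct computation. The expected outcome is that the stable lines are exactly $K(\alpha e_1+\sqrt[3]{2}^{\,2}e_2)$ with $\alpha\in\PP^1(\Q_2)$, since this line is $x$-stable (both terms scale by $\zeta_3$) and $t$-stable (both are $t$-fixed).

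Uniqueness of $\alpha$ follows from Lemma~\ref{L: unfiltered automorphism}. Here $M(x)$ is nonscalar and diagonal, and $\cl L(\Id,M(x))$, the diagonal matrices, meets $\cl L(\Id,M(\varphi))$ only in the scalars. Every automorphism is therefore scalar and fixes each line, so distinct $\alpha$ give nonisomorphic modules. The main obstacle is the Galois-descent bookkeeping needed to show that these lines exhaust the stable lines.
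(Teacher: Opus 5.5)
Your treatment of the unfiltered module is correct, and cleaner than the paper's. The paper parametrizes $M(x)$ by $(u_1,u_2)$ and writes down an explicit change of basis $B$ found by a Gr\"obner computation. You instead take a $t$-fixed $\zeta_3$-eigenvector $v$, obtained by Hilbert 90 on the $t$-stable eigenline, and pass to the basis $(v,\varphi v)$, which produces $M(\varphi)$, $M(x)$ and $M(t)$ all at once. Your uniqueness argument via Lemma~\ref{L: unfiltered automorphism} is fine. So is your check that each line $K(\alpha e_1+\sqrt[3]{2}^{\,2}e_2)$ is $\Gal(K/\Q_2)$-stable.

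The gap is the exhaustion step, which you flag but do not close. You never show that every $\Gal(K/\Q_2)$-stable line in $D_K$ has the form $K(\alpha e_1+\sqrt[3]{2}^{\,2}e_2)$. Without this you cannot conclude that the given filtration of $D$ is that of some $D_{pc}(3,0,\alpha)$. Exhibiting a $\PP^1(\Q_2)$-family of stable lines inside a set abstractly parametrized by $\PP^1(\Q_2)$ does not settle it either.

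Your proposed route is also wrong as stated. The vectors $e_1\otimes 1$ and $e_2\otimes\sqrt[3]{2}^{\,2}$ do not lie in $(D_K)^{\Gal(K/\Q_2)}$, since $x$ multiplies both by $\zeta_3$.

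The fix is short. The vectors $e_1\otimes\sqrt[3]{2}^{\,2}$ and $e_2\otimes\sqrt[3]{2}$ are fixed by both $x$ and $t$, because $x(\sqrt[3]{2})=\zeta_3\sqrt[3]{2}$ and $t(\sqrt[3]{2})=\sqrt[3]{2}$. They are $K$-independent, so they form a $\Q_2$-basis of the $2$-dimensional invariant space. By Galois descent, every stable line $\ell$ equals $\ell^{\Gal(K/\Q_2)}\otimes_{\Q_2}K$. Hence $\ell=K(a\,e_1\otimes\sqrt[3]{2}^{\,2}+b\,e_2\otimes\sqrt[3]{2})=K(2a\,e_1+b\sqrt[3]{2}^{\,2}e_2)$ for some $[a:b]\in\PP^1(\Q_2)$. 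Equivalently, your two vectors span $\sqrt[3]{2}^{-2}(D_K)^{\Gal(K/\Q_2)}$, which cuts out the same $K$-lines.

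Alternatively, follow the paper. Write any stable line other than $Ke_1$ as $K(e_1\otimes\beta+e_2\otimes 1)$. Then $t$-stability forces $\beta\in\Q_2(\sqrt[3]{2})$, and $x$-stability forces $x(\beta)=\zeta_3\beta$. Therefore $\sqrt[3]{2}^{\,2}\beta\in\Q_4\cap\Q_2(\sqrt[3]{2})=\Q_2$.
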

\begin{proof}
    We work in the basis provided by Lemma~\ref{L: adapted basis}. Since $\zeta_3+\zeta_3^{-1}=-1$, Lemma~\ref{L: eigenvalues} shows that the minimal polynomial of $x$ acting on $D$ is $X^2+X+1$. In particular, the matrix describing the action of $x$ on $D$ has the form $$M(x)=\begin{pmatrix}
            \lambda & \mu'       \\
            \mu     & -1-\lambda \\
        \end{pmatrix}$$.

        By Lemma~\ref{L: a_2=0}, we have $a_2=0$. The condition $xt=tx^{-1}$ gives the equations $$
        t(\lambda)=-1-\lambda\quad, t(\mu)=-\mu\quad, t(\mu')=-\mu'\quad,
        t(-1-\lambda)=\lambda$$
        and the condition $\varphi x=x\varphi$ gives equations
        $$
 t(\lambda)=-1-\lambda,\qquad
\mu'=-2t(\mu),\qquad
t(\mu')=-2\mu.$$
    The equation $t(\lambda)=-1-\lambda$ implies
    $\lambda=\frac{-1}{2}+u_1\sqrt{-3}$ with $u_1\in\Q_2$. Similarly, $t(\mu)=-\mu$
    implies $\mu=u_2\sqrt{-3}$ with $u_2\in\Q_2$. From the equations for $\varphi$ we obtain $\mu'=2\mu=2u_2\sqrt{-3}$. Then, the condition $\det(x)=1$
    implies $u_2=\pm\frac{1}{2}\sqrt{\frac{1-4u_1^2}{2}}$. Thus the matrix of $x$ is of the form $$M(x)=\begin{pmatrix}
            \frac{-1}{2}+u_1\sqrt{-3} & 2u_2\sqrt{-3}             \\
            u_2\sqrt{-3}              & \frac{-1}{2}-u_1\sqrt{-3} \\
        \end{pmatrix}$$

    We seek a change of basis that commutes with $\varphi$ and
    sends this matrix to the one appearing in $D_{pc}(3,0,\alpha)$. The equations defining
    such a matrix can be solved using a Gröbner basis. The change of basis is given by the
    matrix $$ B=\begin{pmatrix}
            (u_1+\frac{1}{2}) & -2u_2               \\
            u_2             & (u_1+\frac{1}{2}) \\
        \end{pmatrix}~\text{if}~(u_1,u_2)\neq(-\frac{1}{2},0)\quad\text{and}\quad B=\tbt{0}{-2}{1}{0}~\text{otherwise}$$

    The two possible solutions for $u_2$ lead to
    isomorphic objects. See~\cite{MAGMAFiles} for a {\tt Magma} script that checks all
    these computations. This proves the unfiltered isomorphism.

    It only remains to determine the $\Gal(K/\Q_2)$-stable line corresponding to
    $\Fil^1(D_K)$. By Lemma~\ref{L: invariant subspaces}, any
    $\Gal(K/\Q_2)$-stable line gives rise to an admissible filtration. The line spanned by $1\otimes e_1$ is invariant under $\Gal(K/\Q_2)$. The remaining invariant lines can therefore be represented by an element of the
    form $\beta\otimes e_1+1\otimes e_2$ with $\beta\in K$. Then, the condition
    $t(v)=\gamma v$ reads $$ t(v)=t(\beta)\otimes e_1+1\otimes e_2=\gamma
        v=\gamma\beta\otimes e_1+\gamma\otimes e_2\iff t(\beta)=\beta\iff \beta\in
        K^{\langle t\rangle}= \Q_2(\sqrt[3]{2}),$$ and similarly the condition
    $x(v)=\gamma v$ reads $$ x(v)=\zeta_3x(\beta)\otimes e_1+\zeta_3^2 \otimes
        e_2=\gamma v=\gamma\beta\otimes e_1+\gamma\otimes e_2\iff
        x(\beta)=\zeta_{3}\beta.$$

    This last equation implies that
    $x((\sqrt[3]{2})^2\beta)=(\sqrt[3]{2}\zeta_3)^2x(\beta)=(\sqrt[3]{2})^2\beta$; thus $(\sqrt[3]{2})^2\beta\in K^{\langle x \rangle}=\Q_2(\zeta_3)$. Since
    $(\sqrt[3]{2})^2\beta\in \Q_2(\sqrt[3]{2})$, we have $(\sqrt[3]{2})^2\beta\in\Q_2$, so $\beta=\alpha(\sqrt[3]{2})^{-2}$ for some $\alpha\in \Q_2$. It follows that
    any $\Gal(K/\Q_2)$-stable line is spanned by $\alpha\tenso e_1+(\sqrt[3]{2})^{2}\tenso e_2$ with $\alpha\in\PP(\ma Q_2)$. It remains to show that, for $\alpha\neq\beta$, $D_{pc}(3,0,\alpha)$ and $D_{pc}(3,0,\beta)$ are not isomorphic. An isomorphism $f$ between $D_{pc}(3,0,\alpha)$ and $D_{pc}(3,0,\beta)$ is, in particular, an unfiltered automorphism of $D_{pc}(3,0,\alpha)$. By Lemma~\ref{L: unfiltered automorphism}, any such $f$ is scalar and can therefore be a filtered isomorphism only if $\alpha=\beta$.
    \end{proof}

    \begin{corollary}\label{Cor: classification for D3}
    Let $E/\Q_2$ have inertia type associated to $\Q_2(\sqrt[3]{2})$. Then $D_{pcris}^{*}(V_2(E))\simeq D_{pc}(3,0,\alpha)$ for a unique $\alpha$.
\end{corollary}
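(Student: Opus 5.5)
The plan is to reduce the corollary to Proposition~\ref{P: classification D3} by checking its two hypotheses for $D:=D_{pcris}^{*}(V_2(E))$. By Lemma~\ref{L: InTypes}, an elliptic curve with inertia type associated to $L=\Q_2(\sqrt[3]{2})$ has good reduction over $L$. Hence $V_2(E)$ is crystalline over the Galois closure $K=\Q_2(\sqrt[3]{2},\zeta_3)$. Following the convention of \S2, I will represent $D$ by the $\fpgmt$ $D_{cris,K}^{*}(V_2(E))$. Here $K_0=\Q_4$, so $[K_0:\Q_2]=2$. Moreover $K=L\cdot K_0$, so Lemma~\ref{L: t exists} applies with $t$ the generator of $\Gal(K/L)$ described at the start of this section.

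Next I invoke Lemma~\ref{L: curves satisfy A}, which shows that $D$ satisfies condition ${\bf (A)}$ for $K/L$. Since $[K_0:\Q_2]=2$, this is in fact condition ${\bf (A.2)}$. Weak admissibility holds because $D$ comes from a potentially crystalline representation. By the last item of Lemma~\ref{L: Conditions on DVpE}, $D$ has Hodge--Tate weights $(0,1)$ in the contravariant convention, i.e.\ $\dim_K\Fil^0D_K=2$, $\dim_K\Fil^1D_K=1$ and $\dim_K\Fil^2D_K=0$.

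With both hypotheses in place, Proposition~\ref{P: classification D3} gives $D\simeq D_{pc}(3,0,\alpha)$ as $\fpgmt$s for a unique $\alpha\in\PP^1(\Q_2)$. For the uniqueness I need that $D_{pcris}^{*}(V_2(E))$ determines the $\fpgmt$ over this fixed $K$ up to isomorphism. This holds because $D_{cris,K}^{*}$ is an (anti)equivalence onto the full subcategory of representations crystalline over $K$. Hence different $\alpha$, which give non-isomorphic filtered modules by the proposition, correspond to non-isomorphic representations.

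I expect no serious obstacle, since all the work is contained in Proposition~\ref{P: classification D3}. The only delicate point is bookkeeping: I must confirm that the $t$ used in the condition ${\bf (A.2)}$ check is the same $t$ used in the definition of $D_{pc}(3,0,\alpha)$, namely the one fixing $\sqrt[3]{2}$ and acting as $\sigma$ on $K_0$. This is what Lemma~\ref{L: t exists} provides.
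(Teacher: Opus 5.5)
Your proposal is correct and follows the paper's proof, which simply combines Lemma~\ref{L: curves satisfy A} (condition ${\bf (A)}$, hence ${\bf (A.2)}$ since $K_0=\Q_4$) with Proposition~\ref{P: classification D3}. The extra bookkeeping you supply (Hodge--Tate weights $(0,1)$ via Lemma~\ref{L: Conditions on DVpE}, the choice of $t$ from Lemma~\ref{L: t exists}, and uniqueness of $\alpha$ from the pairwise non-isomorphism in the proposition) is exactly what the paper leaves implicit.
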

\begin{proof}
    This follows directly from Lemma~\ref{L: curves satisfy A} and Proposition~\ref{P: classification D3}.  
\end{proof}

\section{Semistability defect $e=4$}

Let $E/\Q_2$ be an elliptic curve with semistability defect $e=4$. By
Lemma~\ref{L: InTypes}, there is an extension $K/\Q_2$ over which $E$ attains good
reduction and such that $\Gal(K/\Q_2)\simeq C_4$ or $\Gal(K/\Q_2)\simeq D_4$.
In both cases, $I_{K/\Q_2}\simeq C_4$.

\subsection{Dihedral case} Assume that $L=L_{4,i}^{na}$, so $\Gal(K/\Q_2)\simeq D_4$. We use the presentation
$\Gal(K/\Q_2)\simeq D_4=\langle x,t~|~ x^4=1,~t^2=1,~txt=x^{-1} \rangle$, where
$I_{K/\Q_2}=\langle x\rangle$. With this description, $K^{\langle x
            \rangle}=\Q_4$ and we write $L=K^{\langle t \rangle}$.

\begin{defn}
    Let $K$ be one of the fields described above. We define $D_{pc}(4,0,i,\alpha)$ to be the following $\text{filtered}~(\varphi,\Gal(K/\Q_2))-\text{module}$:

    \begin{itemize}
        \item $D:=\Q_4e_1\oplus\Q_4e_2$
        \item $M(\varphi)=\begin{pmatrix}
                      0 & -2 \\
                      1 & 0  \\
                  \end{pmatrix}$
        \item $M(x)=\begin{pmatrix}
                      \sqrt{5}  & 2\sqrt{-3} \\
                      \sqrt{-3} & -\sqrt{5}  \\
                  \end{pmatrix}$
        \item $M(t)=\begin{pmatrix}
                      1 & 0 \\
                      0 & 1 \\
                  \end{pmatrix}$
        \item $\Fil^1D_{K}:=(\alpha\tenso e_1 +1\tenso e_2)K$ for $\alpha\in \Q_2(\sqrt{-2})$ such that $x(\alpha)=\frac{\alpha\sqrt{5}+2\sqrt{-3}}{\alpha\sqrt{-3}-\sqrt{5}}$
    \end{itemize}
\end{defn}

\begin{proposition}\label{P: classification D4}
    Let $L=L_{4,i}^{na}$ be an extension in Table~\ref{Ta: InTypes}. Let $D$ be a $\fpgmt$ satisfying ${\bf (A.2)}$ for $K/L$. Then $D\simeq D_{pc}(4,0,i,\alpha)$ as an $\upgmt$. If, moreover, $D$ has Hodge--Tate weights $(0,1)$, then $D\simeq D_{pc}(4,0,i,\alpha)$ as a $\fpgmt$ for a unique $\alpha$.
\end{proposition}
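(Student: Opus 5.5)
I will follow the template of Proposition~\ref{P: classification D3}. First I work in the basis of Lemma~\ref{L: adapted basis}, so $M(t)=\Id$. By Lemma~\ref{L: a_2=0} we have $a_2=0$, hence $M(\varphi)=\tbt{0}{-2}{1}{0}$. By Lemma~\ref{L: eigenvalues} applied to $x$ of order $4$, $\operatorname{Tr}(x)=\zeta_4+\zeta_4^{-1}=0$ and $\det x=1$, so
$$M(x)=\tbt{\lambda}{\mu'}{\mu}{-\lambda},\qquad \lambda^2+\mu\mu'=-1.$$
Since $t$ acts through $\sigma$ on $\Q_4$ and $M(t)=\Id$, the relation $txt=x^{-1}=-x$ (using $x^2=-\Id$) gives $\sigma(M(x))=-M(x)$. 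Hence $\lambda,\mu,\mu'\in\sqrt{-3}\,\Q_2$. Note that $\sqrt5$ and $\sqrt{-3}$ differ by the factor $\sqrt{-15}\in\Q_2$, so both lie in this space. The commutation $\varphi x=x\varphi$, written as $M(\varphi)\sigma(M(x))=M(x)M(\varphi)$, then forces $\mu'=2\mu$. So $M(x)$ is determined by $\lambda=u_1\sqrt{-3}$ and $\mu=u_2\sqrt{-3}$ with $u_1,u_2\in\Q_2$, subject to the conic $3u_1^2+6u_2^2=1$ coming from the determinant.

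Next I will look for a change of basis $B$ commuting with $\varphi$ and $t$. By the first part of Lemma~\ref{L: unfiltered automorphism}, any such $B$ has the form $B=c_0\Id+c_1M(\varphi)$ with $c_0,c_1\in\Q_2$. I want $B$ to conjugate the general $M(x)$ to the model matrix $\tbt{\sqrt5}{2\sqrt{-3}}{\sqrt{-3}}{-\sqrt5}$, which corresponds to the point $(u_1,u_2)=(\sqrt5/\sqrt{-3},1)$ on the conic. Conjugation by $c_0+c_1M(\varphi)$ acts on the $(u_1,u_2)$-plane like multiplication by $(c_0+c_1\sqrt{-2})/(c_0-c_1\sqrt{-2})$, i.e.\ as a rotation preserving the form $u_1^2+2u_2^2$. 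The conic is a torsor under the norm-one group of $\Q_2(\sqrt{-2})$, so transitivity reduces to a Hilbert 90 statement. I expect an explicit formula analogous to the $e=3$ case, of the shape
$$B=\tbt{u_1\sqrt{-3}+\sqrt5}{-2u_2\sqrt{-3}}{u_2\sqrt{-3}}{u_1\sqrt{-3}+\sqrt5}$$
up to $\Q_2$-rescaling. I would check this and the degenerate points, where it vanishes, with a Magma/Gröbner computation as in~\cite{MAGMAFiles}. This gives the unfiltered isomorphism with $D_{pc}(4,0,i,\alpha)$.

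For the filtration, Lemma~\ref{L: invariant subspaces} says every $\Gal(K/\Q_2)$-stable line in $D_K$ is admissible, and these lines form a $\PP^1(\Q_2)$. The line $Ke_1$ is not $x$-stable because $\mu\ne0$, so every stable line has the form $K(\alpha\otimes e_1+1\otimes e_2)$. Stability under $t$ forces $t(\alpha)=\alpha$, i.e.\ $\alpha\in L$. Stability under $x$ gives the Möbius relation $x(\alpha)=\frac{\alpha\sqrt5+2\sqrt{-3}}{\alpha\sqrt{-3}-\sqrt5}$ appearing in the definition.

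For uniqueness I apply the second part of Lemma~\ref{L: unfiltered automorphism}. The matrix $M(x)$ is not in the $K_0$-span of $\Id$ and $M(\varphi)$, because its diagonal entries are not equal. Hence every unfiltered automorphism is scalar, so distinct stable lines give nonisomorphic filtered modules.

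The main obstacle is the field-theoretic part. I need to pin down where the solutions $\alpha$ live: the definition claims $\alpha\in\Q_2(\sqrt{-2})$, which must be reconciled with $\alpha\in L$ and depends on $i$. I also need to show that the solution set of the Möbius relation is nonempty and is indeed a $\PP^1(\Q_2)$. To handle this, I would first find the eigenvector of $M(x)$ (eigenvalue $\pm\zeta_4$) and then solve the descent explicitly in $K$ for each $L^{na}_{4,i}$, again using Magma.
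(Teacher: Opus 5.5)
The unfiltered classification and the uniqueness argument are sound. The genuine gap is the step you flag yourself as the main obstacle. The statement requires $\Fil^1D_K=K(\alpha\otimes e_1+1\otimes e_2)$ with $\alpha\in\Q_2(\sqrt{-2})$, since this is built into the definition of $D_{pc}(4,0,i,\alpha)$. Your argument stops at $\alpha\in L$ and defers the rest to an eigenvector computation and a case-by-case descent in Magma.

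The missing observation is short, and no computation is needed.
\begin{itemize}
\item Since $M(x)^2=-\Id$ is scalar, the M\"obius map $\mu(z)=\frac{\sqrt5 z+2\sqrt{-3}}{\sqrt{-3}z-\sqrt5}$ is an involution.
\item Its coefficients lie in $K_0$, which is fixed by $x\in I_{K/\Q_2}$. So applying $x$ to $x(\alpha)=\mu(\alpha)$ gives $x^2(\alpha)=\mu(x(\alpha))=\mu(\mu(\alpha))=\alpha$.
\item Together with $t(\alpha)=\alpha$, this puts $\alpha$ in $K^{\langle x^2,t\rangle}$.
\item Since $\langle x^2,t\rangle$ is the only subgroup of order $4$ of $D_4$ containing $t$, this field is the unique quadratic subfield of $L=K^{\langle t\rangle}$.
\item For both $i$ this subfield is $\Q_2(\sqrt{-2})$. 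For $x^4+4x^2+6$ this holds because $x^2=-2\pm\sqrt{-2}$. For $x^4+8x+18$ it holds because the resolvent cubic $y^3-72y-64$ has the rational root $-8$; its roots are the squares of sums of pairs of roots.
\end{itemize}
So, contrary to your expectation, nothing depends on $i$.

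Your second worry, nonemptiness and the $\PP^1(\Q_2)$-parametrization, is already settled by Lemma~\ref{L: invariant subspaces}, which you cite. It identifies the stable lines with $\PP^1(\Q_2)$ by Galois descent, and you have shown that every stable line has the form $K(\alpha\otimes e_1+1\otimes e_2)$.

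Your treatment of the unfiltered module is correct but differs from the paper's. The paper writes down an explicit change of basis and verifies it by computer. In its coordinates $\lambda=u_1\sqrt5$ this is $\tbt{5(u_1+1)}{-2\sqrt{-15}(u_2-1)}{\sqrt{-15}(u_2-1)}{5(u_1+1)}$, with a separate matrix at the one point where it vanishes.

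You instead do the following.
\begin{itemize}
\item You write $M(x)=\sqrt{-3}\,(u_1\Id+u_2M(\varphi))\tbt{1}{0}{0}{-1}$.
\item You observe that conjugating by $B=c_0\Id+c_1M(\varphi)$ multiplies $u_1+u_2\sqrt{-2}$ by the norm-one element $\overline{B}B^{-1}$ of $\Q_2[M(\varphi)]\simeq\Q_2(\sqrt{-2})$, where $\overline{B}=c_0\Id-c_1M(\varphi)$.
\item By Hilbert 90 you conclude that the conic $u_1^2+2u_2^2=\frac13$ is a single orbit.
\end{itemize}
This is self-contained, and it explains why an explicit formula must exist.

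The formula you guess is wrong, however. After dividing by $\sqrt{-3}$ it is $(u_1+s)\Id+u_2M(\varphi)$ with $s=\sqrt5/\sqrt{-3}\in\Q_2$. Already at the model point $(u_1,u_2)=(s,1)$ it fails to commute with the model $M(x)$. The correct analogue of the $e=3$ formula is $(u_1+s)\Id+(u_2-1)M(\varphi)$, which is the paper's matrix up to a $\Q_2$-scalar. Since your orbit argument never uses the formula, this does no damage.

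Your uniqueness argument via Lemma~\ref{L: unfiltered automorphism} is the same as the paper's.
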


\begin{proof}
    As in the previous cases, we work in the basis provided by~\ref{L: adapted basis}. Since $\zeta_4+\zeta_4^{-1}=0$, Lemma~\ref{L: eigenvalues} implies that the minimal polynomial of $x$ acting on $D$ is $X^2+1$. Thus, in this basis, $$M(x)=\begin{pmatrix}
            \lambda & \mu'     \\
            \mu     & -\lambda \\
        \end{pmatrix}\quad M(t)=\begin{pmatrix}
            1 & 0 \\
            0 & 1 \\
        \end{pmatrix}$$

        By Lemma~\ref{L: a_2=0}, we have $a_2=0$. From the relation $txt=x^{-1}$, we obtain $$t(\lambda)=-\lambda \quad
        t(\mu)=-\mu\quad t(\mu')=-\mu'$$

    It follows that $\lambda=u_1\sqrt{5}$, $\mu=u_2\sqrt{-3}$ and
    $\mu'=u_3\sqrt{-3}$ with $u_i\in\Q_2$. The matrix of $x$ on $D$ therefore has the form $$ M(x)=\tbt{u_1\sqrt{5}}{2u_2\sqrt{-3}}{u_2\sqrt{-3}}{-u_1\sqrt{5}} $$

    The matrix $$B=\begin{pmatrix}
            5(u_1+1)          & -2\sqrt{-15}(u_2-1) \\
            \sqrt{-15}(u_2-1) & 5(u_1+1)            \\
        \end{pmatrix}~\text{if}~(u_1,u_2)\neq(-1,1)\quad\text{and}\quad B=\tbt{2\sqrt{-15}}{-10}{5}{2\sqrt{-15}}~\text{otherwise}$$ defines a change of basis that commutes with $\varphi$ and $t$ and sends the matrix of $x$ to the one in $D_{pc}(4,0,i,\alpha)$. This proves the unfiltered isomorphism.

        It remains to check that the filtration has the prescribed form. By Lemma~\ref{L: invariant subspaces}, any $\Gal(K/\Q_2)$-stable line defines an admissible filtration. Since the line generated by $e_1$ is not stable under $\Gal(K/\Q_2)$, we can
    assume that $\Fil^{1}D_K$ is generated by $\alpha\otimes e_1+1\otimes e_2$. Such
    a line is stable under $t$ if $t(\alpha)=\alpha$; thus $\alpha\in K^{\langle t \rangle}=L$. Moreover, stability under $x$ requires

    $$ \alpha=\frac{x(\alpha)\sqrt{5}+2\sqrt{-3}}{x(\alpha)\sqrt{-3}-\sqrt{5}}\iff x(\alpha)=\frac{\alpha\sqrt{5}+2\sqrt{-3}}{\alpha\sqrt{-3}-\sqrt{5}} $$

    Applying $x$ to the last equation gives $x^2(\alpha)=\alpha$; thus
    $\alpha$ is fixed by $x^2$ and so it belongs to the quadratic field $K^{\langle  x^2,t\rangle}$. For all $L^{na}_{4,i}$, this field is $\Q_2(\sqrt{-2})$ (it is the only quadratic subfield of $L^{na}_{4,i}=K^{\langle t \rangle}$). Thus $\alpha$ is an element of $\Q_2(\sqrt{-2})$ that satisfies
    $x(\alpha)=\frac{\alpha\sqrt{5}+2\sqrt{-3}}{\alpha\sqrt{-3}-\sqrt{5}}$, as
    desired.

    Moreover, each such $\alpha$ gives rise to a nonisomorphic $\fpgmt$. This follows, as in the case of $D_3$, from Lemma~\ref{L: unfiltered automorphism}.
\end{proof}

\begin{corollary}\label{Cor: classification for D4}
    Let $E/\Q_2$ have inertia type associated with $L_{4,i}^{na}$. Then $D_{pcris}^{*}(V_2(E))\simeq D_{pc}(4,0,i,\alpha)$ for a unique $\alpha$.
\end{corollary}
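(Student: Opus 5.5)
The plan follows the argument of Corollary~\ref{Cor: classification for D3}: first verify the hypotheses of Proposition~\ref{P: classification D4}, then invoke it.

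Let $E/\Q_2$ have inertia type associated with $L=L_{4,i}^{na}$. Let $K$ be the Galois closure of $L$, so $\Gal(K/\Q_2)\simeq D_4$ and $K_0=\Q_4$. By the remark following Lemma~\ref{L: t exists}, $L$ satisfies the hypotheses of that lemma with $f=2$. This gives $t\in\Gal(K/\Q_2)$ with $\langle t\rangle=\Gal(K/L)$ and $t\vert_{K_0}=\sigma$, which is the generator $t$ in the presentation of $D_4$ used above.

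\textbf{Step 1: condition ${\bf (A.2)}$.} Set $D=D_{cris,K}^{*}(V_2(E))$. By Lemma~\ref{L: curves satisfy A}, $D$ satisfies condition ${\bf (A)}$ for $K/L$. Since $[K_0:\Q_2]=2$, it in fact satisfies ${\bf (A.2)}$.

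\textbf{Step 2: the filtration.} By Lemma~\ref{L: Conditions on DVpE}, the filtration on $D_K$ has $\dim_K\Fil^0D_K=2$, $\dim_K\Fil^1D_K=1$ and $\Fil^2D_K=0$. Thus $D$ has Hodge--Tate weights $(0,1)$, and $\Fil^1D_K$ is a $\Gal(K/\Q_2)$-stable line, since the filtration of a $\fpgmt$ is Galois-stable by definition.

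\textbf{Step 3: conclusion.} Proposition~\ref{P: classification D4} now gives $D\simeq D_{pc}(4,0,i,\alpha)$ as $\fpgmt$s for a unique $\alpha$. Since $D_{cris,K}^{*}(V_2(E))$ represents $D_{pcris}^{*}(V_2(E))$ (see the discussion at the start of \S3), the corollary follows.

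The only point needing care is the identification in the setup: the $t$ of the $D_4$ presentation, with $L=K^{\langle t\rangle}$, must coincide with the element produced by Lemma~\ref{L: t exists}, so that Lemma~\ref{L: curves satisfy A} applies to exactly the pair $(K/L,t)$ that Proposition~\ref{P: classification D4} uses. I expect this to be immediate: both descriptions give $L=K^{\langle t\rangle}$, and $t$ has order $2$ with nontrivial restriction to $K_0$.
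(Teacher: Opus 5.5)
Your proposal is correct and is essentially the paper's proof, which just combines Lemma~\ref{L: curves satisfy A} (giving condition ${\bf (A.2)}$) with Proposition~\ref{P: classification D4}, using that $V_2(E)$ has Hodge--Tate weights $(0,1)$. Your extra check on the element $t$ is correct and immediate: $[K:L]=2$ forces $\Gal(K/L)=\langle t\rangle$, and $K=L\cdot K_0$ gives $t\vert_{K_0}=\sigma$.
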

\begin{proof}
    This follows directly from Lemma~\ref{L: curves satisfy A} and Proposition~\ref{P: classification D4}.  
\end{proof}

\subsection{Cyclic case}

Suppose now that $L=L^{ab}_{4,i}$ is as defined in Table~\ref{Ta: InTypes}. Then $\Gal(K/\Q_2)=\langle x\rangle\simeq C_4$.

Since $\Gal(K/\Q_2)\simeq C_4$, these fields contain a unique quadratic
subextension. In this particular case, $K^{\langle x^2
    \rangle}=\Q_2(\sqrt{2})$.

The cyclic case differs from the nonabelian cases: the supersingular trace $a=0$ is not possible. This is because the action of inertia factors through the automorphisms of the reduced curve. Since the extensions are totally ramified, the residue field must be $\F_2$ and the only elliptic curve over $\F_2$ with $a_2=0$ has $Aut_{\F_2}(\tilde{E})=C_2$; thus inertia isomorphic to $C_4$ is not possible. See $\S\ref{S: explicit realization}$ for more details.

\begin{defn}
    Let $K=L_{4,i}^{ab}$, let $a\in\{-2,2\}$, and identify the signs
$+$ and $-$ with $\varepsilon=1$ and $\varepsilon=-1$, respectively.
Define $D_{pc}(4,a_2,i,\varepsilon)$ by $D=\Q_2e_1\oplus\Q_2e_2$ and
$$M(\varphi):=\begin{pmatrix}0&-2\\1&a\end{pmatrix},
 \qquad M(x):=\varepsilon\left(\Id-\frac a2M(\varphi)\right),
 \qquad \Fil^1D_K=K(\sqrt2 e_1+e_2).$$
This gives the four modules $D_{pc}(4,\pm2,i,\pm)$.
\end{defn}

\begin{proposition}\label{P: classification C4}
       Let $L=L_{4,i}^{ab}$ be an extension in Table~\ref{Ta: InTypes}. Let $D$ be a $\fpgmt$ satisfying ${\bf (A.1)}$ for $K/L$ with $a_2\in\{-2,0,2\}$. Then $D\simeq D_{pc}(4,a_2,i,\varepsilon)$ as an $\upgmt$ for some $a_2\in\{-2,2\}$ and $\varepsilon\in\{-,+\}$. If, moreover, $D$ has Hodge--Tate weights $(0,1)$, then they are also isomorphic as $\fpgmt$s.
\end{proposition}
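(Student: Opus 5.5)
Since $f=1$ here, $K_0=\Q_2$, the module $D$ is a $2$-dimensional $\Q_2$-vector space, $\varphi$ is linear, and the whole group $\Gal(K/\Q_2)=\langle x\rangle=I_{K/\Q_2}\simeq C_4$ acts $\Q_2$-linearly. Moreover $t=1$, so $D_0=D$. We work in the basis of Lemma~\ref{L: adapted basis}, where $M(\varphi)=\tbt{0}{-2}{1}{a_2}$.

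\textbf{Step 1: the form of $M(x)$.} Since $x$ commutes with $\varphi$ and $M(\varphi)$ is nonscalar (its characteristic polynomial $X^2-a_2X+2$ is irreducible over $\Q_2$ for $a_2\in\{-2,0,2\}$), we get $M(x)=c_0\Id+c_1M(\varphi)$ with $c_0,c_1\in\Q_2$. In other words, $x$ lies in the quadratic field $F=\Q_2[\varphi]\simeq\Q_2[X]/(X^2-a_2X+2)$.

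\textbf{Step 2: constraints on $x$.} Lemma~\ref{L: eigenvalues} gives $\Tr(x)=0$, and condition ${\bf(A)}$ gives $\det x=1$. Hence $x$ is an element of $F$ with $x^2=-1$, so $F$ must contain $\sqrt{-1}$.
\begin{itemize}
\item For $a_2=0$ we have $F=\Q_2(\sqrt{-2})$, which does not contain $\sqrt{-1}$. This excludes $a_2=0$.
\item For $a_2=\pm2$ we have $F=\Q_2(\sqrt{-1})$, since the discriminant is $4-8=-4$. The roots are $\varphi=\pm(1+i)$, so $i=\pm\varphi-1$ up to sign.
\end{itemize}
Solving $x^2=-1$ in $F$ therefore gives $x=\varepsilon(\Id-\frac{a_2}{2}\varphi)$ with $\varepsilon=\pm1$. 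As a check, $(\Id-\frac{a}{2}\varphi)^2=\Id-a\varphi+\frac{a^2}{4}\varphi^2=\Id-a\varphi+(a\varphi-2)=-\Id$. This shows that, as an unfiltered module, $D\simeq D_{pc}(4,a_2,i,\varepsilon)$ on the nose in the adapted basis.

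\textbf{Step 3: the filtration, which is the main obstacle.} Because $a_2\neq0$, Lemma~\ref{L: invariant subspaces} does not apply, so weak admissibility must be checked directly. Over $\Q_2$, $\varphi$ has no stable line, since its characteristic polynomial is irreducible. Hence any Galois-stable line gives $t_N=t_H=1$ and is weakly admissible.

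The line $\Fil^1D_K=K(\beta e_1+e_2)$ must be stable under $x$. Let $\pi_K$ denote the character by which $x$ acts on the line. Writing $x(\beta\otimes e_1+1\otimes e_2)=\gamma(\beta\otimes e_1+1\otimes e_2)$ with $M(x)$ explicit gives the condition $x(\beta)=\frac{\varepsilon(\beta+a_2)}{?}$. This is a Möbius-type relation; I would compute it, then use that $x^2$ acts on $D$ by $-1$ (a scalar) to deduce that $\beta$ is fixed by $x^2$, so $\beta\in\Q_2(\sqrt2)$. Solving the resulting quadratic should give $\beta=\pm\sqrt2$ (or similar), and the sign choice is absorbed using the relevant $i$.

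\textbf{Step 4: uniqueness of the filtration.} The unfiltered automorphisms are $c_0+c_1\varphi\in F^\times$, and these act on the stable lines. The two solutions $\beta$ would be related either by such an automorphism or by the choice of embedding fixed by $x$. This yields uniqueness of the filtration up to isomorphism, so the module is isomorphic to the one with $\Fil^1=K(\sqrt2e_1+e_2)$.

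I expect the main difficulty to be the explicit identification of the stable lines and the verification that all of them are equivalent. I would do this by computing directly in $\Q_2(\sqrt2)\otimes D$, as in the paper's Magma checks.
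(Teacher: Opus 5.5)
Your Steps 1 and 2 are correct and amount to the paper's argument. The paper imposes $2c_0+a_2c_1=0$ and $\det M(x)=1$ on $M(x)=c_0\Id+c_1M(\varphi)$ directly. You instead note that $x$ is a square root of $-1$ in the field $F=\Q_2[\varphi]$, which is $\Q_2(\sqrt{-2})$ for $a_2=0$ and $\Q_2(\sqrt{-1})$ for $a_2=\pm2$. This is a slicker route to the same matrices $\varepsilon(\Id-\frac{a_2}{2}M(\varphi))$.

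The gap is in Steps 3--4. The problem is not only that the computation is left undone (the formula with a ``?''); your expected answer is wrong.

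\textbf{The stability condition.} Stability of $K(\beta\otimes e_1+1\otimes e_2)$ under $x$ does not depend on $\varepsilon$, since $-\Id$ fixes every line. It reads $x(\beta)=-\frac{\beta+2}{\beta+1}$ for $a_2=2$ and $x(\beta)=\frac{\beta-2}{\beta-1}$ for $a_2=-2$. You correctly get $\beta\in\Q_2(\sqrt2)$ from $x^2=-\Id$, and $x$ restricts to the nontrivial automorphism of $\Q_2(\sqrt2)$. Hence both conditions become $\operatorname{Nm}_{\Q_2(\sqrt2)/\Q_2}(\beta+a_2/2)=-1$.

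\textbf{How many solutions.} This equation has infinitely many solutions, not two. By Galois descent, the $\Gal(K/\Q_2)$-stable lines in $D_K$ are the $K$-spans of the $\Q_2$-lines in the $2$-dimensional space $W=D_K^{\Gal(K/\Q_2)}$. So there is a full $\PP^1(\Q_2)$ of admissible filtrations, of which $\pm\sqrt2$ are just two.

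\textbf{What is missing.} The filtered isomorphism therefore requires showing that the unfiltered automorphism group $F^\times=\{c_0\Id+c_1\varphi\}$ acts transitively on all of these lines. Your Step 4 does not do this. Your fallbacks are also unavailable: ``the choice of embedding fixed by $x$'' is not an automorphism of $D$, and $i$ is fixed together with $L$, so it cannot absorb a sign.

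\textbf{How the paper closes it.} The paper uses the explicit automorphism $f=\frac{\alpha\beta+2\beta+2}{\alpha-\beta}\Id+\varphi$. Its scalar lies in $\Q_2$, and it carries the line of $\alpha$ to the line of $\beta$. The paper then exhibits the representative $\beta=\sqrt2$ via $\operatorname{Nm}(\sqrt2\pm1)=-1$.

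\textbf{A cleaner finish in your own language.} The map $\varphi\otimes1$ commutes with the diagonal Galois action, so $W$ is a $1$-dimensional $F$-vector space. Then $F^\times$ is transitive on $W\setminus\{0\}$, hence on its $\Q_2$-lines. Add the check that $\sqrt2$ satisfies the norm equation, so that the line in the definition of $D_{pc}(4,a_2,i,\varepsilon)$ is Galois-stable. Together these prove the statement.
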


\begin{proof}
    We work in the basis given by Lemma~\ref{L: adapted basis}. In this case, $t=Id$ and $\varphi=\varphi_0$ is linear. By Lemma~\ref{L: eigenvalues}, since $\ord(x)=4$, we have $\operatorname{Tr}(x)=0$. The matrix of $x$ in this basis is of the form

    $$M(x)=\begin{pmatrix}
            \lambda & \mu'     \\
            \mu     & -\lambda \\
        \end{pmatrix}$$

    Since $\varphi$ is not scalar, $\varphi\circ x=x\circ \varphi$ implies that $$M(x)=c_0Id+c_1M(\varphi)=\tbt{c_0}{-2c_1}{c_1}{c_0+a_2c_1}$$.

    This, together with $\Tr(x)=0$, implies $2c_0+a_2c_1=0$. In particular, $a_2\neq0$; otherwise $c_0=0$ and $\det M(x)=2c_1^2=1$ has no solution in $\Q_2$. Thus we can assume $a_2\neq 0$. Then $c_1=\frac{-2c_0}{a_2}$. Imposing the additional condition $\det M(x)=1$, we find that
    if $a_2=2$, the possible solutions are
    \[ M^{+}(x)=Id-M(\varphi)=\tbt{1}{2}{-1}{-1}\quad M^{-}(x)=-Id+M(\varphi)=\tbt{-1}{-2}{1}{1}
    \]
    and if $a_2=-2$, the possible solutions are
    \[ M^{+}(x)=Id+M(\varphi)=\tbt{1}{-2}{1}{-1}\quad\quad M(x)^{-}=-Id-M(\varphi)=\tbt{-1}{2}{-1}{1}
    \]

    We now focus on the case $a_2=2$, since the case $a_2=-2$ is analogous. The two solutions for $a_2=2$ are nonisomorphic, as an isomorphism would be
    given by a matrix $B$ satisfying $BM(\varphi)=M(\varphi) B$ and
    $B(Id-M(\varphi))=(-Id+M(\varphi))B$, whose only solution is $B=0$. Thus $M^{+}$ and $M^{-}$ give rise to $2$ nonisomorphic unfiltered $(\varphi,\Gal(K/\Q_2))$-modules that will correspond to $D_{pc}(4,2,i,\pm)$. This proves the unfiltered isomorphism.

    We now show that each of these modules admits exactly one isomorphism class of admissible filtrations.
    
    Since the minimal polynomial of $\varphi$ is irreducible over $\Q_2$, the operator has no invariant subspaces and so any $\Gal(K/\Q_2)$-stable line in $D_K$ defines an admissible $\Fil^1 D_K$. Clearly, the line $Ke_1$ is not stable under $x$, so we can assume that $\Fil^{1}(D_K)$ is generated by $\alpha\otimes e_1+1\otimes e_2$ for $\alpha\in K$. Stability under $x$ is then equivalent to the equations $$\alpha=\frac{x(\alpha)+2}{-x(\alpha)-1}~\quad \text{for}~M^{+}(x)\quad\text{and}\quad
        \alpha=\frac{-x(\alpha)-2}{x(\alpha)+1}\quad \text{for}~M^{-}(x)$$

    Both equations are equivalent to $x(\alpha)=\frac{\alpha+2}{-\alpha-1}$. Applying $x$ to the above equations, we deduce that $x^2(\alpha)=\alpha$; thus
    $\alpha$ lies in the unique quadratic subextension of $K$, namely $K^{\langle
                x^2\rangle}=\Q_2(\sqrt{2})$.
    In the case $M^{+}(x)$, given two $\Gal(K/\Q_2)$-stable filtrations $\alpha\otimes e_1+ 1\otimes e_2$ and $\beta\otimes e_1+ 1\otimes e_2$ with $\alpha\neq \beta$, we can consider the map $f=\frac{\alpha\beta+2\beta+2}{\alpha-\beta}Id+\varphi$. A computation shows that it commutes
    with $\varphi$ and $x$, that $\frac{\alpha\beta+2\beta+2}{\alpha-\beta}\in \Q_2$ and that $f\otimes Id$ sends the one-dimensional $K$-subspace
    of $D_K$ generated by $\alpha\otimes e_1+ 1\otimes e_2$ to the one generated by $\beta\otimes e_1+ 1\otimes e_2$, thereby proving that all such filtrations are equivalent. A similar choice of $f$ shows that all filtrations for $M^{-}(x)$ are also equivalent. Thus it is enough to compute one $\alpha$ that
    satisfies $x(\alpha)=\frac{\alpha+2}{-\alpha-1}$. We can rearrange this
    equation as $(x(\alpha)+1)(\alpha+1)=-1$ or $Nm(\alpha+1)=-1$. Thus we look for an element $1+\alpha$ in 
    $K^{\langle x^2 \rangle}=\Q_2(\sqrt{2})$ of norm $-1$. We can choose $1+\alpha=1+\sqrt{2}$; thus $\alpha=\sqrt{2}$. This finishes the proof for $a_2=2$. The proof for $a_2=-2$ is completely analogous.
\end{proof}

\begin{corollary}\label{Cor: classification for C4}
    Let $E/\Q_2$ have inertia type associated with $L_{4,i}^{ab}$. Then $D_{pcris}^{*}(V_2(E))\simeq D_{pc}(4,a_2,i,\varepsilon)$ for uniquely determined $a_2\in\{-2,2\}$ and $\varepsilon\in\{-,+\}$.
\end{corollary}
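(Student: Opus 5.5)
The corollary follows the same pattern as Corollaries~\ref{Cor: classification for D3} and~\ref{Cor: classification for D4}: we check that $D:=D_{cris,K}^{*}(V_2(E))$ satisfies the hypotheses of Proposition~\ref{P: classification C4}, apply it, and then argue uniqueness of the parameters separately.

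\emph{Existence.} Take $K$ to be the Galois closure of $L=L^{ab}_{4,i}$. Since $\Gal(K/\Q_2)\simeq C_4$ is abelian, $K=L$ is totally ramified, so $K_0=\Q_2$ and $t=\Id$. Thus the hypothesis $K=L\cdot K_0$ of Lemma~\ref{L: t exists} holds trivially. By Lemma~\ref{L: curves satisfy A}, $D$ satisfies ${\bf (A)}$, and since $[K_0:\Q_2]=1$ it satisfies ${\bf (A.1)}$ for $K/L$. By Lemma~\ref{L: pot supersingular}, $e=4\geq 3$ forces $E$ to be potentially supersingular, so $a_2$ is even. The Hasse--Weil bound in Lemma~\ref{L: Conditions on DVpE} then gives $a_2\in\{-2,0,2\}$. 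The Hodge--Tate weights are $(0,1)$ by Lemma~\ref{L: Conditions on DVpE}. Proposition~\ref{P: classification C4} now yields $D\simeq D_{pc}(4,a_2,i,\varepsilon)$ as filtered modules, for some $a_2\in\{-2,2\}$ and some sign $\varepsilon$. In particular the case $a_2=0$ is excluded, either by the proposition itself or by the automorphism argument given before the definition.

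\emph{Uniqueness.} Suppose $D_{pc}(4,a,i,\varepsilon)\simeq D_{pc}(4,a',i,\varepsilon')$.
\begin{itemize}
\item An isomorphism preserves the characteristic polynomial of $\varphi$, which is $X^2-aX+2$. Hence $a=a'$, and this value is forced to equal the trace of Frobenius $a_2$ of the reduction of $E$ over $L$.
\item For fixed $a$, the proof of Proposition~\ref{P: classification C4} shows that $M^{+}$ and $M^{-}$ are not isomorphic even as unfiltered modules. The only intertwiner $B$ commuting with $M(\varphi)$ and conjugating $M^{+}(x)$ to $M^{-}(x)$ is $B=0$. Hence $\varepsilon=\varepsilon'$.
\end{itemize}
Alternatively, $\varepsilon$ can be read off as an invariant: $M(x)=\varepsilon(\Id-\tfrac a2 M(\varphi))$, and $\varphi$ determines $M(\varphi)$ up to conjugation by its own centralizer, so $\varepsilon$ is intrinsic.

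\emph{Main obstacle.} Once Proposition~\ref{P: classification C4} is available, nothing here is hard. The only point needing care is confirming that the $C_4$ fields satisfy the setup of Lemma~\ref{L: t exists} with $f=1$, so that ${\bf (A.1)}$ rather than ${\bf (A.2)}$ is the correct condition.
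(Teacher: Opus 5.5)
Your proposal is correct and follows the paper's route; the paper's proof is simply the one-line combination of Lemma~\ref{L: curves satisfy A} with Proposition~\ref{P: classification C4}. What you add is what the paper leaves implicit in that citation and in the proof of the proposition: you check the hypothesis $a_2\in\{-2,0,2\}$ via Lemma~\ref{L: pot supersingular} and the Hasse--Weil bound, and you spell out uniqueness using the characteristic polynomial of $\varphi$ and the fact that any intertwiner between $M^{+}(x)$ and $M^{-}(x)$ vanishes.
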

\begin{proof}
    This follows directly from Lemma~\ref{L: curves satisfy A} and Proposition~\ref{P: classification C4}.  
\end{proof}

\section{Semistability defect $e=6$}

Let $E/\Q_2$ be an elliptic curve with semistability defect $e=6$. By
Lemma~\ref{L: InTypes}, any such curve attains good reduction over $L=L_{6,i}$ for some $i$. Thus, if $K/\Q_2$ is the Galois closure, then $\Gal(K/\Q_2)\simeq D_{6}$. This group can be described as the
semidirect product $\Gal(K/\Q_2)\simeq I_{K/\Q_2}\rtimes \langle t \rangle$.
\[\Gal(K/\Q_2)=\langle x\rangle\rtimes\langle t \rangle=\langle x,t~|~x^6=1,~t^2=1,~txt=x^{-1}\rangle,\] where $x$ is a generator of $I_{K/\Q_2}\simeq C_6$ and $t$ is some Frobenius for
the extension $K/\Q_2$. We choose $\zeta_6=-\zeta_3^2$, so that
$\zeta_6^2=\zeta_3$. All these fields contain the field $\Q_2(\sqrt[3]{2},\zeta_3)$, and we can always choose $x,t$ so that $x(\sqrt[3]{2})=\zeta_3\sqrt[3]{2},~x(\zeta_6)=\zeta_6,~t(\sqrt[3]{2})=\sqrt[3]{2}$ and $t(\zeta_3)=\zeta_3^2$; that is, $x,t$ can be chosen to be lifts of the elements $x,t$ in the description of $D_{pc}(3,0,\alpha)$. In particular, $K^{\langle x^3\rangle}=\Q_2(\sqrt[3]{2},\zeta_6)$.

\begin{defn}
    Let $K$ be as described above. We define the $\fpgm~D_{pc}(6,0,i,\alpha)$ by:

    \begin{itemize}
        \item $D:=\Q_4e_1\oplus\Q_4e_2$
        \item $M(\varphi)=\begin{pmatrix}
                      0 & -2 \\
                      1 & 0  \\
                  \end{pmatrix}$
        \item $M(x)=\begin{pmatrix}
                      \zeta_6 & 0            \\
                      0       & \zeta_6^{-1} \\
                  \end{pmatrix}$

        \item $M(t)=\begin{pmatrix}
                      1 & 0 \\
                      0 & 1 \\
                  \end{pmatrix}$
        \item $\Fil^1D_{K}:=(\alpha\tenso e_1+\sqrt[3]{2}\tenso e_2)K$ with $\alpha\in\PP^1(\ma Q_2)$.
    \end{itemize}
\end{defn}

We reduce this case to the case $e=3$. The following lemma contains
the main calculation and is stated separately for its independent interest.

\begin{lemma}\label{L: Twist computation D6}
    Let $K/\Q_2$ be one of the above fields, let $F:=\Q_2(\sqrt{d})$ be the fixed field of $\langle x^2,t \rangle$ and let $\varepsilon_{d}$ be the quadratic character associated with $F$. Let $\tilde{K}$ be the fixed field of $\langle x^3\rangle$ and $D_{pc}(3,0,\alpha)$ the filtered $(\varphi,\Gal(\tilde{K}/\Q_2))$-module described in Definition~\ref{D: D3}. Then $D_{pc}(3,0,\alpha)\otimes D_{\varepsilon_{d}}\simeq D_{pc}(6,0,i,-4\alpha^{-1})$ as a $\fpgm$.
\end{lemma}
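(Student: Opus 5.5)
The plan is to compute both sides explicitly as filtered $(\varphi,\Gal(K/\Q_2))$-modules over the common field $K$, and then write down an explicit change of basis. Since $\tilde K=K^{\langle x^3\rangle}\subset K$ and $F\subset K$, both factors can be represented over $K$ by the base change recipe of \S\ref{S:Background}. Because $K_0=\tilde K_0=\Q_4$, the underlying $\Q_4$-space of $D_{pc}(3,0,\alpha)$ does not change. Its Frobenius $M(\varphi)=\tbt{0}{-2}{1}{0}$ and its matrix $M(t)=\Id$ are unchanged. The generator $x$ of $I_{K/\Q_2}$ acts through its image in $\Gal(\tilde K/\Q_2)$. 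By the choice of $x,t$ as lifts of the elements used in Definition~\ref{D: D3}, this gives $M(x)=\mathrm{diag}(\zeta_3,\zeta_3^2)$. The filtration is $K(\alpha\otimes e_1+(\sqrt[3]{2})^2\otimes e_2)$.

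For $D_{\varepsilon_d}$ I would argue as in Lemma~\ref{L: D varepsilon}. It is $\Q_2 e$ with $\varphi=\Id$ and trivial filtration jump, so $\Fil^0=D_K$ and $\Fil^1=0$. The group $\Gal(K/\Q_2)$ acts through $\Gal(F/\Q_2)$, so $\langle x^2,t\rangle$ acts trivially while $x$ acts by $-1$; this uses that $x\notin\langle x^2,t\rangle$. Using the tensor product rules of \S\ref{S:Background}, the product has basis $e_i\otimes e$ and $M(\varphi)=\tbt{0}{-2}{1}{0}$. It also has $M(t)=\Id$ and
$$M(x)=\mathrm{diag}(-\zeta_3,-\zeta_3^2)=\mathrm{diag}(\zeta_6^{-1},\zeta_6),$$
where the last equality uses $\zeta_6=-\zeta_3^2$. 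Its filtration is $\Fil^1=\Fil^1\otimes\Fil^0$, which is the same line as before.

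To match Definition~\ref{D: D3}'s companion for $e=6$, I would change to the basis $f_1=e_2\otimes e$, $f_2=\varphi(f_1)=-2e_1\otimes e$. This change of basis is given by a matrix in $M_2(\Q_2)$ commuting with $M(\varphi)$, so it commutes with $\varphi$ and $t$. In this basis we have $\varphi(f_2)=-2f_1$, so $M(\varphi)$ keeps its form. We also get $x(f_1)=-\zeta_3^2 f_1=\zeta_6 f_1$ and $x(f_2)=\zeta_6^{-1}f_2$, so $M(x)=\mathrm{diag}(\zeta_6,\zeta_6^{-1})$ and $M(t)=\Id$.

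The filtration vector becomes $\alpha e_1+\sqrt[3]{4}e_2=\sqrt[3]{4}f_1-\tfrac{\alpha}{2}f_2$. Multiplying by $-2\sqrt[3]{2}/\alpha$ rescales it to $-4\alpha^{-1}f_1+\sqrt[3]{2}f_2$, which is exactly the filtration of $D_{pc}(6,0,i,-4\alpha^{-1})$. The cases $\alpha=0,\infty$ have to be interpreted projectively, and they swap correctly. This gives the claimed isomorphism.

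The only step I expect to need care is the bookkeeping of the Galois action after base change. Concretely, one must check that the lifts $x,t\in\Gal(K/\Q_2)$ restrict to the $x,t$ of the $e=3$ case, and that $\varepsilon_d(x)=-1$ while $\varepsilon_d(t)=1$. This follows from $F=K^{\langle x^2,t\rangle}$ together with the fact that $\langle x^2,t\rangle$ has index $2$ and does not contain $x$. Everything else is a direct matrix computation.
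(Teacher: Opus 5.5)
Your proposal is correct and follows essentially the same route as the paper: base change both factors to $K$, form the tensor product in the basis $e_i\otimes e$, and pass to the basis $e_2\otimes e,\,-2e_1\otimes e$, in which the filtration line rescales to $-4\alpha^{-1}\otimes e_1'+\sqrt[3]{2}\otimes e_2'$. Your explicit check that $\alpha=0$ and $\alpha=\infty$ swap correctly under the projective convention is a small addition the paper leaves implicit.
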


\begin{proof}
    To compute the tensor product, we must regard $D_{pc}(3,0,\alpha)$ and $D_{\varepsilon_{d}}$ as filtered $(\varphi, \Gal(K/\Q_2))$-modules as explained in \S2.

    The element $x\in\Gal(K/\Q_2)$ is a lift of $x\in\Gal(\tilde{K}/\Q_2)$ appearing in Definition~\ref{D: D3} and $x\in\Gal(F/\Q_2)$ in Lemma~\ref{L: D varepsilon}. These lifts make the action of $\Gal(K/\Q_2)$ explicit on
    $D_{pc}(3,0,\alpha)$ and on $D_{\varepsilon_{d}}$.

    The base change of $D_{pc}(3,0,\alpha)$ to $K$ is
    \begin{itemize}
        \item $D_{pc}(3,0,\alpha):=\Q_4e_1\oplus\Q_4e_2$
        \item $M(\varphi)=\begin{pmatrix}
                      0 & -2 \\
                      1 & 0  \\
                  \end{pmatrix}$
        \item $M(x)=\begin{pmatrix}
                      \zeta_3 & 0         \\
                      0       & \zeta_3^2 \\
                  \end{pmatrix}$
        \item $M(t)=\begin{pmatrix}
                      1 & 0 \\
                      0 & 1 \\
                  \end{pmatrix}$
        \item $\Fil^1D_{K}:=(\alpha\tenso e_1+\sqrt[3]{2}^2\tenso e_2)K$ with $\alpha\in\PP^1(\ma Q_2)$
    \end{itemize}

    The difference from the description as a filtered
    $(\varphi,\Gal(\tilde{K}/\Q_2))$-module is that now the group acting is $D_6$
    but the action factors through $D_3\simeq\Gal(\tilde{K}/\Q_2)$, reflecting the
    fact that the corresponding representation was already crystalline over
    $\tilde{K}$. For $D_{\varepsilon_{d}}$, the difference is more apparent. The base change of
    $D_{\varepsilon_{d}}$ to $K$ is as follows:

    \begin{itemize}
        \item $D_{\varepsilon_{d}}:=\Q_4e_{\varepsilon_{d}}$
        \item $M(\varphi_{\varepsilon_{d}})=1$
        \item $M(x)=-1$
        \item $M(t)=1$
        \item $\Fil^0D_{K}:=e_{\varepsilon_{d}} K$ and $\Fil^1D_{K}:=0$
    \end{itemize}
    Note that $D_{\varepsilon_{d}}$, as a filtered $(\varphi,\Gal(F/\Q_2))$-module, was a $\Q_2$-vector space while here it is a $\Q_4$-vector space. Previously, we only needed to specify the action of $x$, the generator of $\Gal(F/\Q_2)$, whereas we now need to specify how $t$ acts in order to define the whole action of $\Gal(K/\Q_2)$. Moreover, $\varphi$ was linear (the identity) and is now a semilinear map whose matrix in a properly chosen basis is $1$.

    Once we have both objects described as $\fpgm$s, we can compute their tensor
    product $D:=D_{pc}(3,0,\alpha)\otimes_{\Q_4} D_{\varepsilon_{d}}$. A basis of $D$ is given by
    $e_{1\varepsilon_{d}}:=e_1\otimes e_{\varepsilon_{d}}$ and $e_{2\varepsilon_{d}}:=e_2\otimes e_{\varepsilon_{d}}$. In this
    basis, we have $$M(\varphi\otimes\varphi_{\varepsilon_{d}})=\tbt{0}{-2}{1}{0}\quad
        M(x)=\tbt{-\zeta_3}{0}{0}{-\zeta_3^2}=\tbt{\zeta_6^{-1}}{0}{0}{\zeta_6}\quad
        M(t)=\tbt{1}{0}{0}{1}$$

    Moreover, the filtration is $$ \Fil^1(D_K)=(\alpha\otimes e_{1\varepsilon_{d}}+\sqrt[3]{2}^2\otimes e_{2\varepsilon_{d}})K $$

    To put this in the form of $D_{pc}(6,0,i,\beta)$ for some $\beta$, we
    need the eigenvalue $\zeta_6$ to correspond to the first basis vector and
    $\zeta_6^{-1}$ to the second one. Thus we can use the new basis $e'_1=e_{2\varepsilon_{d}}$
    and $e'_2=-2e_{1\varepsilon_{d}}$. In this basis
    $$M(\varphi\otimes\varphi_{\varepsilon_{d}})=\tbt{0}{-2}{1}{0}\quad M(x)=\tbt{\zeta_6}{0}{0}{\zeta_6^{-1}}\quad
        M(t)=\tbt{1}{0}{0}{1}$$

    In this basis, the filtration is $$ \Fil^1(D_K)~=~(-4\alpha^{-1}\otimes
        e'_1+\sqrt[3]{2} \otimes e'_2)K $$

    Thus $D_{pc}(3,0,\alpha)\otimes D_{\varepsilon_{d}}\simeq D_{pc}(6,0,i,-4\alpha^{-1})$
    as desired.
\end{proof}

\begin{corollary}\label{Cor: classification D6}
    Let $E/\Q_2$ be an elliptic curve with semistability defect $e=6$ and let $V_2(E)$ be its $2$-adic Tate module. Then $D_{pcris}^{*}(V_2(E))\simeq D_{pc}(6,0,i,\alpha)$ for a unique $\alpha\in\PP(\Q_2)$.
\end{corollary}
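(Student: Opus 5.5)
We reduce the statement to the case $e=3$ by a quadratic twist, using Lemma~\ref{L: Twist computation D6}. Let $E/\Q_2$ have semistability defect $e=6$. By Lemma~\ref{L: InTypes}, its inertia type is $\tau_{sc,2}(5,1,3)\otimes\varepsilon$ with $\varepsilon\in\{\varepsilon_{-4},\varepsilon_8,\varepsilon_{-8}\}$, and $E$ acquires good reduction over $L=L_{6,i}$ with Galois closure $K$.

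Let $F=\Q_2(\sqrt d)=K^{\langle x^2,t\rangle}$ be the quadratic subfield singled out in Lemma~\ref{L: Twist computation D6}. We consider the quadratic twist $E_d$ of $E$ by $d$. Its inertia type is $\tau_E\otimes\varepsilon_d$.

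The first step is to check that $E_d$ has type $\tau_{sc,2}(5,1,3)$, i.e.\ semistability defect $3$. The inertia of $K/\Q_2$ is cyclic of order $6$, so it has a unique quotient of order $2$. The restriction of $\varepsilon_d$ to inertia is that order-$2$ character, because $F$ is ramified: $x$ acts nontrivially on $\sqrt d$, as in Lemma~\ref{L: D varepsilon}. On the other hand, $\rho_{WD}(E)|_{I}$ has the eigencharacters $\zeta_6^{\pm1}$. Twisting by this order-$2$ inertia character multiplies them by $-1$, giving $\zeta_3^{\mp1}$. Hence $E_d$ has inertia of order $3$, so $e(E_d)=3$.

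The second step invokes Corollary~\ref{Cor: classification for D3}, which gives $D_{pcris}^*(V_2(E_d))\simeq D_{pc}(3,0,\alpha)$ for a unique $\alpha$. We must make sure this module is expressed relative to the generators $x,t$ chosen compatibly with $\tilde K=K^{\langle x^3\rangle}\supset\Q_2(\sqrt[3]2,\zeta_3)$. In fact $\tilde K=\Q_2(\sqrt[3]2,\zeta_3)$, by the choices fixed in the $e=6$ setup.

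The third step uses $V_2(E)\simeq V_2(E_d)\otimes\varepsilon_d$ together with the tensor compatibility of $D^*_{pcris}$. This gives
\[ D_{pcris}^*(V_2(E))\simeq D_{pc}(3,0,\alpha)\otimes D_{\varepsilon_d}\simeq D_{pc}(6,0,i,-4\alpha^{-1}), \]
where the last isomorphism is Lemma~\ref{L: Twist computation D6}. The index $i$ is the one attached to the field $L_{6,i}$.

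The map $\alpha\mapsto-4\alpha^{-1}$ is a bijection of $\PP^1(\Q_2)$. So every parameter is attained, and uniqueness is inherited from uniqueness in the $e=3$ case. Alternatively, uniqueness can be checked directly with Lemma~\ref{L: unfiltered automorphism}: $M(x)$ is diagonal and nonscalar, and its span with $\Id$ meets $\cl L(\Id,M(\varphi))$ only in the scalars.

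The main obstacle is the bookkeeping between fields. We must verify that the field $F$ of the lemma is exactly the quadratic field by which one twists $E$ to get good reduction over $\Q_2(\sqrt[3]2)$. Concretely, $E_d$ should have type $\tau_{sc,2}(5,1,3)$ rather than another twist. This holds because $\tau_{sc,2}(5,1,3)$ is the only type with $e=3$. Also, $\varepsilon_d$ restricted to $I_K$ is the unique nontrivial quadratic inertia character, whichever of $\varepsilon_{-4},\varepsilon_{8},\varepsilon_{-8}$ defines $\tau_E$. Hence the twist has defect $3$ regardless of $d$, and the argument is uniform in $i$.
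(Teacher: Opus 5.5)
Your proposal is correct and follows the paper's own argument: twist by $\varepsilon_d$ down to the unique $e=3$ type, apply Corollary~\ref{Cor: classification for D3}, and transport the result through Lemma~\ref{L: Twist computation D6}. Your extra checks fill in details the paper leaves implicit: that twisting by $\varepsilon_d$ for $F=K^{\langle x^2,t\rangle}$ really gives defect $3$, and that uniqueness follows from invertibility of the twist or from Lemma~\ref{L: unfiltered automorphism}. One small slip: where you write ``restricted to $I_K$'' you mean $I_{K/\Q_2}$, since $\varepsilon_d$ is trivial on $I_K$.
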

\begin{proof}
    Any elliptic curve $E/\Q_2$ with semistability defect $e=6$ is the quadratic twist of an elliptic curve $E_d$ with semistability defect $e=3$. Thus $V_2(E)=V_2(E_d)\otimes \varepsilon_{d}$ with $\varepsilon_{d}$ the quadratic character associated with the extension $F=\Q_2(\sqrt{d})$. Since $D_{pcris}^{*}$ is a tensor antiequivalence of categories, it follows that $D_{pcris}^{*}(V_2(E))\simeq D_{pcris}^{*}(V_2(E_d))\otimes D_{\varepsilon_{d}}$. By Corollary~\ref{Cor: classification for D3}, $D_{pcris}^{*}(V_2(E_d))\simeq D_{pc}(3,0,\alpha)$ for some $\alpha\in \PP(\Q_2)$  and so, by Lemma~\ref{L: Twist computation D6}, it follows that $D_{pcris}^{*}(V_2(E))\simeq D_{pc}(6,0,i,-4\alpha^{-1})$ as desired.
\end{proof}

\section{Semistability defect $e=8$}

Let $E/\Q_2$ be an elliptic curve with semistability defect $e=8$. By
Lemma~\ref{L: InTypes}, any such curve attains good reduction over $L=L_{8,i}$ for some $i$. Let $K$ be the Galois closure. Then $\Gal(K/\Q_2)\simeq SD_{16}$, the semidihedral group of order $16$. This group can be described as the semidirect
product $\Gal(K/\Q_2)\simeq I_{K/\Q_2}\rtimes \langle Frob\rangle$.
\[\Gal(K/\Q_2)=\langle x,y\rangle\rtimes\langle t \rangle=\langle
    x,y,t~|~x^4=1,~y^2=x^2,~y^{-1}xy=x^{-1},~t^2=1,~txt=y\rangle\]
where $x,y$ are generators of $I_{K/\Q_2}\simeq Q_8$ and $t$ is some Frobenius
for the extension $K/\Q_2$ that we fix for the rest of this section.

\begin{defn}
    Let $K$ be one of the fields above, let $\phi_1=\frac{1+\sqrt{5}}{2}$ be a root of $X^2-X-1$, and let $\phi_2=t(\phi_1)$. We define the $\fpgm D_{pc}(8,0,i,\alpha,\varepsilon)$ with $\varepsilon\in\{+,-\}$ by:

    \begin{itemize}
        \item $D:=\Q_4e_1\oplus\Q_4e_2$
        \item $M(\varphi)=\begin{pmatrix}
                      0 & -2 \\
                      1 & 0  \\
                  \end{pmatrix}$
        \item $M(x)=\varepsilon \begin{pmatrix}
                      \sqrt{-3} & -2\phi_2   \\
                      \phi_1    & -\sqrt{-3} \\
                  \end{pmatrix}$
        \item $M(y)=\varepsilon \begin{pmatrix}
                      -\sqrt{-3} & -2\phi_1  \\
                      \phi_2     & \sqrt{-3} \\
                  \end{pmatrix}$
        \item $M(t)=\begin{pmatrix}
                      1 & 0 \\
                      0 & 1 \\
                  \end{pmatrix}$
        \item $\Fil^1D_{K}:=(\alpha\tenso e_1 +1\tenso e_2)K$ for $\alpha\in K^{\langle x^2,t\rangle}$ such that $x(\alpha)=\frac{\alpha\sqrt{-3}-2\phi_2}{\alpha\phi_1-\sqrt{-3}}$
    \end{itemize}
\end{defn}

\begin{remark}\label{R: x,y,t}
    Note that the above description of $x,y,t$ does not determine them uniquely and that the definition of $D_{pc}(8,0,i,\alpha,\pm)$ depends strongly on the choice of $x,y,t$. For instance, if one chooses $\tilde{x},\tilde{y},t$, where $\tilde{x}=x^{-1},\tilde{y}=y^{-1}$, then the definition of $D_{pc}(8,0,i,\alpha,+)$ in terms of $\tilde{x},\tilde{y},t$ would have $$M(\tilde{x})=\begin{pmatrix}
                      \sqrt{-3} & -2\phi_2   \\
                      \phi_1    & -\sqrt{-3} \\
                  \end{pmatrix}\quad\text{and}\quad M(\tilde{y})=\begin{pmatrix}
                      -\sqrt{-3} & -2\phi_1  \\
                      \phi_2     & \sqrt{-3} \\
                  \end{pmatrix}$$ which implies $$M(x)=\begin{pmatrix}
                      -\sqrt{-3} & 2\phi_2   \\
                      -\phi_1    & \sqrt{-3} \\
                  \end{pmatrix}\quad\text{and}\quad M(y)=\begin{pmatrix}
                      \sqrt{-3} & 2\phi_1    \\
                      -\phi_2   & -\sqrt{-3} \\
                  \end{pmatrix}.$$ We see that the $\fpgm$ described as $D_{pc}(8,0,i,\alpha,+)$ in terms of $\tilde{x},\tilde{y},t$ is the module described as $D_{pc}(8,0,i,\alpha,-)$ in terms of $x,y,t$; the two modules are nonisomorphic. For this reason, throughout this section we keep $x,y,t$ fixed.
 
\end{remark}

\begin{proposition}\label{P: classification SD16}
       Let $L=L_{8,i}$ be an extension in Table~\ref{Ta: InTypes}. Let $D$ be a $\fpgmt$ satisfying ${\bf (A.2)}$ for $K/L$. Then $D\simeq D_{pc}(8,0,i,\alpha,\varepsilon)$ for some $\varepsilon\in\{+,-\}$ as an $\upgmt$. If, moreover, $D$ has Hodge--Tate weights $(0,1)$, then $D\simeq D_{pc}(8,0,i,\alpha,+)$ or $D\simeq D_{pc}(8,0,i,\alpha,-)$ for a unique $\alpha$.
\end{proposition}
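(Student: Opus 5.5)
We follow the pattern of Propositions~\ref{P: classification D3} and~\ref{P: classification D4}, working in the basis of Lemma~\ref{L: adapted basis}. By Lemma~\ref{L: a_2=0}, $a_2=0$, so $M(\varphi)=\tbt{0}{-2}{1}{0}$ and $M(t)=\Id$. Both $x$ and $y$ have order $4$, so Lemma~\ref{L: eigenvalues} gives $\Tr(x)=\Tr(y)=0$ and $\det=1$. We therefore write
\[M(x)=\tbt{\lambda}{\mu'}{\mu}{-\lambda},\qquad M(y)=\tbt{\lambda_y}{\nu'}{\nu}{-\lambda_y},\qquad \lambda^2+\mu\mu'=-1,\quad \lambda_y^2+\nu\nu'=-1,\]
with all entries in $\Q_4$.

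The first step is to impose the relations. Since $M(t)=\Id$, the relation $txt=y$ reads $M(y)=\sigma(M(x))$, so $y$ is determined by $x$. Commutation $\varphi x=x\varphi$ gives $M(\varphi)\sigma(M(x))=M(x)M(\varphi)$, which expresses $\sigma(\lambda),\sigma(\mu),\sigma(\mu')$ linearly in terms of $\lambda,\mu,\mu'$. In the same way as for $D_3$, this should give $\mu'=-2\sigma(\mu)$ and $\sigma(\lambda)=-\lambda$, so $\lambda=u_1\sqrt{-3}$ with $u_1\in\Q_2$ and $\mu\in\Q_4$ free. The commutation of $\varphi$ with $y$ then holds automatically by applying $\sigma$. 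It remains to impose $y^2=x^2$ (automatic, since both square to $-\Id$) and $y^{-1}xy=x^{-1}$, i.e.\ $M(x)M(y)=-M(y)M(x)$, which amounts to $\Tr(M(x)M(y))=0$. Writing $\mu=a+b\phi_1$ with $a,b\in\Q_2$ leaves the variety in $(u_1,a,b)$ cut out by $\det=1$ and this anticommutation. The model point is $u_1=1$, $\mu=\phi_1$; the relation $\phi_1\phi_2=-1$ gives $-3+2\phi_1\phi_2\cdot(-1)\cdot\ldots$, and we check directly that it lies on the variety.

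The second step, which I expect to be the main obstacle, is to show that every point of this variety is conjugate to $\pm$ the model point by an invertible $B$ with $BM(\varphi)=M(\varphi)\sigma(B)$ and $\sigma(B)=B$. The second condition forces $B\in M_2(\Q_2)$ commuting with $M(\varphi)$, i.e.\ $B=c_0\Id+c_1M(\varphi)$ with $c_0,c_1\in\Q_2$, so the available group is $\Q_2(\sqrt{-2})^\times$ acting by conjugation. As in the $D_4$ case, I would guess an explicit $B$ linear in the coordinates, for instance $B$ built from $M(x)+\varepsilon M_0(x)$ projected onto $\cl L(\Id,M(\varphi))$. I would verify it with a Gröbner basis computation in Magma, as the paper does elsewhere, and include separate formulas for the degenerate locus where this $B$ is singular. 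Dimension counting supports the claim: the solution set is a torsor-like family of dimension $1$ over $\Q_2$, and the stabilizer is the scalars. The sign $\varepsilon$ then labels the two orbits. To show they are distinct, I would argue as in Proposition~\ref{P: classification C4}: a $B$ commuting with $\varphi$ and $t$ and intertwining $M$ with $-M$ satisfies $BM_0(x)=-M_0(x)B$, and for $B=c_0+c_1M(\varphi)$ this forces $B=0$.

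The last step handles the filtration. Lemma~\ref{L: invariant subspaces} shows that every $\Gal(K/\Q_2)$-stable line in $D_K$ is admissible. The line $Ke_1$ is not $x$-stable because $\phi_1\neq0$, so we may write $\Fil^1D_K=K(\alpha\otimes e_1+1\otimes e_2)$. Stability under $t$ gives $\alpha\in K^{\langle t\rangle}$. Stability under $x$ gives the stated Möbius relation $x(\alpha)=\frac{\alpha\sqrt{-3}-2\phi_2}{\alpha\phi_1-\sqrt{-3}}$. Applying $x$ twice and using $M(x)^2=-\Id$ shows that $\alpha$ is fixed by $x^2$, so $\alpha\in K^{\langle x^2,t\rangle}$; stability under $y=txt$ then follows automatically. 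Finally, uniqueness of $\alpha$ follows from Lemma~\ref{L: unfiltered automorphism}. The span $\cl L(\Id,M(x))$ contains $\sqrt{-3}$ on the diagonal, so it meets $\cl L(\Id,M(\varphi))$ only in the scalars. Hence every unfiltered automorphism is scalar, and distinct $\alpha$ give nonisomorphic modules.
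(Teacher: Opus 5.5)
Your outline follows the paper's proof everywhere except the step that carries the weight. The relations $\sigma(\lambda)=-\lambda$, $\mu'=-2\sigma(\mu)$, $M(y)=\sigma(M(x))$, the reduction of $y^{-1}xy=x^{-1}$ to $\Tr(M(x)M(y))=0$, the filtration analysis, uniqueness of $\alpha$ via Lemma~\ref{L: unfiltered automorphism}, and the $c_0\Id+c_1M(\varphi)$ argument separating the two signs all match the paper. The gap is your second step. You assert that every solution is conjugate to $\pm$ the model by some $B=c_0\Id+c_1M(\varphi)$, with exactly two classes. Your only support is a dimension count and a promised Gr\"obner computation, and a dimension count proves neither claim. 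A one-dimensional torus acting with finite stabilizers on a curve has open geometric orbits, but the $\Q_2$-points of the curve can split into several orbits of $\Q_2$-points, and the count does not say how many. Your separation argument shows at least two classes, not at most two. Your check of the model point is also garbled; it should read $\det M(x)=3+2\phi_1\phi_2=1$ and $\Tr(M(x)M(y))=6-2(\phi_1^2+\phi_2^2)=0$.

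The missing idea is to combine the two remaining equations. With $\lambda=u_1\sqrt{-3}$ and $\mu'=-2\sigma(\mu)$, the condition $\det M(x)=1$ reads $3u_1^2-\mu\mu'=1$, and anticommutation reads $6u_1^2=\tfrac{\mu'^2}{2}+2\mu^2$. Together they give $(\mu+\sigma(\mu))^2=1$, so $\mu+\sigma(\mu)=\varepsilon\in\{\pm1\}$; this is where the sign comes from. It is a genuine invariant: $\tfrac{\varepsilon}{2}$ is the coefficient of $M(\varphi)$ in $M(x)=\tfrac{\varepsilon}{2}M(\varphi)+N$, with $N$ anticommuting with $M(\varphi)$, and the centralizer fixes $M(\varphi)$. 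For $\varepsilon=+1$ one gets $\mu=\tfrac12+u_2\sqrt5$ and the conic $6u_1^2-20u_2^2=1$. The paper then exhibits $B=3(u_1+1)\Id-\sqrt{-15}(u_2-\tfrac12)M(\varphi)$, replaced by $\tbt{-\sqrt{-15}}{-6}{3}{-\sqrt{-15}}$ at the single point $(u_1,u_2)=(-1,\tfrac12)$ where it vanishes.

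You can also close the gap without Gr\"obner bases. For $z=c_0\Id+c_1M(\varphi)$ and $\bar z=c_0\Id-c_1M(\varphi)$ one has $zM(x)z^{-1}=\tfrac{\varepsilon}{2}M(\varphi)+N\bar z z^{-1}$. The possible $N=\tbt{u_1\sqrt{-3}}{2u_2\sqrt5}{u_2\sqrt5}{-u_1\sqrt{-3}}$, $u_1,u_2\in\Q_2$, form a free rank-one right module over $\Q_2[M(\varphi)]\simeq\Q_2(\sqrt{-2})$ on $N_1=\tbt{\sqrt{-3}}{0}{0}{-\sqrt{-3}}$, with $\det(N_1w)=3\operatorname{Nm}(w)$. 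The constraint $\det M(x)=1$ becomes $\det N=\tfrac12$, i.e. $\operatorname{Nm}(w)=\tfrac16$. By Hilbert~90, $\bar z/z$ runs over all norm-one elements, so the centralizer modulo scalars acts simply transitively on each sign family. The paper's $B$ is an explicit instance of this.
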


\begin{proof}
    We work in the basis provided by~\ref{L: adapted basis}. Since $\zeta_4+\zeta_4^{-1}=0$, Lemma~\ref{L: eigenvalues} implies that the minimal polynomial of each of $x$ and $y$ acting on $D$ is $X^2+1$. Thus, in this basis,

    $$M(x)=\begin{pmatrix}
            \lambda & \mu'     \\
            \mu     & -\lambda \\
        \end{pmatrix}\quad M(y)=\begin{pmatrix}
            \gamma & \delta' \\
            \delta & -\gamma \\
        \end{pmatrix}\quad M(t)=\begin{pmatrix}
            1 & 0 \\
            0 & 1 \\
        \end{pmatrix}$$

    In particular, $M(x)^{-1}=-M(x)$ and $M(y)^{-1}=-M(y)$. Moreover, by Lemma~\ref{L: a_2=0}, we have $a_2=0$.

    From the relation $txt=y$, or equivalently $tx=yt$, we obtain $M(x)=t(M(y))$, or equivalently: $$t(\lambda)=\gamma\quad t(\mu)=\delta\quad t(\mu')=\delta'$$ From $\varphi x=x\varphi$ and $\varphi y=y\varphi$ we obtain
   respectively,
$$  t(\lambda)=-\lambda,\qquad
    t(\mu)=-\frac{\mu'}{2},\qquad
    t(\mu')=-2\mu,$$
and
$$
    t(\gamma)=-\gamma,\qquad
    t(\delta)=-\frac{\delta'}{2},\qquad
    t(\delta')=-2\delta.
$$

   Since $t(\lambda)=-\lambda$, we have $\lambda=u_1\sqrt{-3}$ with $u_1\in\Q_2$. The only equation imposed by the relation $xy=yx^{-1}$ that does not follow from the previous equations is $$ 3u_1^2+\mu'\delta=-3u_1^2-\mu\delta'.$$ Since $\delta=t(\mu)=\frac{-\mu'}{2}$ and
    $\delta'=t(\mu')=-2\mu$, the above equations are equivalent to $$
        6u_1^2=\frac{(\mu')^2}{2}+2\mu^2 $$ Then the condition $\det(M(x))=1$ reads
    $3u_1^2-\mu\mu'=1$. Substituting this into the previous equation gives $$
        (\mu-\frac{\mu'}{2})^2=1 \iff -\frac{\mu'}{2}=t(\mu)=\pm 1-\mu.$$ We will show later that the two
    possible signs give nonisomorphic solutions. Since the reasoning is analogous in both cases, we work out only
    the case with the $+$ sign. In this case, $\mu=\frac{1}{2}+u_2\sqrt{5}$ with $u_2\in\Q_2$, and
    so $\mu'=-1+2u_2\sqrt{5}$. Substituting these expressions back into $\det(M(x))=1$, we get the equation $$ u_2=\pm\sqrt{\frac{1-6u_1^2}{-20}}.$$ Thus the matrices of $x$
    and $y$ for $D_{pcris}^{*}(V_2(E))$ are of the form

    $$M(x)=\begin{pmatrix}
            u_1\sqrt{-3}            & -1+2u_2\sqrt{5} \\
            \frac{1}{2}+u_2\sqrt{5} & -u_1\sqrt{-3}   \\
        \end{pmatrix}\quad M(y)=\begin{pmatrix}
            -u_1\sqrt{-3}           & -1-2u_2\sqrt{5} \\
            \frac{1}{2}-u_2\sqrt{5} & u_1\sqrt{-3}    \\
        \end{pmatrix}$$

    The matrix $$B=\begin{pmatrix}
            3(u_1+1)                     & 2\sqrt{-15}(u_2-\frac{1}{2}) \\
            -\sqrt{-15}(u_2-\frac{1}{2}) & 3(u_1+1)                     \\
        \end{pmatrix}~\text{if}~(u_1,u_2)\neq(-1,\frac{1}{2})\quad\text{and}\quad B=\tbt{-\sqrt{-15}}{-6}{3}{-\sqrt{-15}}~\text{otherwise}$$ defines a change of basis that sends the above matrices to the ones in the definition of $D_{pc}(8,0,\alpha,+)$. The case $t(\mu)=-1-\mu$ follows analogously and the computations can be found in~\cite{MAGMAFiles}. This proves the unfiltered isomorphism.

    By Lemma~\ref{L: invariant subspaces}, any $\Gal(K/\Q_2)$-stable line defines an admissible filtration. The relation $y=txt$ implies that a line stable under $x$ and $t$ is automatically stable under $y$ and thus under $\Gal(K/\Q_2)$, so we only need to impose stability under $t$ and $x$. The line generated by $e_1$ is not stable under $\Gal(K/\Q_2)$, so we can assume that $\Fil^{1}D_K$ is generated by $\alpha\otimes e_1+1\otimes e_2$. Such a line is stable under $t$ if and only if $t(\alpha)=\alpha$; thus $\alpha\in
        K^{\langle t \rangle}=L$. It follows that such a line is stable under $\Gal(K/\Q_2)$ if and only if $\alpha\in L$ and $\alpha=\frac{x(\alpha)\sqrt{-3}-2\phi_2}{x(\alpha)\phi_1-\sqrt{-3}}$ or equivalently $x(\alpha)=\frac{\alpha\sqrt{-3}-2\phi_2}{\alpha\phi_1-\sqrt{-3}}.$ From this equation, it follows that $x^2(\alpha)=\alpha$, and so $y^2(\alpha)=\alpha$; thus $\alpha$ is fixed by $x^2=y^2$, so $\alpha\in K^{\langle t,x^2 \rangle}$.

    Each such $\alpha$ gives rise to a nonisomorphic $\fpgm$ by Lemma~\ref{L: unfiltered automorphism}.

    For $t(\mu)=-1-\mu$, the same reasoning gives $D_{pc}(8,0,i,\alpha,-)$, which is not isomorphic to $D_{pc}(8,0,i,\alpha,+)$. Indeed, an isomorphism $f$ between the two would satisfy $tf=ft$ so it must be defined over $\Q_2$ and then $\varphi\circ f=f\circ \varphi$ implies $M(f)=c_0Id+c_1M(\varphi)$, but there are no nonzero values of $c_0,c_1$ for which $M(f)M(x)^+=M(x)^{-}M(f)$.

    See~\cite{MAGMAFiles} for all the explicit computations of this proof.
\end{proof}

\begin{corollary}\label{Cor: classification for SD16}
    Let $E/\Q_2$ have inertia type associated with $L_{8,i}$. Then $D_{pcris}^{*}(V_2(E))\simeq D_{pc}(8,0,i,\alpha,+)$ or $D_{pcris}^{*}(V_2(E))\simeq D_{pc}(8,0,i,\alpha,-)$ for a unique $\alpha$.
\end{corollary}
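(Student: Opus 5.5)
This corollary follows the same pattern as Corollaries~\ref{Cor: classification for D3}, \ref{Cor: classification for D4} and~\ref{Cor: classification for C4}. The plan is to verify the hypotheses of Proposition~\ref{P: classification SD16} for $D:=D_{cris,K}^{*}(V_2(E))$, where $K$ is the Galois closure of $L=L_{8,i}$, and then read off the conclusion.

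First, by Lemma~\ref{L: InTypes}, $E$ acquires good reduction over $L_{8,i}$. By the discussion at the start of \S3, $D_{pcris}^{*}(V_2(E))$ is represented by the $\fpgmt$ $D$. The field $L_{8,i}$ is totally ramified and satisfies $K=L\cdot K_0$ with $[K_0:\Q_2]=2$, as noted in the remark after Lemma~\ref{L: t exists}; here $\Gal(K/\Q_2)\simeq \SD_{16}$ and $I_{K/\Q_2}\simeq Q_8$, so $K_0=\Q_4$. Lemma~\ref{L: curves satisfy A} then shows that $D$ satisfies condition ${\bf(A)}$, hence ${\bf (A.2)}$, for $K/L$, with $t$ the fixed Frobenius of this section.

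Second, by Lemma~\ref{L: Conditions on DVpE}, $D$ has Hodge--Tate weights $(0,1)$: $\dim_K\Fil^1D_K=1$ and $\dim_K\Fil^2D_K=0$. Both hypotheses of Proposition~\ref{P: classification SD16} are therefore satisfied. That proposition gives an isomorphism of $\fpgmt$s $D\simeq D_{pc}(8,0,i,\alpha,\varepsilon)$ for some $\varepsilon\in\{+,-\}$ and a unique $\alpha$.

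Uniqueness of $\alpha$ is exactly the nonisomorphism statement proved there, which uses Lemma~\ref{L: unfiltered automorphism}: every unfiltered automorphism is scalar, so distinct stable lines give nonisomorphic filtered modules. Since $D_{pc}(8,0,i,\alpha,+)\not\simeq D_{pc}(8,0,i,\beta,-)$ was also established, the sign $\varepsilon$ is determined as well.

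The only point needing care is that the presentation $x,y,t$ used to define $D_{pc}(8,0,i,\alpha,\pm)$ is fixed once and for all (Remark~\ref{R: x,y,t}). The $t$ in Lemma~\ref{L: adapted basis} must be the same element, which is legitimate because $t$ was chosen as in Lemma~\ref{L: t exists} with $\langle t\rangle=\Gal(K/L)$. Beyond this bookkeeping there is no real obstacle; the corollary is a direct combination of Lemma~\ref{L: curves satisfy A}, Lemma~\ref{L: Conditions on DVpE} and Proposition~\ref{P: classification SD16}.
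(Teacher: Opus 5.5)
Your proposal is correct and follows the paper's argument: the paper proves the corollary in one line by combining Lemma~\ref{L: curves satisfy A} with Proposition~\ref{P: classification SD16}. Your additional bookkeeping (Hodge--Tate weights via Lemma~\ref{L: Conditions on DVpE}, $[K_0:\Q_2]=2$ giving ${\bf (A.2)}$, and consistency of the fixed $t$) only makes explicit what the paper leaves implicit.
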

\begin{proof}
    This follows directly from Lemma~\ref{L: curves satisfy A} and Proposition~\ref{P: classification SD16}.  
\end{proof}

\begin{lemma}\label{L: Twist for e=8}
    $D_{pc}(8,0,i,\alpha,+)\otimes D_{\eta}=D_{pc}(8,0,i,\frac{\sqrt{-15}\alpha+6}{3\alpha-\sqrt{-15}},-)$
\end{lemma}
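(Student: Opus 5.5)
The plan is to compute the tensor product directly, following the model of Lemma~\ref{L: Twist computation D6}, and then find an explicit change of basis that puts it in the normal form of $D_{pc}(8,0,i,\beta,-)$.

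\textbf{Base change of $D_\eta$.} First I base change $D_\eta$ of Lemma~\ref{L: D eta} to a filtered $(\varphi,\Gal(K/\Q_2))$-module, as explained in \S2. Since $\eta$ is unramified and $K_0=\Q_4$, the result is $\Q_4 e_\eta$ with:
\begin{itemize}
\item $M(\varphi_\eta)=-1$, now $\sigma$-semilinear;
\item $M(x)=M(y)=1$, because inertia acts trivially;
\item $M(t)=1$, because $t$ restricts to $\sigma$ on $K_0$ and $e_\eta$ comes from the $\Q_2$-structure;
\item $\Fil^0=D_K$ and $\Fil^1=0$.
\end{itemize}

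\textbf{The tensor product.} In the basis $e_j\otimes e_\eta$, the module $D:=D_{pc}(8,0,i,\alpha,+)\otimes D_\eta$ has:
\begin{itemize}
\item $M(\varphi)=-\tbt{0}{-2}{1}{0}$;
\item $M(x),M(y),M(t)$ unchanged, namely the $+$ matrices and $\Id$;
\item filtration line $K(\alpha\otimes e_1+1\otimes e_2)$, since $D_\eta$ has Hodge--Tate weight $0$.
\end{itemize}

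\textbf{Change of basis.} I look for $B\in\GL_2(\Q_2)$. Taking $B$ rational makes it commute with the matrix $\Id$ of $t$, so $M(t)$ stays $\Id$; this is possible since $\sqrt{-15}\in\Q_2$. The new matrices are $BM(\cdot)B^{-1}$, so $B$ must satisfy
\[
B\Bigl(-\tbt{0}{-2}{1}{0}\Bigr)=\tbt{0}{-2}{1}{0}B,\qquad BM(x)^{+}=M(x)^{-}B .
\]
Here $M(x)^{-}=-M(x)^{+}$. Since $M(y)=t(M(x))$ and $B$ is $t$-fixed, the condition for $y$ follows automatically. The desired Möbius transformation of $\alpha$ suggests the candidate
\[
B=\tbt{\sqrt{-15}}{6}{3}{-\sqrt{-15}},\qquad \det B=-3\neq 0 .
\]
The first condition is immediate: $B$ anticommutes with $\tbt{0}{-2}{1}{0}$, since $B\tbt{0}{-2}{1}{0}=\tbt{6}{-2\sqrt{-15}}{-\sqrt{-15}}{-6}$ is exactly the negative of $\tbt{0}{-2}{1}{0}B$.

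\textbf{Main obstacle.} The key step is the second condition, $BM(x)^{+}=-M(x)^{+}B$, i.e.\ $B$ anticommutes with $M(x)^{+}$ as well. This is an identity in $M_2(\Q_4)$ involving $\sqrt{-3}$, $\phi_1$, $\phi_2$ and $\sqrt{-15}=\sqrt5\sqrt{-3}$. I would verify it entrywise using $\phi_1+\phi_2=1$, $\phi_1\phi_2=-1$ and $\phi_1-\phi_2=\pm\sqrt5$. The sign convention for $\phi_2=t(\phi_1)$ relative to the fixed $\sqrt5$ may force adjusting the signs of the off-diagonal entries of $B$, and correspondingly the formula, so this is where care is needed. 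A Magma check as in \cite{MAGMAFiles} would confirm it.

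\textbf{Filtration.} Applying $B$ to the coordinate vector $(\alpha,1)^T$ of the filtration line gives $(\sqrt{-15}\alpha+6,\ 3\alpha-\sqrt{-15})^T$. So the line becomes $K(\beta\otimes e_1'+1\otimes e_2')$ with $\beta=\frac{\sqrt{-15}\alpha+6}{3\alpha-\sqrt{-15}}$, after dividing by $3\alpha-\sqrt{-15}$ (with the projective convention if this vanishes). The resulting object has the $\varphi$, $x$, $y$, $t$ matrices of $D_{pc}(8,0,i,\cdot,-)$ and a $\Gal(K/\Q_2)$-stable filtration line. By Proposition~\ref{P: classification SD16}, the line automatically satisfies the stability equation for the $-$ module, so the tensor product is $D_{pc}(8,0,i,\beta,-)$, as claimed.
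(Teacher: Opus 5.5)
Your argument is the paper's proof: the same base change of $D_\eta$, the same rational matrix $B$, and the same transformation of the filtration line. Since $B^2=3\Id$, your convention $BMB^{-1}$ and the paper's $MB=BM^{-}$ give identical conditions and the same M\"obius map on $\alpha$. Your deductions of the $y$-condition by applying $\sigma$, and of the stability equation for $\beta$ by transport of structure, are slightly cleaner than the paper's ``it can be checked''. The anticommutation you flagged holds exactly with the paper's conventions $\phi_2=(1-\sqrt5)/2$ and $\sqrt{-15}=\sqrt5\sqrt{-3}$: one gets $BM(x)^{+}=\tbt{3}{-2\sqrt{-3}\phi_1}{\sqrt{-3}\phi_2}{-3}=-M(x)^{+}B$, so no sign adjustment is needed.
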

\begin{proof}
    Let $D:=D_{pc}(8,0,i,\alpha,+)\otimes D_{\eta}$ and denote by $M^+,M^-$ and $M$ the matrices describing the operators of $D_{pc}(8,0,i,\alpha,+),D_{pc}(8,0,i,\alpha,-)$ and $D$, respectively.

    The $\fpgmt$ corresponding to $D_{\eta}$ is $D_{\eta}=\Q_4e_{\eta}$, $M(\varphi)=-\Id$, $M(x)=M(y)=M(t)=\Id$ (recall that $t$ acts semilinearly) with filtration $\Fil^{0}D_{\eta,K}=D_{\eta,K}$ and $\Fil^{1}D_{\eta,K}=0$. Thus, we have $$M(\varphi)=-M^{+}(\varphi),~M(x)=M^{+}(x), M(y)=M^{+}(y)\quad\text{and}\quad \Fil^1D_{K}=(\alpha\tenso e_1 +1\tenso e_2)K.$$
    The change of basis given by the matrix $B=\tbt{\sqrt{-15}}{6}{3}{-\sqrt{-15}}\in M_2(\Q_2)$ satisfies $$M(\varphi)B=B M^{-}(\varphi),~M(x)B=BM^{-}(x),~M(y)B=BM^{-}(y),$$ and it sends the line $(\alpha\tenso e_1 +1\tenso e_2)K$ to the line $(\beta\tenso e_1 +1\tenso e_2)K$, where $\beta=\frac{\sqrt{-15}\alpha+6}{3\alpha-\sqrt{-15}}$. It can be checked that if $x(\alpha)=\frac{\alpha\sqrt{-3}-2\phi_2}{\alpha\phi_1-\sqrt{-3}}$, then $x(\beta)=\frac{\beta\sqrt{-3}-2\phi_2}{\beta\phi_1-\sqrt{-3}}$ and so the result follows.

\end{proof}

\section{Semistability defect $e=24$}

Let $E/\Q_2$ be an elliptic curve with semistability defect $e=24$. By
Lemma~\ref{L: InTypes}, any such curve attains good reduction over $L=L_{24,i}$ for some $i$. Let $K$ be the Galois closure. Then $\Gal(K/\Q_2)\simeq \GL_2(\F_3)$. This group can be described as
a semidirect product
\[\Gal(K/\Q_2)\simeq I_{K/\Q_2}\rtimes\langle Frob \rangle =\langle x,y\rangle\rtimes\langle t \rangle=\langle
    x,y,t~|~x^3=y^3=(xy)^2,~t^2=1,~txt=y\rangle\]

    Although $x,y,t$ are not uniquely determined by the above description, as in the case $e=8$, we fix a choice of $x,y,t$ for the rest of this section; see Remark~\ref{R: x,y,t}.
\begin{defn}
    Let $K$ be one of the fields described above and let $\zeta_6=\frac{1+\sqrt{-3}}{2}$ be a primitive sixth root of unity. We define the $\fpgm D_{pc}(24,0,i,\alpha,\pm)$ by:

    \begin{itemize}
        \item $D:=\Q_4e_1\oplus\Q_4e_2$
        \item $M(\varphi)=\begin{pmatrix}
                      0 & -2 \\
                      1 & 0  \\
                  \end{pmatrix}$
        \item $M(x)=\begin{pmatrix}
                      \phi_1    & -2\zeta_6^{-1} \\
                      \zeta_{6} & \phi_2         \\
                  \end{pmatrix}$  for $D_{pc}(24,0,i,\alpha,+)$ $\quad$ and $\quad$ $M(x)=\begin{pmatrix}
                      \phi_1     & 2\zeta_6^{-1} \\
                      -\zeta_{6} & \phi_2        \\
                  \end{pmatrix}$ for $D_{pc}(24,0,i,\alpha,-)$
        \item $M(y)=\begin{pmatrix}
                      \phi_2         & -2\zeta_6 \\
                      \zeta_{6}^{-1} & \phi_1    \\
                  \end{pmatrix}$ for $D_{pc}(24,0,i,\alpha,+)$ $\quad$ and $\quad$ $M(y)=\begin{pmatrix}
                      \phi_2          & 2\zeta_6 \\
                      -\zeta_{6}^{-1} & \phi_1   \\
                  \end{pmatrix}$ for $D_{pc}(24,0,i,\alpha,-)$
        \item $M(t)=\begin{pmatrix}
                      1 & 0 \\
                      0 & 1 \\
                  \end{pmatrix}$
        \item $\Fil^1D_{K}:=(\alpha\tenso e_1 +1\tenso e_2)K$ for $\alpha\in K^{\langle x^3,t\rangle}$ such that $$ x(\alpha)=\frac{-\phi_2\alpha-2\zeta_6^{-1}}{\zeta_6\alpha-\phi_1}\quad\text{for}~D_{pc}(24,0,i,\alpha,+)\quad\quad x(\alpha)=\frac{\phi_2\alpha-2\zeta_6^{-1}}{\zeta_6\alpha+\phi_1}\quad\text{for}~D_{pc}(24,0,i,\alpha,-)$$
    \end{itemize}
\end{defn}

\begin{proposition}\label{P: classification GL(2,3)}
       Let $L=L_{24,i}$ be an extension in Table~\ref{Ta: InTypes}. Let $D$ be a $\fpgmt$ satisfying ${\bf (A.2)}$ for $K/L$. Then $D\simeq D_{pc}(24,0,i,\alpha,\varepsilon)$ for some $\varepsilon\in\{+,-\}$ as an $\upgmt$. If, moreover, $D$ has Hodge--Tate weights $(0,1)$, then $D\simeq D_{pc}(24,0,i,\alpha,+)$ or $D\simeq D_{pc}(24,0,i,\alpha,-)$ for a unique $\alpha$.
\end{proposition}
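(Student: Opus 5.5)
We follow the pattern of the proof of Proposition~\ref{P: classification SD16}, since $\SL_2(\F_3)\supset Q_8$ and the relation $txt=y$ has the same shape. First we work in the basis given by Lemma~\ref{L: adapted basis}, so that $M(t)=\Id$ and, by Lemma~\ref{L: a_2=0}, $M(\varphi)=\tbt{0}{-2}{1}{0}$. In the presentation, $x$ and $y$ have order $6$: $x^3=y^3=(xy)^2$ is the central element of order $2$, which acts faithfully and with determinant $1$, hence as $-\Id$. Lemma~\ref{L: eigenvalues} then gives $\Tr M(x)=\Tr M(y)=\zeta_6+\zeta_6^{-1}=1$. Accordingly we write
\[
M(x)=\tbt{\lambda}{\mu'}{\mu}{1-\lambda},\qquad M(y)=\tbt{\gamma}{\delta'}{\delta}{1-\gamma}.
\]
The relation $tx=yt$ with $M(t)=\Id$ gives $M(y)=t(M(x))$, so $M(y)$ is determined by $M(x)$. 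Commutation $\varphi x=x\varphi$ gives $M(\varphi)t(M(x))=M(x)M(\varphi)$, which yields $t(\lambda)=1-\lambda$, $t(\mu)=-\mu'/2$ and $t(\mu')=-2\mu$. Hence $\lambda=\tfrac12+u_1\sqrt5$ with $u_1\in\Q_2$ and $\mu'=-2t(\mu)$, consistently with $\phi_1,\phi_2$ in the definition.

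The remaining constraints are $\det M(x)=1$ and the group relation $(xy)^2=x^3$, i.e.\ $(M(x)t(M(x)))^2=-\Id$. Since $\det(xy)=1$, this is equivalent to $\Tr(M(x)t(M(x)))=0$. Write $\mu=a+b\sqrt{-3}$-type coordinates, i.e.\ split $\mu$ into $t$-invariant and $t$-anti-invariant parts over $\Q_2$. Then the trace condition becomes a quadratic equation in $\Q_2$-unknowns. Combined with the determinant condition, as in the $e=8$ case, we expect it to factor as a square equal to a constant, giving two sign branches $\varepsilon=\pm$. Each branch yields a one-parameter conic family of matrices.

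For each branch we will exhibit, by a Gröbner basis computation as in the earlier cases, an explicit change of basis $B$ with entries in $\Q_2$-combinations of $\Id$ and $M(\varphi)$ and with $t(B)=B$, hence commuting with $\varphi$ and $t$. This $B$ conjugates $M(x)$ to the matrix in the definition of $D_{pc}(24,0,i,\alpha,\pm)$; the relation then forces $M(y)$. As before, $B$ will need a separate formula on one exceptional point of the conic. Solving for $B$ explicitly and verifying it over all solutions is the main obstacle. We would parametrize $B=c_0\Id+c_1M(\varphi)$ with $c_i\in\Q_2$, which is forced by Lemma~\ref{L: unfiltered automorphism}, and solve the linear system $BM_{\mathrm{std}}(x)=M(x)B$. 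To show $+$ and $-$ are not isomorphic, note that any isomorphism is again of the form $c_0\Id+c_1M(\varphi)$, and check that the intertwining equation forces $c_0=c_1=0$.

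For the filtration, Lemma~\ref{L: invariant subspaces} shows that every $\Gal(K/\Q_2)$-stable line is admissible. The line $Ke_1$ is not $x$-stable, so $\Fil^1D_K=(\alpha\otimes e_1+1\otimes e_2)K$. Stability under $t$ gives $\alpha\in L$. Since $y=txt$ and $x,t$ generate, stability under $x$ suffices; it is the stated Möbius relation for $x(\alpha)$. Because $x^3$ acts as $-\Id$, this forces $x^3(\alpha)=\alpha$, so $\alpha\in K^{\langle x^3,t\rangle}$. Finally, uniqueness of $\alpha$ follows from Lemma~\ref{L: unfiltered automorphism}, using $g=x$ with the nonscalar $M(x)$. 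One checks that $\cl L(\Id,M(x))\cap\cl L(\Id,M(\varphi))=\cl L(\Id)$ because $M(x)$ has nonzero diagonal difference $\phi_1-\phi_2$ while $M(\varphi)$ has zero diagonal.
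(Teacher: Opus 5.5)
Your proposal is correct and follows the paper's proof step for step. The steps match: the adapted basis, $\Tr M(x)=1$, $M(y)=t(M(x))$, the $\varphi$-commutation equations, the group relation reducing (with $\det M(x)=1$) to $(\mu+t(\mu))^2=1$ and hence the two sign branches, a $\Q_2$-rational change of basis lying in $\cl L(\Id,M(\varphi))$, and the same filtration and uniqueness arguments via Lemmas~\ref{L: invariant subspaces} and~\ref{L: unfiltered automorphism}. The computation you leave open does go through: on the $+$ branch the paper takes $B=5(u_1+\tfrac12)\Id+\sqrt{-15}\,(u_2-\tfrac12)M(\varphi)$, which is invertible except at $(u_1,u_2)=(-\tfrac12,\tfrac12)$, where it uses $B=2\sqrt{-15}\,\Id+5M(\varphi)$ instead, exactly the shape you predicted.
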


\begin{proof}
    We work in the basis provided by~\ref{L: adapted basis}. Since $\zeta_6+\zeta_6^{-1}=1$, Lemma~\ref{L: eigenvalues} implies that the minimal polynomial of each of $x$ and $y$ acting on $D$ is $X^2-X+1$. Thus, in this basis,

   $$M(x)=\begin{pmatrix}
            \lambda & \mu'      \\
            \mu     & 1-\lambda \\
        \end{pmatrix}\quad M(y)=\begin{pmatrix}
            \gamma & \delta'  \\
            \delta & 1-\gamma \\
        \end{pmatrix}\quad M(t)=\begin{pmatrix}
            1 & 0 \\
            0 & 1 \\
        \end{pmatrix}$$

    In particular, $M(x)^{3}=-\Id$ and $M(y)^{3}=-\Id$. Moreover, by Lemma~\ref{L: a_2=0}, we have $a_2=0$.

    From the condition $tx=yt$, we deduce that $t(M(x))=M(y)$; that is, $$
        t(\lambda)=\gamma,\quad t(\mu)=\delta,\quad t(\mu')=\delta'. $$ 
        From $\varphi\circ x=x\circ \varphi$ and $\varphi\circ y=y\circ \varphi$ we obtain the equations $$-2t(\mu)=\mu',\quad -2+2t(\lambda)=-2\lambda,\quad
        t(\lambda)=1-\lambda,\quad t(\mu')=-2\mu$$ $$-2t(\delta)=\delta',\quad
        -2+2t(\gamma)=-2\gamma,\quad t(\gamma)=1-\gamma,\quad t(\delta')=-2\delta$$
        These equations directly imply $\lambda=\frac{1}{2}+u_1\sqrt{5}$  with $u_1\in \Q_2$ and, combined with those arising from $tx=yt$, yield $$ \gamma=1-\lambda,\quad
        \delta=\frac{-\mu'}{2},\quad \delta'=-2\mu. $$

    The condition $xyxy=x^3=-Id$ implies $xy=-(xy)^{-1}$. In particular, this imposes the condition $$ -\lambda^2+\lambda-\frac{\mu'^2}{4}-\mu^2=0 $$ which, using $\det(x)=1$, can be rewritten as $$ (\mu-\frac{\mu'}{2})^2=1  \iff -\frac{\mu'}{2}=t(\mu)=\pm 1-\mu$$ As in the case $e=8$, we give the details only for the $+$ case, the $-$ case
    being completely analogous.
    
    In the $+$ case, $\mu=\frac{1}{2}+u_2\sqrt{-3}$ with $u_2\in\Q_2$, and
    $\mu'=-1+2u_2\sqrt{-3}$. Moreover, substituting back into $\det(x)=1$, we
    obtain $u_2=\pm\frac{1}{2}\sqrt{\frac{1+20u_1^2}{6}}$. Thus the matrices of $x$ and $y$ for $D_{pcris}^{*}(V_2(E))$ are of the form $$M(x)=\begin{pmatrix}
            \frac{1}{2}+u_1\sqrt{5}  & -1+2u_2\sqrt{-3}        \\
            \frac{1}{2}+u_2\sqrt{-3} & \frac{1}{2}-u_1\sqrt{5} \\
        \end{pmatrix}\quad M(y)=\begin{pmatrix}
            \frac{1}{2}-u_1\sqrt{5}  & -1-2u_2\sqrt{-3}        \\
            \frac{1}{2}-u_2\sqrt{-3} & \frac{1}{2}+u_1\sqrt{5} \\
        \end{pmatrix}$$

    The matrix
    $$B=\tbt{5(u_1+\frac{1}{2})}{-2\sqrt{-15}(u_2-\frac{1}{2})}{\sqrt{-15}(u_2-\frac{1}{2})}{5(u_1+\frac{1}{2})}~\text{if}~(u_1,u_2)\neq(-\frac{1}{2},\frac{1}{2})\quad\text{and}\quad B=\tbt{2\sqrt{-15}}{-10}{5}{2\sqrt{-15}}~\text{otherwise}$$ defines a change of basis that sends these matrices to the ones in the
    definition of $D_{pc}(24,0,i,\alpha,+)$.

 Moreover, the two possible choices of sign
    for $u_2$ produce isomorphic objects. See~\cite{MAGMAFiles} for the explicit
    computations. By Lemma~\ref{L: invariant subspaces}, any
    $\Gal(K/\Q_2)$-stable $K$-line in $D_K$ leads to an admissible $\fpgm$.

     Since $y=txt$, it is enough to look for lines stable under $t$ and $x$. The line
    $e_1K$ is not $\Gal(K/\Q_2)$-stable, so we can assume that
    $\Fil^{1}D_{K}=(\alpha\otimes e_1+1\otimes e_2)K$. Such a line is stable under $t$
    if and only if $t(\alpha)=\alpha$, and it is stable under $x$ if and 
    only if $$\alpha=\frac{\phi_1x(\alpha)-2\zeta_6^{-1}}{\zeta_6x(\alpha)+\phi_2}$$ which is equivalent to $$ x(\alpha)=\frac{-\phi_2\alpha-2\zeta_6^{-1}}{\zeta_6\alpha-\phi_1}$$

    Applying $x$ repeatedly, we see that $x^3(\alpha)=\alpha$; thus $\alpha$ is fixed by $x^3=y^3$ and by $t$, so $\alpha\in
        K^{\langle x^3,t \rangle}$ as desired.

    Each such $\alpha$ gives rise to a nonisomorphic $\fpgm$ by Lemma~\ref{L: unfiltered automorphism}.
    
    In the case $t(\mu)=-1-\mu$, the same arguments lead to $D_{pc}(24,0,i,\alpha,-)$. As in the case $e=8$, one can check that $D_{pc}(24,0,i,\alpha,+)$ and $D_{pc}(24,0,i,\alpha,-)$ are nonisomorphic.
\end{proof}

\begin{corollary}\label{Cor: classification for GL(2,3)}
    Let $E/\Q_2$ have inertia type associated with $L_{24,i}$. Then $D_{pcris}^{*}(V_2(E))\simeq D_{pc}(24,0,i,\alpha,+)$ or $D_{pcris}^{*}(V_2(E))\simeq D_{pc}(24,0,i,\alpha,-)$ for a unique $\alpha$.
\end{corollary}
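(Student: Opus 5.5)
This follows the pattern of Corollaries~\ref{Cor: classification for D3}, \ref{Cor: classification for D4} and~\ref{Cor: classification for SD16}. We combine Lemma~\ref{L: curves satisfy A} with Proposition~\ref{P: classification GL(2,3)}.

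First, let $E/\Q_2$ have inertia type associated with $L=L_{24,i}$, and let $K$ be the Galois closure of $L$. By Lemma~\ref{L: InTypes}, $E$ attains good reduction over $L$, and $L$ is minimal with this property. By the remark following Lemma~\ref{L: t exists}, the field $L$ satisfies the hypotheses of that lemma. Here $f=2$, since $\Gal(K/\Q_2)\simeq\GL_2(\F_3)$ has order $48$ while the inertia group $\SL_2(\F_3)$ has order $24$. So there is a Frobenius lift $t$ with $\Gal(K/L)=\langle t\rangle$. Lemma~\ref{L: curves satisfy A} then shows that $D:=D_{cris,K}^{*}(V_2(E))$ satisfies condition ${\bf (A)}$ for $K/L$. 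Since $[K_0:\Q_2]=2$, it in fact satisfies ${\bf (A.2)}$.

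Next, by Lemma~\ref{L: Conditions on DVpE}, $D$ has Hodge--Tate weights $(0,1)$: $\dim_K\Fil^0D_K=2$, $\dim_K\Fil^1D_K=1$ and $\dim_K\Fil^2D_K=0$. The hypotheses of Proposition~\ref{P: classification GL(2,3)} therefore hold. It gives an isomorphism of filtered $(\varphi,\Gal(K/\Q_2))$-modules $D\simeq D_{pc}(24,0,i,\alpha,\varepsilon)$ for some sign $\varepsilon\in\{+,-\}$ and some $\alpha\in K^{\langle x^3,t\rangle}$ satisfying the stated stability equation. As explained in \S\ref{S:Background}, $D_{cris,K}^{*}(V_2(E))$ represents $D_{pcris}^{*}(V_2(E))$, so the claimed isomorphism follows.

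Finally, for uniqueness we argue as follows. The two modules $D_{pc}(24,0,i,\alpha,+)$ and $D_{pc}(24,0,i,\beta,-)$ are not isomorphic, by the last paragraph of the proof of Proposition~\ref{P: classification GL(2,3)}. Within a fixed sign, Lemma~\ref{L: unfiltered automorphism} shows that every unfiltered automorphism is scalar, so distinct $\alpha$ give nonisomorphic filtered modules. The only point needing care is the hypothesis of Lemma~\ref{L: unfiltered automorphism}. This is that $M(x)$ is nonscalar and $\cl L(\Id,M(x))\cap\cl L(\Id,M(\varphi))=\cl L(\Id)$. Both hold because the diagonal entries $\phi_1,\phi_2$ of $M(x)$ are distinct, whereas every element of $\cl L(\Id,M(\varphi))$ has equal diagonal entries. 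I expect no real obstacle here: all the substantive work is already contained in Proposition~\ref{P: classification GL(2,3)}.
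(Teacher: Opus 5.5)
Your proposal is correct and follows the paper's proof, which simply combines Lemma~\ref{L: curves satisfy A} with Proposition~\ref{P: classification GL(2,3)}. Your explicit check of the hypothesis of Lemma~\ref{L: unfiltered automorphism} is a detail the paper leaves implicit, and it is valid: the diagonal entries $\phi_1\neq\phi_2$ of $M(x)$ differ, whereas every element of $\cl L(\Id,M(\varphi))$ has equal diagonal entries.
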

\begin{proof}
    This follows directly from Lemma~\ref{L: curves satisfy A} and Proposition~\ref{P: classification GL(2,3)}.  
\end{proof}

\begin{lemma}\label{L: Twist for e=24}
    $D_{pc}(24,0,i,\alpha,+)\otimes D_{\eta}=D_{pc}(24,0,i,-\alpha,-)$
\end{lemma}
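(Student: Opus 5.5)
The plan mirrors the proof of Lemma~\ref{L: Twist for e=8}. First I write out $D:=D_{pc}(24,0,i,\alpha,+)\otimes D_{\eta}$ explicitly. The module $D_{\eta}$, base changed to $K$, is $\Q_4e_{\eta}$ with $M(\varphi)=-\Id$, $M(x)=M(y)=M(t)=\Id$ (with $t$ acting semilinearly) and trivial $\Fil^1$. Hence, in the basis $e_1\otimes e_{\eta},e_2\otimes e_{\eta}$, the tensor product has
$$M(\varphi)=-M^{+}(\varphi),\quad M(x)=M^{+}(x),\quad M(y)=M^{+}(y),\quad M(t)=\Id,$$
and $\Fil^1D_K=(\alpha\otimes e_1+1\otimes e_2)K$, where $M^{\pm}$ denote the matrices of $D_{pc}(24,0,i,\alpha,\pm)$.

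Next I look for a change of basis $B$ with $M(t)=\Id$ preserved, so $B\in\GL_2(\Q_2)$, satisfying
$$M(\varphi)B=BM^{-}(\varphi),\quad M^{+}(x)B=BM^{-}(x),\quad M^{+}(y)B=BM^{-}(y).$$
The natural candidate is $B=\tbt{1}{0}{0}{-1}$. Conjugating by it negates the off-diagonal entries. This sends $\tbt{0}{-2}{1}{0}$ to its negative, which accounts for the sign of $\varphi$. It also sends $M^{+}(x)=\tbt{\phi_1}{-2\zeta_6^{-1}}{\zeta_6}{\phi_2}$ exactly to $M^{-}(x)$, and likewise for $y$. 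So I would verify these three matrix identities directly; they are immediate from the definitions. Because $B$ has entries in $\Q_2$, it automatically commutes with the semilinear $t$.

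Finally I track the filtration. The new coordinates are $B^{-1}$ applied to $(\alpha,1)$, which gives $(\alpha,-1)$, proportional to $(-\alpha,1)$. So the line becomes $(-\alpha\otimes e_1+1\otimes e_2)K$. It remains to check that $\beta=-\alpha$ satisfies the defining condition for the $-$ module, namely
$$x(\beta)=\frac{\phi_2\beta-2\zeta_6^{-1}}{\zeta_6\beta+\phi_1}.$$
Substituting $x(-\alpha)=-x(\alpha)=\frac{\phi_2\alpha+2\zeta_6^{-1}}{\zeta_6\alpha-\phi_1}$ and simplifying shows this holds identically. Also $-\alpha\in K^{\langle x^3,t\rangle}$ is clear.

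The main obstacle is bookkeeping rather than mathematics. One must confirm that the chosen sign conventions for $M^{-}(x)$ and $M^{-}(y)$ in the definition really match the conjugate by $B$, and that no extra sign enters from how $t$ acts on $\zeta_6$ and $\phi_i$ in the tensor product. Since $B$ is rational and $D_{\eta}$ has $M(t)=\Id$, I expect no such issue.
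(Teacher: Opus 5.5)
Your proposal is correct and matches the paper's proof: the paper also reduces to the argument of Lemma~\ref{L: Twist for e=8} with the change of basis $B=\tbt{1}{0}{0}{-1}$, whose conjugation negates the off-diagonal entries and sends the filtration parameter $\alpha$ to $-\alpha$. Your explicit check that $-\alpha$ satisfies the defining condition of the $-$ module is the one detail the paper leaves implicit, and it is correct.
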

\begin{proof}
    The proof is analogous to that of Lemma~\ref{L: Twist for e=8}, with $B=\tbt{1}{0}{0}{-1}$, hence the difference in the behavior of the filtration.
\end{proof}
\section{Construction of curves}\label{S: explicit realization}

The combination of the results in~\ref{P: e=1ordinary}, \ref{P: e=1supersingular}, \ref{P: e=2}, \ref{Cor: classification for D3}, \ref{Cor: classification for D4}, \ref{Cor: classification for C4}, \ref{Cor: classification D6}, \ref{Cor: classification for SD16} and \ref{Cor: classification for GL(2,3)} shows that any elliptic curve $E/\Q_2$ with potentially good reduction $D_{pcris}^{*}(V_2(E))$ must be among the $\fpgmt$s in Table~\ref{Ta: Main} and so it proves one implication of Theorem~\ref{T: Main Theorem}. This section is devoted to the proof of the converse, that is, for any $D$ in Table~\ref{Ta: Main} there exists an elliptic curve $E/\Q_2$ such that $D_{pcris}^{*}(V_2(E))\simeq D$. To do so we will follow the approach of \cite{Volkov2005} and \cite{bosco20233adicrepresentationsarisingelliptic} using Galois pairs and Volkov descend criterion~\cite[Theorem 1.2]{Volkov2005}.

\subsection{Dieudonn\'e modules and Galois pairs}
In this subsection we briefly recall the Dieudonn\'e theory needed to describe Galois pairs. See~\cite{Volkov2005} for a detailed exposition.

 Let $k$ be a perfect field of characteristic $p$, $W(k)$ the ring of Witt vectors, $\sigma$ its Frobenius, and $K_0=\Frac(W(k))$. A Dieudonn\'e module $D$ is a finite free $W(k)$-module equipped with a $\sigma$-semilinear map $\varphi$ and a $\sigma^{-1}$-semilinear map $V$ such that $\varphi V=V\varphi=p$. A morphism of Dieudonn\'e modules is a $W(k)$-linear map commuting with the corresponding $\varphi$ and $V$. There is an antiequivalence of categories $\ma D$ from the category of $p$-divisible groups $\cl G$ over $k$ to the category of Dieudonn\'e modules. Moreover, after inverting $p$, the functor $D(\cl G):=\ma D(\cl G)\otimes_{W(k)}K_0$ defines an antiequivalence of categories between $p$-divisible groups up to quasi-isogeny and rational Dieudonn\'e modules, that is, $K_0$-vector spaces $D$ with a bijective $\sigma$-semilinear map $\varphi$ such that $D$ admits a finite free lattice $M$ satisfying $pM\subset \varphi(M)\subset M$. Thus, for $k=\F_{p^s}$, if we equip $D(\cl G)$ with an appropriate $\Gal(K/\Q_p)$-action, it becomes an unfiltered $(\varphi,\Gal(K/\Q_p))$-module.

 We use the following properties:
 \begin{itemize}
    
    \item The functor $\ma D$ is natural with respect to base change, that is, there exist a canonical isomorphism $\ma D(\cl G\otimes_{k}k')\simeq D(\cl G)\otimes_{W(k)}\otimes W(k')$. In particular, the functor preserves twists, for $g\in\Gal(K_0/\Q_p)$ and $\overline{g}$ its class in $\Gal(k/\F_p)$ there is a canonical isomorphism $\iota_g:\ma D(\cl G)^g\xrightarrow{\sim}\ma D(\cl G^{\bar{g}})$, see \cite[\S 4.3]{Berthelot1980}.
    \item $\operatorname{rank}_{W(k)}\ma D(H)=\operatorname{height}(H)$; see \cite[\S 4.3.1]{Berthelot1980}.
    \item The crystalline Frobenius $\varphi:D(\cl G)\rightarrow D(\cl G)$ can be defined as $D(F/k)\circ \iota_{\sigma}\circ m_{\sigma}$, where $F:\cl G\rightarrow \cl G^{\sigma}$ is the $p$-th power relative Frobenius and $m_{\sigma}$ is as defined in the notation subsection; see \cite[\S 4.3.2]{Berthelot1980}.  
 \end{itemize}

From now on, given an elliptic curve $E/\F_{p^s}$, we use $D(E)$ to denote the rational Dieudonn\'e module of its $p$-divisible group, namely $\ma D(E[p^{\infty}])[1/p]$. The above properties imply that $D(E)$ is a $2$-dimensional $K_0$-vector space with a $\sigma$-semilinear map $\varphi$. It is possible to equip $D(E)$ with the structure of an unfiltered $(\varphi,\Gal(K/\Q_p))$-module. This is precisely the purpose of the Galois pairs defined by Volkov in~\cite{Volkov2005}, which we now introduce.

Let $K/\Q_p$ be a finite extension with residue field $\F_{p^s}$ and let $\sigma\in \Gal(\F_{p^s}/\F_p)$ be the Frobenius. Let $E/\F_{p^s}$ be an elliptic curve. The group $\operatorname{Ist}_{\F_{p^s}}(E)$ consists of isomorphisms from $E$ to its $\Gal(\F_{p^s}/\F_p)$-twists. That is, $\operatorname{Ist}_{\F_{p^s}}(E)=\{\psi_{g}:=(\psi,g)\st \psi:E\simeq E^{g}\}$ and the composition law is defined by $\psi_g*\psi_h=(\psi_g)^h\circ\psi_h$. The map $\operatorname{pr}(\psi_g)=g$ defines a group homomorphism $\operatorname{pr}:\operatorname{Ist}_{\F_{p^s}}(E)\rightarrow \Gal(\F_{p^s}/\F_p)$. If $E$ is the base change of a curve $E_0/\F_p$, that is, $E=E_0\times_{\F_p}\F_{p^s}$, then $E^{\sigma}=E_0\times_{\F_p,\sigma}\F_{p^s}$, so $f=Id\times\sigma$ gives rise to an element $f_{\sigma}=(f,\sigma)\in \operatorname{Ist}_{\F_{p^s}}(E)$ and the map $\sigma\mapsto f_{\sigma}$ is a section of $\operatorname{pr}$; thus $\operatorname{Ist}(E)\simeq Aut_{\F_{p}^s}(E)\rtimes \Gal(\F_{p^s}/\F_p)$. Moreover, in this situation, the relative $p$-th power Frobenius $F:E\rightarrow E^{\sigma}$ splits as $F=f_{\sigma}\circ (F_0\otimes k)$, where $F_0:E_{0}\rightarrow E_{0}$ is the absolute Frobenius of $E_0$.

\begin{defn}\label{Def: Galois pairs}
    Let $K/\Q_p$ be a finite Galois extension. A Galois pair $(E_0,\Gamma,\nu)$ for $K/\Q_p$ consists of an elliptic curve $E_0/\F_p$, a subgroup $\Gamma$ of $Aut_{\F_{p^s}}(E)$ with $E=E_0\times_{\F_p}\F_{p^s}$ and an antihomomorphism $\nu:\Gal(K/\Q_p)\rightarrow Aut_{\F_{p^s}}(E)\rtimes\Gal(\F_{p^s}/\F_p)$ such that:
 \begin{itemize}
        \item $\operatorname{pr}\circ \nu(g)= \bar{g}$, where $pr$ is the projection onto the second factor and $\bar{g}$ is the class of $g$ in $\Gal(\F_{p^s}/\F_p)$.

        \item $\Image(\nu)=\Gamma\rtimes\Gal(\F_{p^s}/\F_p)$.
    \end{itemize}
 An isogeny between the Galois pairs $(E_0,\Gamma,\nu)$ and $(E'_0,\Gamma',\nu')$ is an isogeny $f:E_0\longrightarrow E'_0$ defined over $\F_p$ such that $\nu'(g)\circ f=f^{\bar{g}}\circ \nu(g)$ for all $g\in\Gal(K/\Q_p)$.

 A Galois pair is called minimal if $\nu$ is injective and, for the given $E_0$, $\F_{p^s}$ is the smallest field for which $\Gamma<Aut_{\F_{p^s}}(E)$.
\end{defn}

A Galois pair $(E_0,\Gamma,\nu)$ produces an unfiltered $(\varphi,\Gal(K/\Q_p))$-module as follows. Each $(\psi,g):E\rightarrow E^{\bar{g}}$ induces, via the Dieudonn\'e functor, a $K_0$-linear map $D(\psi_g):D(E^{\overline{g}})\rightarrow D(E)$. Since the action of $K_0$ on $D(E)^{g}$ is twisted by $g$, this $K_0$-linear map induces a $g$-semilinear map $c_{\psi}=D(\psi)\circ\iota_g\circ m_g$ from $D(E)$. Composing the antihomomorphism $\nu:\Gal(K/\Q_p)\rightarrow \operatorname{Ist}_{\F_{p^s}}(E)$ with the antiequivalence of categories given by the rational Dieudonn\'e functor gives a map $g\rightarrow c_{\nu(g)}$ that defines the semilinear action of $\Gal(K/\Q_p)$ on $D(E)$. Moreover, base change gives $D(E)\simeq D(E_0)\otimes_{\Q_p}K_0$. For an isomorphism $\psi:E\longrightarrow E^{\bar g}$, naturality of relative Frobenius gives $F_{E^{\bar g}/k}\circ\psi=\psi^\sigma\circ F_{E/k},$ where the twisted targets are canonically identified because $\bar g$ and $\sigma$ commute. Applying Dieudonn\'e theory and its base-change isomorphisms to this identity gives
$c_\psi\varphi=\varphi c_\psi$. Hence the resulting semilinear Galois action commutes with $\varphi$, making $D(E)$ an unfiltered $(\varphi,\Gal(K/\Q_p))$-module. Under this construction, isogenous Galois pairs lead to isomorphic unfiltered $(\varphi,\Gal(K/\Q_p))$-modules.

\subsection{Construction of Galois pairs}

Up to $\F_2$-isomorphism, there are $5$ elliptic curves $E/\F_2$, one for each of the
possible traces of Frobenius. We list them here together with some properties that will be useful later.

\begin{table}[h]
    \centering
    \begin{tabular}{lccc}
        \hline
        Curve equation
                               & Trace of Frob.
                               & $\operatorname{Aut}_{\mathbb{F}_2}(E)$
                               & $\operatorname{Aut}_{\mathbb{F}_4}(E)$                              \\
        \hline
        $E_1:\ y^2+y=x^3+x$    & $-2$                                   & $C_4$ & $C_4$              \\
        $E_2:\ y^2+xy=x^3+1$   & $-1$                                   & $C_2$ & $C_2$              \\
        $E_3:\ y^2+y=x^3$      & $0$                                    & $C_2$ & $\mathrm{SL}(2,3)$ \\
        $E_4:\ y^2+xy+y=x^3+1$ & $1$                                    & $C_2$ & $C_2$              \\
        $E_5:\ y^2+y=x^3+x+1$  & $2$                                    & $C_4$ & $C_4$              \\
        \hline
    \end{tabular}
    \caption{Relevant data for elliptic curves $E/\F_2$.}\label{Table: Ell curves over F2}
\end{table}

\begin{lemma}\label{L: Automorphism Group E3}
    Let $\omega\in \F_4$ be a root of $X^2+X+1$, let $E_3: y^2+y=x^3$, and let $$\psi_1(x,y)=(\omega x+\omega,y+x+\omega),\quad \psi_2(x,y)=(\omega^2x+\omega^2,y+x+\omega^2),\quad z(x,y)=(x,y+1)$$ be automorphisms of $E_3$. Then $\Aut_{\F_4}(E_3)=\langle \psi_1,\psi_2~\vert~ \psi_1^3=\psi_2^3=(\psi_1\psi_2)^2 \rangle$ and $\Aut_{\F_4}(E_3)\rtimes\Gal(\F_4/\F_2)=\langle \psi_1,\psi_2 \rangle\rtimes \langle f\rangle=\langle \psi_1,\psi_2,f~\vert~ \psi_1^3=\psi_2^3=(\psi_1\psi_2)^2,~f^2=1,~ f\psi_1 f^{-1}=\psi_2 \rangle\simeq \GL_2(\F_3)$.
\end{lemma}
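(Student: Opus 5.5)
The proof has four steps: check that the three maps are automorphisms, compute $\Aut_{\F_4}(E_3)$ from the standard $j=0$ formulas, verify the relations, and identify the extension by Frobenius with $\GL_2(\F_3)$.

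First I would check directly that $\psi_1,\psi_2,z$ preserve $E_3$. For $\psi_1$, using characteristic $2$ and $\omega^2+\omega=1$, the left-hand side becomes
$$(y+x+\omega)^2+(y+x+\omega)=y^2+y+x^2+x+1.$$
The right-hand side becomes
$$(\omega x+\omega)^3=\omega^3(x+1)^3=x^3+x^2+x+1.$$
These agree on $E_3$. The check for $\psi_2$ is the same computation with $\omega$ replaced by $\omega^2$, and $z$ is immediate. To compute the whole group, I would use the description of automorphisms of a $j=0$ curve in characteristic $2$ in Weierstrass form $y^2+y=x^3$. These are exactly the maps
$$(x,y)\mapsto (u^2x+s^2,\; y+u^2sx+t),\qquad u^3=1,\quad s^4+s=0,\quad t^2+t=s^6$$
(Silverman, Appendix~A). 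Over $\F_4$ every $u\in\F_4^{\times}$ and every $s\in\F_4$ is allowed. For each such $s$ the equation $t^2+t=s^6\in\{0,1\}$ has two roots in $\F_4$. Hence $\#\Aut_{\F_4}(E_3)=3\cdot4\cdot2=24$. In this notation $\psi_1$ is $(u,s,t)=(\omega^2,\omega^2,\omega)$, $\psi_2$ is its Frobenius conjugate, and $z=(1,0,1)$ is $[-1]$.

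Next I would verify the relations by composing the explicit formulas: $\psi_1^3=\psi_2^3=(\psi_1\psi_2)^2=z$. I would also check that $z$ is central of order $2$, that $\psi_1$ has order $6$, and that $\psi_1\psi_2$ has order $4$. The group $\langle a,b\mid a^3=b^3=(ab)^2\rangle$ is the binary tetrahedral group $\SL_2(\F_3)$ of order $24$, so $\langle\psi_1,\psi_2\rangle$ is a quotient of it. This subgroup contains elements of order $3$ and $4$ and the central involution $z$. Its image in $\langle\psi_1,\psi_2\rangle/\langle z\rangle$ is generated by two elements of order $3$ whose product has order $2$, and such a group is $A_4$. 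So $\langle\psi_1,\psi_2\rangle$ has order $24$, equals $\Aut_{\F_4}(E_3)$, and the presentation is faithful. If this quotient argument is awkward, I would instead enumerate the $24$ triples $(u,s,t)$ and express each as a word in $\psi_1,\psi_2$.

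For the semidirect product, $E_3$ is defined over $\F_2$, so by the discussion preceding Definition~\ref{Def: Galois pairs} we have $\operatorname{Ist}_{\F_4}(E_3)\simeq\Aut_{\F_4}(E_3)\rtimes\langle f\rangle$ with $f^2=1$. Conjugation by $f$ acts on an automorphism by applying $\sigma$ to its coefficients. Since $\sigma(\omega)=\omega^2$, this gives $f\psi_1f^{-1}=\psi_1^{\sigma}=\psi_2$, which yields the stated presentation.

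The step I expect to need the most care is the final identification with $\GL_2(\F_3)$. I would exhibit explicit matrices: $A=\tbt{1}{1}{0}{1}\cdot(-1)$ of order $6$, $B=\tbt{-1}{0}{1}{-1}$, and $F=\tbt{0}{1}{1}{0}$, which has determinant $-1$. These satisfy $A^3=B^3=(AB)^2=-\Id$ and $FAF^{-1}=B$. Sending $\psi_1\mapsto A$, $\psi_2\mapsto B$, $f\mapsto F$ then gives a surjection from the presented group onto $\SL_2(\F_3)\rtimes\langle F\rangle=\GL_2(\F_3)$. Since both groups have order $48$, this surjection is an isomorphism. I would confirm these matrix identities with a short computation.
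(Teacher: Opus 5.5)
Everything up to the last step checks out, and it gives a by-hand replacement for the paper's proof, which simply defers to a Magma computation. Specifically:
\begin{itemize}
\item the parametrization $(u,s,t)$ with $u^3=1$, $s^4=s$, $t^2+t=s^6$ is right, and so is the count $3\cdot4\cdot2=24$;
\item $\psi_1=(\omega^2,\omega^2,\omega)$ and $z=[-1]$ are right;
\item composing the formulas gives $\psi_1^2(x,y)=(\omega^2x+1,\,y+\omega^2x+\omega)$, $\psi_1^3=z$, $\psi_1\psi_2(x,y)=(x+\omega^2,\,y+\omega x+\omega)$ and $(\psi_1\psi_2)^2=z$;
\item the $A_4$-quotient argument and the relation $f\psi_1f^{-1}=\psi_1^{\sigma}=\psi_2$ are also fine.
\end{itemize}

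The gap is in the final identification with $\GL_2(\F_3)$: the matrices you wrote down do not satisfy the identities you claim. With $A=\tbt{-1}{-1}{0}{-1}$ and $B=\tbt{-1}{0}{1}{-1}$ one gets $AB=\tbt{0}{1}{-1}{1}$. This has trace $1$ and determinant $1$, so it has order $6$ and $(AB)^2=AB-\Id\neq-\Id$. Moreover $FAF^{-1}=\tbt{-1}{0}{-1}{-1}\neq B$. So the assignment $\psi_1\mapsto A$, $\psi_2\mapsto B$, $f\mapsto F$ is not a homomorphism, and changing $F$ cannot rescue it, because $(AB)^2=-\Id$ already fails.

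The repair is to define $B:=FAF^{-1}=\tbt{-1}{0}{-1}{-1}$.
\begin{itemize}
\item $B^3=FA^3F^{-1}=-\Id$.
\item $AB=\tbt{2}{1}{1}{1}$ has trace $0$ and determinant $1$, so $(AB)^2=-\Id$ by Cayley--Hamilton.
\item $FAF^{-1}=B$ holds by construction.
\item $A^4=-A=\tbt{1}{1}{0}{1}$ and $B^4=-B=\tbt{1}{0}{1}{1}$ generate $\SL_2(\F_3)$, and $\det F=-1$, so the images generate $\GL_2(\F_3)$.
\end{itemize}
Your surjection-plus-order-count argument ($48=48$) then completes the proof.
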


\begin{proof}
    This follows by direct computation; see~\cite{MAGMAFiles}.

\end{proof}

\begin{remark}\label{R: reduce to 3,4,8,24}
    There is no need to construct elliptic curves separately for the $\fpgmt$s with $e=2$ and $e=6$, since curves with these representations are obtained by twisting curves in the cases $e=1$ and $e=3$. Curves with $e=1$ are covered by Theorem~\ref{T: realization e=1} hence, we only need to show the existence of curves for the cases $e\in\{3,4,8,24\}$. Moreover, by Lemmas~\ref{L: Twist for e=8} and~\ref{L: Twist for e=24}, in the cases $e=8$ and $e=24$ it suffices to construct curves $E$ for the $+$ or $-$ cases, since the others follow by twisting.
  
\end{remark}

\begin{lemma}\label{L: Galois pairs exist}
    For $D\in\{D_{pc}(3,0,\alpha),D_{pc}(4,0,i,\alpha),D_{pc}(4,a_2,i,\varepsilon)\}$, there exists a Galois pair $(E_0,\Gamma,\nu)$ for which the unfiltered $(\varphi,\Gal(K/\Q_2))$-module $D(E)$ is isomorphic to the underlying unfiltered module of $D$. Moreover, there exists a Galois pair with $D(E)$ isomorphic to either $D_{pc}(8,0,i,\alpha,+)$ or $D_{pc}(8,0,i,\alpha,-)$ and a Galois pair with $D(E)$ isomorphic to either $D_{pc}(24,0,i,\alpha,+)$ or to $D_{pc}(24,0,i,\alpha,-)$.
\end{lemma}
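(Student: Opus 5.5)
The plan is to construct, for each case, an explicit Galois pair and then use the classification propositions to identify the resulting unfiltered module. For each $K$ I choose $E_0/\F_2$ from Table~\ref{Table: Ell curves over F2} with the right trace: $E_3$ (trace $0$) for $e\in\{3,4\ (\text{dihedral}),8,24\}$, and $E_1$ or $E_5$ (traces $\mp2$, automorphism group $C_4$ over $\F_2$) for the cyclic case $D_{pc}(4,\pm2,i,\varepsilon)$. I then define an antihomomorphism $\nu:\Gal(K/\Q_2)\to \Aut_{\F_{2^s}}(E)\rtimes\Gal(\F_{2^s}/\F_2)$ on the generators $x,t$ (or $x,y,t$) fixed in the corresponding sections.

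\textbf{Defining $\nu$.} In each case I send $t\mapsto f_\sigma$, the canonical twist element, so that $\operatorname{pr}\circ\nu$ is the restriction to $K_0$. I send inertia injectively into $\Aut$ according to the group:
\begin{itemize}
\item For $e=24$, Lemma~\ref{L: Automorphism Group E3} gives the presentation directly, and I set $x\mapsto\psi_1$, $y\mapsto\psi_2$ (or their inverses, to respect the antihomomorphism convention). The relations $x^3=y^3=(xy)^2$ and $txt=y$ then match $f\psi_1f^{-1}=\psi_2$.
\item For $e=8$, I map $Q_8=\langle x,y\rangle$ onto the quaternion subgroup of $\SL_2(\F_3)\simeq\Aut_{\F_4}(E_3)$, which is normalized by $f$.
\item For $e=3$, I map $x$ to an element of order $3$ inverted by $f$.
\item For the dihedral $e=4$ case, I map $x$ to an element of order $4$ whose $f$-conjugate is its inverse.
\item For the cyclic case, $s=1$ and I map $x$ to a generator of $\Aut_{\F_2}(E_{1})\simeq C_4$ or $\Aut_{\F_2}(E_{5})\simeq C_4$.
\end{itemize}
Checking the defining relations is a finite computation in $\GL_2(\F_3)$ or $C_4$, which I would verify by direct Magma computation as in Lemma~\ref{L: Automorphism Group E3}.

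\textbf{Checking condition ${\bf(A)}$.} Next I show that $D(E)$ with the induced semilinear action satisfies ${\bf(A)}$ for $K/L$:
\begin{itemize}
\item $\dim_{K_0}D(E)=2$, since the height of $E[2^\infty]$ is $2$.
\item Inertia acts faithfully, because $\nu$ is injective and the Dieudonn\'e functor is faithful on automorphisms. For the latter, an automorphism acting trivially on $D(E)$ would act trivially on $T_2$ up to isogeny, hence be the identity.
\item $\det g=1$ for $g\in I_{K/\Q_2}$: $g$ acts through an automorphism of finite order whose degree is $1$, so its determinant on $H^1_{cris}$ is $\deg=1$.
\item $D_0=D(E)^{\langle t\rangle}=D(E_0)$, because $t\mapsto f_\sigma$. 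Then $\varphi_0$ is the Frobenius of $E_0/\F_2$, whose characteristic polynomial is $X^2-a_2X+2$ by Katz--Messing.
\end{itemize}
Once ${\bf(A)}$ holds, Propositions~\ref{P: classification D3}, \ref{P: classification D4}, \ref{P: classification C4}, \ref{P: classification SD16} and~\ref{P: classification GL(2,3)} immediately identify the unfiltered module with the underlying module of the corresponding $D$. For $e=8,24$ they give one of the two signs, which is exactly the weaker statement claimed.

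\textbf{Main obstacle.} The main obstacle is the cyclic case, where the sign $\varepsilon$ is not free. I would obtain both signs by composing $\nu(x)$ with the automorphism $[-1]$, which is central in $C_4$ and preserves the relations. Since $[-1]$ acts as $-\Id$ on $D(E)$, this replaces $M(x)$ by $-M(x)$ and swaps $M^{+}$ and $M^{-}$. A similar subtlety is matching the specific field $L_{4,i}$ or $L_{8,i}$: the field only enters through the abstract group $\Gal(K/\Q_2)$ with its fixed generators, so the same $\nu$ works for every $i$.
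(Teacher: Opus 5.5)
Your proposal is correct and is essentially the paper's proof. For $a_2=0$ you take $E_3$ over $\F_4$ with $\nu(t)=f$; your option $\psi_i^{-1}$ for $e=24$ is exactly the paper's $z\psi_i^2$. In the cyclic case you take $E_1$ or $E_5$ with $\nu(x)=u$ or $[-1]\circ u=u^{-1}$, which matches the paper's $u^{\pm1}$. You then check \textbf{(A)} and invoke the classification propositions. Your argument $\det D(\nu(g))=\deg\nu(g)=1$ is a harmless substitute for the paper's use of $\det D(\psi_i)\in\Q_2$ together with $D(\psi_i)^3=-\Id$.

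Two small fixes are needed:
\begin{itemize}
\item For $e=8$, being normalized by $f$ is not enough on its own. You must set $\nu(y)=f\nu(x)f$ with $\nu(x)$ an order-$4$ element not inverted by $f$, as in the paper's choice $\nu(x)=z\psi_1\psi_2$, $\nu(y)=z\psi_2\psi_1$.
\item For faithfulness, do not appeal to $T_2$: it is zero for a supersingular curve in characteristic $2$. Argue instead through the Dieudonn\'e antiequivalence on $E[2^\infty]$ and the injectivity of $\operatorname{End}(E)\to\operatorname{End}(E[2^\infty])$.
\end{itemize}
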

\begin{proof}
    Note that, by Lemma~\ref{L: pot supersingular} $e\geq 3$ implies that $a_2\equiv 0\pmod 2$, so we only need to consider the supersingular cases.

    We first construct the Galois pairs for the $\fpgmt$s with $a_2=0$. For all of them, let $E_0:=E_3$, $E=E_0\otimes_{\F_2}\F_4$ and define $\Gamma$
    and $\nu$ as specified in the following table.

    \begin{table}[h!]
    \centering
    \begin{tabular}{c|c|c|c|c|c}
        $e$ & $\operatorname{Gal}(K/\mathbb{Q}_2)$
            & $\Gamma$ & $\nu(x)$ & $\nu(y)$ & $\nu(t)$ \\
        \hline
        $3$  & $S_3$ & $C_3$
             & $\psi_1\psi_2^2$
             & --- & $f$ \\
        $4$  & $D_4$ & $C_4$
             & $\psi_2\psi_1^2\psi_2$
             & --- & $f$ \\
        $8$  & $\mathrm{SD}_{16}$ & $Q_8$
             & $z\psi_1\psi_2$
             & $z\psi_2\psi_1$ & $f$ \\
        $24$ & $\mathrm{GL}_2(\mathbb{F}_3)$
             & $\mathrm{SL}_2(\mathbb{F}_3)$
             & $z\psi_1^2$
             & $z\psi_2^2$ & $f$
    \end{tabular}
    \caption{Galois pairs for $a_2=0$.}
    \label{Ta:GaloisPairs}
\end{table}
    All these antihomomorphisms are injective, see~\cite{MAGMAFiles}. The minimality of $\F_4$ for these $\Gamma$ follows from $\Aut_{\F_2}(E_3)\simeq C_2$; thus all of them are minimal Galois pairs. We need to show that the unfiltered $(\varphi,\Gal(K/\Q_2))$-module that they produce is the desired one. Since $E=E_0\otimes_{\F_2}\F_4$, compatibility with base change implies that $D(E)=D(E_0)\otimes_{\Q_2}K_0$, $D(E)^{\Gal(K/L)}=D(E_0)$ and $\varphi=\varphi_0\otimes\sigma$. By definition, $\varphi_0:D(E_0)\xrightarrow{}D(E_0)$ is $\varphi_0=D(F_0)$, where $F_0:E_0\rightarrow E_0$ is the Frobenius endomorphism. Since $F_0^2-a_2F_0+[2]=0$, applying the Dieudonn\'e functor to this expression gives $\varphi_0^2-a_2\varphi_0+2=0$. Since $\nu$ is injective $I_{K/\Q_2}$ acts faithfully on $D(E)$. We claim that $\det D(\nu(g))=1$ for all $g\in I_{K/\Q_2}$, thus $D(E)$ will satisfy property ${\bf (A)}$ and it will follow from Propositions~\ref{P: classification D3}, \ref{P: classification D4}, \ref{P: classification SD16}, and \ref{P: classification GL(2,3)} that the unfiltered $(\varphi,\Gal(K/\Q_2))$-modules produced by these Galois pairs are isomorphic to the underlying unfiltered modules of $D_{pc}(3,0,\alpha)$, $D_{pc}(4,0,i,\alpha)$, $D_{pc}(8,0,i,\alpha,\pm)$, and $D_{pc}(24,0,i,\alpha,\pm)$. In the cases $e=8$ and $e=24$, we do not know which sign our Galois pair realizes. This does not affect the argument, since the existence of curves for one sign implies, by twisting, the existence of curves for the other.
    
    To prove the claim, we will show that $\det D(\psi_1)=1$ and $\det D(\psi_2)=1$. Since, for all $\nu$, we have $\nu(I_{K/\Q_2})\subset \langle\psi_1,\psi_2 \rangle$, the claim will follow. Since $D(\psi_i)\varphi=\varphi D(\psi_i)$, taking determinants and using the fact that $\varphi$ is $\sigma$-semilinear shows that $\det D(\psi_i)\in \Q_2$. Since $\psi_i^3=z=[-1]$ then $D(\psi_i)^3=-Id$, taking determinants gives $\det D(\psi_i)^3=1$. Since we have shown that $\det D(\psi_i)\in \Q_2$, it follows that $\det D(\psi_i)=1$.

    It remains to construct a Galois pair for $D_{pc}(4,\pm 2,i,\pm)$. For $a_2=-2$, we set $E_0=E_1$. The automorphism $u(x,y)=(x+1,y+x)$ generates $\Aut_{\F_2}(E)=C_4$ and so it satisfies $u^2=[-1]$. We then choose $\Gamma=C_4$ and $\nu(x)=u^{\pm 1}$. Since $K$ is totally ramified, it follows directly that $\varphi$ satisfies $X^2-a_2X+2$. From $u^2=[-1]$ we deduce $D(u)^2=-\Id$ thus the characteristic polynomial of $D(u)$ is $X^2+1$ so $\det D(u)=1$. Here it is clear that $\nu$ is injective and $\F_2$ is minimal. 

 Thus, by Proposition~\ref{P: classification C4}, these Galois pairs must be either $D_{pc}(4,-2,i,+)$ or $D_{pc}(4,-2,i,-)$ with one corresponding to $\nu(x)=u$ and the other to $\nu(x)=u^{-1}$. For $a_2=2$, we can use the same argument with $E_0=E_5$ together with the same $\Gamma$, $u$ and $\nu$ as for $a_2=-2$.
 
\end{proof}

\subsection{Proof of the converse}

We first introduce the following construction due to Volkov, see \cite[4.3]{Volkov2001} for a detailed exposition of it.
    
Let $K/\Q_2$ be a finite Galois extension, $\cl G/\cl O_K$ be a $2$-divisible group, $G=\cl G\otimes_{\cl O_K}k$ and $T_2(\cl G)$ its Tate module. Suppose that the natural action of $G_K$ on $T_2(\cl G)$ can be extended to a $G_{\Q_2}$-action $$\rho:G_{\Q_2}\rightarrow Aut_{\Z_2}(T_2(\cl G)).$$ This extended action induces a semilinear action of $\Gal(K/\Q_2)$ on $D(G)$ and on $D_{cris,K}^{*}(V_2(\cl G))$ as follows:

Let $g\in \Gal(K/\Q_2)$ and let $\tilde{g}\in G_{\Q_2}$ be a lift. The action of $\tilde{g}$ on the $\overline{\Q}_2$-points of $\cl G$ induces a linear map $\tilde{g}:T_2(\cl G)\rightarrow T_2(\cl G^{g})$. Then, the map $\tau_g:=\tilde{g}\circ \rho(\tilde{g})^{-1}$ is a $G_K$-equivariant map independent of the choice of lift $\tilde{g}$. By the full faithfulness of the Tate-module functor, there exists a unique isomorphism $f_{g}:\cl G\rightarrow \cl G^{g}$ such that $T_2(f_{g})=\tau_g$. These maps satisfy $f_{gh}=f_{h}^g\circ f_g$.

Applying $D_{cris,K}^{*}(V_2(-))$ to this map gives $D_{cris,K}^{*}(V_2(f_g)):D_{cris,K}^{*}(V_2(\cl G^g))\rightarrow D_{cris,K}^{*}(V_2(\cl G))$. Composing this map with the isomorphism $\iota^{cris}_g:D_{cris,K}^{*}(V_2(\cl G))^g\rightarrow D_{cris,K}^{*}(V_2(\cl G^g))$ and with the semilinear map $m^{cris}_g:D_{cris,K}^{*}(V_2(\cl G))\rightarrow D_{cris,K}^{*}(V_2(\cl G))^g$ gives the semilinear map $$b_g:D_{cris,K}^{*}(V_2(\cl G))\xrightarrow{m_g^{cris}}D_{cris,K}^{*}(V_2(\cl G))^g\xrightarrow{\iota_g^{cris}}D_{cris,K}^{*}(V_2(\cl G^g))\xrightarrow{D_{cris,K}^{*}(V_2(f_g))}D_{cris,K}^{*}(V_2(\cl G)).$$ The map $g\mapsto b_g$ gives the desired semilinear action of $\Gal(K/\Q_2)$ on $D_{cris,K}^{*}(V_2(\cl G))$. Moreover, this semilinear action coincides with the action on $D_{cris,K}^{*}(V_2(\cl G))$ viewed as the $\fpgmt$ corresponding to the potentially crystalline extended representation $\rho:G_{\Q_2}\rightarrow Aut_{\Q_2}V_2(\cl G)$.

For $D(G)$, the reduction of $f_{g}$ gives a map $\overline{f}_g:G\rightarrow G^{\bar{g}}$, where $\bar{g}$ is the class of $g$ in $\Gal(k/\F_2)$. Then the rational contravariant Dieudonn\'e functor gives a $K_0$-linear map $D(\bar{f}_g):D(G^{\bar{g}})\rightarrow D(G)$. Now we can compose $D(\bar{f}_g)$ with the canonical $K_0$-linear isomorphism $\iota^D_g:D(G)^{g}\rightarrow D(G^{\bar{g}})$ and the $g$-semilinear map $m^D_g:D(G)\rightarrow D(G)^g$ to obtain a $g$-semilinear map: $$c_g:D(G)\xrightarrow{m^D_g}D(G)^g\xrightarrow{\iota^D_g}D(G^{\bar{g}})\xrightarrow{D(\bar{f}_g)}D(G)$$ The map $g\mapsto c_g$ gives the desired semilinear action of $\Gal(K/\Q_2)$ on $D(G)$.

Fontaine in~\cite[Theorem 6.2]{Fontaine1982} shows that, for a $p$-divisible group $\cl G$, if we equip $D(G)\otimes_{K_0}K$ with the Hodge filtration determined by the lift $\cl G/\cl O_K$, then there is a natural isomorphism $\gamma_{\cl G}:D(G)\rightarrow D_{cris,K}^{*}(V_2(\cl G))$ of filtered $\varphi$-modules. This isomorphism is natural and compatible with base change. By naturality, $$D_{cris,K}^{*}(V_2(f_{g}))\circ \gamma_{\cl G^{g}}=\gamma_{\cl G}\circ D(\bar{f}_{g})$$ and by compatibility with base change, $$(\iota^{cris}_g\circ m^{cris}_g)\circ \gamma_{\cl G}=\gamma_{\cl G^{g}}\circ(\iota^D_g\circ m^D_g)$$ Combining these identities, we see that $b_g\gamma_{\cl G}=\gamma_{\cl G}c_g$, that is, with the actions we have just defined Fontaine comparison isomorphism is an isomorphism of $\fpgmt$. This fact will play a key role in the proof of Theorem~\ref{T: explicit realization}.

The last ingredient we need to construct our curves is the following descent criterion due to Volkov.

\begin{lemma}\label{L: Volkov criterion}
    Let $L/\Q_2$ be one of the fields in Table~\ref{Ta: InTypes} and $K$ its Galois closure. Let $E/L$ be an elliptic curve with good reduction and let $\tilde{E}$ be the reduction of $E\otimes_L K$. Then $E$ can be defined over $\Q_2$, i.e., $E=E_0\otimes_{\Q_2} L$ for some $E_0/\Q_2$, if and only if the $G_{K}$-action on $T_{2}(E)$ can be extended to a $G_{\Q_2}$-action such that:

    \begin{itemize}
        \item The action of $G_{L}$ on $T_2(E)$ coincides with the usual one.
        \item The action of $G_{K_0}$ on $T_2(E)$ induces, via the $c_g$ defined above, an action of $I_{K/\Q_2}$ on $D(\tilde{E})$ that comes from $k$-automorphisms of $\tilde{E}$.
    \end{itemize}
    In this case, the extended $G_{\Q_2}$-action on $T_2(E)$ is isomorphic to $T_2(E_0)$.
  
\end{lemma}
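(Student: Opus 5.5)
The statement is Volkov's descent criterion, adapted to $L/\Q_2$. I would prove it by Galois descent applied to the Néron model, or equivalently to the $2$-divisible group $\cl G=E[2^\infty]$ over $\cl O_K$, using Tate's full faithfulness and Serre--Tate.

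\emph{Necessity.} Suppose $E=E_0\otimes_{\Q_2}L$. Then the natural $G_{\Q_2}$-action on $T_2(E_0)=T_2(E)$ restricts to the usual action of $G_L$, which gives the first condition. For $g\in\Gal(K/\Q_2)$ and a lift $\tilde g$, the map $\tau_g=\tilde g\circ\rho(\tilde g)^{-1}$ is $T_2$ of the canonical descent isomorphism $E_K\simeq E_K^{g}$ coming from $E_0$. This isomorphism extends to the Néron models over $\cl O_K$. For $g\in I_{K/\Q_2}$ we have $\bar g=1$ on $k$, so its reduction is a $k$-automorphism of $\tilde E$, and $c_g=D(\bar f_g)$ comes from that automorphism. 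Finally, the extended action is the one on $T_2(E_0)$ by construction.

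\emph{Sufficiency.} Start from an extended action $\rho$. As in the construction preceding the lemma, Tate's theorem produces isomorphisms $f_g:\cl G\to\cl G^{g}$ over $\cl O_K$ satisfying the cocycle relation $f_{gh}=f_h^{g}\circ f_g$. The goal is to promote these to a descent datum on the elliptic curve $E_K$ itself. Serre--Tate says that deformations of $\tilde E$ are equivalent to deformations of its $2$-divisible group, and the analogous statement for homomorphisms, applied to the isomorphisms $\cl G\to\cl G^g$ over $\cl O_K$, gives the following: $f_g$ comes from an isomorphism of the abelian schemes $\cl E\to\cl E^{g}$ if and only if its reduction $\bar f_g:\tilde E[2^\infty]\to \tilde E^{\bar g}[2^\infty]$ comes from an isomorphism $\tilde E\to\tilde E^{\bar g}$.

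\begin{itemize}
\item For $g\in\Gal(K/L)$ this holds by the first hypothesis, since $E$ is already defined over $L$ and $f_g$ is the canonical isomorphism.
\item For $g\in I_{K/\Q_2}$ it holds by the second hypothesis. Because Dieudonné theory is fully faithful on $2$-divisible groups over the perfect field $k$, the condition on $c_g$ is equivalent to $\bar f_g$ coming from $\Aut_k(\tilde E)$.
\item Since $\Gal(K/\Q_2)=I_{K/\Q_2}\rtimes\langle t\rangle$ by Lemma~\ref{L: t exists}, and the cocycle relation is compatible with composition, every $g$ is covered.
\end{itemize}

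This gives a cocycle of isomorphisms $E_K\to E_K^{g}$. Galois descent for the quasi-projective scheme $E_K$ over $K/\Q_2$ then yields $E_0/\Q_2$, and the descent data over $L$ agrees with the given model, so $E_0\otimes L\simeq E$. The $G_{\Q_2}$-action on $T_2(E_0)$ is the one obtained by twisting back with $\tau_g$, namely $\rho$.

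The main obstacle I expect is the lifting step. It requires a careful check that Serre--Tate (Katz's form, over the complete base $\cl O_K$) applies to isomorphisms between \emph{twisted} schemes $\cl E\to\cl E^g$, not only to endomorphisms. It also requires verifying that the semilinear $c_g$ genuinely detects $\bar f_g$ through $\iota_g$ and $m_g$. I would handle the twisted case by working over $\cl O_K$ viewed as a $W(k)$-algebra with the twisted structure map, which turns $\cl E^g$ into an ordinary deformation of $\tilde E^{\bar g}$.
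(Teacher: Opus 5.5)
Your proposal is correct and follows essentially the same route as the argument the paper cites, \cite[Subsection 1.2.2]{Volkov2005}: you lift the Tate-module descent isomorphisms $f_g$ to isomorphisms of the abelian schemes over $\cl O_K$, using the $G_L$-hypothesis for $\Gal(K/L)$, the $k$-automorphism hypothesis for $I_{K/\Q_2}$, and Serre--Tate on each $\cl O_K/\p^n$ followed by Grothendieck existence, and then apply Weil's Galois descent. The twisting issue you flag as the main obstacle is harmless: $\cl E^g$ is simply another abelian scheme over $\cl O_K$ whose special fibre is canonically $\tilde E^{\bar g}$, so Serre--Tate applies to the pair $(\cl E,\cl E^g)$ directly.
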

\begin{proof}
    See \cite[Subsection 1.2.2]{Volkov2005}.
\end{proof}

The proof of the following theorem closely follows that of~\cite[Theorem~5.7]{Volkov2005}, with results requiring $p$ to be odd replaced by their counterparts for $p=2$.

\begin{theorem}\label{T: explicit realization}
    Let $D$ be one of the $\fpgmt$s in Table~\ref{Ta: Main}. Then there exists an elliptic curve $E/\Q_2$ such that $D_{cris,K}^{*}(V_2(E))\simeq D$.
 
\end{theorem}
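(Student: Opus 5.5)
By Remark~\ref{R: reduce to 3,4,8,24} it suffices to treat $e\in\{3,4,8,24\}$, and for $e=8,24$ only one sign $\varepsilon$. So fix $D$ among $D_{pc}(3,0,\alpha)$, $D_{pc}(4,0,i,\alpha)$, $D_{pc}(4,\pm2,i,\varepsilon)$, or the $e=8,24$ module whose sign is realized by Lemma~\ref{L: Galois pairs exist}. Let $L$ and $K$ be the corresponding fields. Lemma~\ref{L: Galois pairs exist} gives a Galois pair $(E_0,\Gamma,\nu)$ and an isomorphism of unfiltered $(\varphi,\Gal(K/\Q_2))$-modules $D(E)\simeq D$, where $E=E_0\otimes_{\F_2}k$. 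Transport the filtration: let $\Fil^1 D(E)_K$ be the $\Gal(K/\Q_2)$-stable line corresponding to $\Fil^1 D_K$.

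\textbf{Step 1: lift to $\cl O_K$.} Since $K/K_0$ is totally ramified and $\Fil^1 D(E)_K$ is a line in $D(E)_K$, Grothendieck--Messing deformation theory lets us lift the $2$-divisible group $E[2^\infty]$ to $\cl O_K$ with Hodge filtration this line. For $p=2$ I would use Kisin's (or Lau's) classification of $2$-divisible groups over $\cl O_K$ via Breuil--Kisin modules in place of Volkov's odd-$p$ argument. More concretely: the filtered $\varphi$-module $(D(E),\Fil)$ over $K$ is weakly admissible by Lemma~\ref{L: invariant subspaces} (or by direct check in the $C_4$ case). It therefore corresponds to a crystalline representation $V$ of $G_K$ with Hodge--Tate weights $(0,1)$. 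By Breuil/Kisin (including $p=2$), $V$ comes from a $2$-divisible group $\cl G/\cl O_K$ whose special fiber is isogenous to $E[2^\infty]$. After modifying by an isogeny, the special fiber is $E[2^\infty]$ itself. Serre--Tate then gives an elliptic curve $\mathcal E/\cl O_K$ lifting $E$ with $\mathcal E[2^\infty]=\cl G$.

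\textbf{Step 2: extend the action and descend.} Because $\Fil^1$ is $\Gal(K/\Q_2)$-stable, the semilinear action of $\Gal(K/\Q_2)$ on $D(E)$ preserves the filtration. Via Fontaine's comparison $\gamma_{\cl G}$, which we showed is compatible with the actions $b_g$ and $c_g$, this action gives a filtered $(\varphi,\Gal(K/\Q_2))$-module structure isomorphic to $D$. By the antiequivalence $D^*_{pcris}$, this extends the $G_K$-action on $T_2(\mathcal E)$ to a $G_{\Q_2}$-representation with $D^*_{pcris}\simeq D$, up to passing from rational to integral objects.

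Before applying the descent criterion, I would first descend $\mathcal E_K$ to $L$. The element $t$ acts on $D$ with $M(t)=\Id$, which corresponds to $\nu(t)=f$, the canonical descent datum from $E_0$. So the $\langle t\rangle$-part defines a $\Gal(K/L)$-descent datum, and Serre--Tate with Galois descent produces $E_L/L$ whose $G_L$-action is the restricted one.

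Now check the two hypotheses of Lemma~\ref{L: Volkov criterion}. The first holds by construction. The second holds because the induced inertia action on $D(\tilde E)$ is $c_{\nu(g)}$, which comes from the automorphisms $\nu(g)\in\Gamma\subset\Aut_k(\tilde E)$. The lemma then gives $E_0'/\Q_2$ with $T_2(E'_0)$ equal to the extended action, hence $D^*_{cris,K}(V_2(E'_0))\simeq D$.

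\textbf{Main obstacle.} The hardest step is Step 1, producing the lift at $p=2$ and matching it integrally. Volkov's proof uses Fontaine--Laffaille/Breuil results valid only for $p$ odd. I would first try Kisin's theorem (extended to $p=2$ by Kim, Lau and Liu) that every crystalline $\Z_2$-lattice with weights in $[0,1]$ comes from a $2$-divisible group over $\cl O_K$. I would then check that the extended $G_{\Q_2}$-action stabilizes some lattice, which holds because $G_{\Q_2}$ is compact. A second delicate point is ensuring the special fiber is exactly $E$, not merely isogenous, so that the inertia acts through genuine automorphisms. I would handle this by choosing the lattice corresponding to the Dieudonn\'e lattice $\ma D(E[2^\infty])$, which is stable under $\Gamma$ and $\varphi$.
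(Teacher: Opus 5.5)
There is a genuine gap at the step you yourself flag as delicate. The missing step is obtaining a lift whose special fibre is the $2$-divisible group of an elliptic curve on which $I_{K/\Q_2}$ acts, compatibly with the extended $G_{\Q_2}$-action, through $k$-automorphisms. This is the second hypothesis of Lemma~\ref{L: Volkov criterion}.

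Your two remedies are not valid mechanisms. These are ``modifying by an isogeny'' so that the special fibre becomes $E[2^\infty]$ itself, and choosing the lattice ``corresponding to'' $\mathbb{D}(E[2^\infty])$.
\begin{itemize}
\item There is no correspondence between stable lattices $T\subset V$ and Dieudonn\'e lattices in $D$ that lets you prescribe $\mathbb{D}(\mathcal{G}_T\otimes k)$.
\item Over a ramified base, a finite subgroup of the special fibre need not lift to a finite flat subgroup of $\mathcal{G}$, so you cannot move the special fibre at will.
\item An abstract isomorphism $G\simeq E[2^\infty]$ is not enough anyway. It must intertwine the induced $c_g$ with $D(\nu(g))$ for $g\in I_{K/\Q_2}$, and carry the Hodge line to a $\Gal(K/\Q_2)$-stable line.
\end{itemize}
Your ordering does not secure this. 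In Step~1, $\mathcal{G}$ is built from the $G_K$-representation alone. Step~2 then uses the compatibility of $\gamma_{\mathcal G}$ with $b_g,c_g$, which presupposes an extended action on $T_2(\mathcal G)$, in order to construct that very action. Separately, Grothendieck--Messing does not classify lifts along $\mathcal{O}_K\to k$ here, since this is not a nilpotent PD-thickening for ramified $K$ at $p=2$; only the Kisin--Liu route is viable.

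The missing idea, which is how the paper argues, is to modify the \emph{curve} rather than the lattice, in an inertia-equivariant way.
\begin{itemize}
\item First realize $D$ as $D^{*}_{cris,K}(V)$ for a $G_{\Q_2}$-representation $V$, and take any $G_{\Q_2}$-stable lattice $T$.
\item Apply Liu's theorem over $\mathcal{O}_L$. This is legitimate because $K/L$ is unramified, so $V|_{G_L}$ is crystalline.
\item Equivariance of Fontaine's comparison gives a $\Gal(K/\Q_2)$-equivariant $j\colon D(G)\to D(E)$. This is a quasi-isogeny $r_0\colon E_0[2^\infty]\dashrightarrow G_0$ over $\F_2$.
\item For the isogeny $q_0=2^nr_0$ one gets $\bar f_g\circ q=q\circ\nu(g)$, because Hom groups of $2$-divisible groups are $2$-torsion free. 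Hence $H_0=\ker q_0$ is $\nu(I_{K/\Q_2})$-stable.
\item The automorphisms therefore descend to $E_0'=E_0/H_0$, and $E_0'[2^\infty]\simeq G_0$. The induced inertia action on $D(E')$ is $D(\nu'(g))$, because the resulting equivariant map $D(E')\to D(E)$ is $2^{-n}D(\pi)$.
\end{itemize}
Serre--Tate and algebraization then give the curve directly over $\mathcal{O}_L$. This also eliminates your extra descent from $K$ to $L$, which is not justified as stated. A semilinear $t$-action on $D$ is not by itself a descent datum on $\mathcal E_K$. Turning it into one needs the same Tate/Serre--Tate argument and the same unresolved identification of the special fibre.

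Your intuition is salvageable in these particular supersingular cases. Every Dieudonn\'e lattice in $D(E_0)$ is of the form $\varphi_0^n\mathbb{D}(E_0[2^\infty])$, because $\mathbb{Z}_2[\varphi_0]$ is a discrete valuation ring with uniformizer $\varphi_0$. Inside the equivariant setup above, this would justify ``special fibre is $E$ itself''. However, you neither state nor prove this.
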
 
\begin{proof}
    By Remark~\ref{R: reduce to 3,4,8,24}, it suffices to prove the result for $e\in\{3,4,8,24\}$. Thus, assume that $D$ is one of the modules corresponding to these values of the semistability defect. In the cases $e=8$ and $e=24$, we know that there is a Galois pair for at least one of the two signs; in these cases, let $D$ be a module with such a sign. 

    By the antiequivalence of categories between Galois representations over $\Q_2$ that become crystalline over $K$ and $\fpgmt$s, there exists a $G_{\Q_2}$-representation $V$ with an isomorphism $i_{cris}:D_{cris,K}^{*}(V)\xrightarrow{\sim} D$.
    
     Recall that for an elliptic curve we defined $D(E):=\ma D(E[2^\infty])[1/2]$. By Lemma~\ref{L: Galois pairs exist}, there exists a Galois pair $(E_0,\Gamma,\nu)$ with an isomorphism $i_E:D(E)\xrightarrow{\sim} D$ of unfiltered $(\varphi,\Gal(K/\Q_2))$-modules. We denote the action of $g\in\Gal(K/\Q_2)$ on $D(E)$ by $a_g$. Taking $\Gal(K/L)$-invariants gives $i_{E_0}:D(E_0)\xrightarrow{\sim}D_0$, where $D_0=D^{\Gal(K/L)}=D^{\langle t\rangle}$.

    Recall that $L/\Q_2$ is one of the totally ramified extensions in Table~\ref{Ta: InTypes} and $K$ its Galois closure. Let $T$ be a $G_{\Q_2}$-stable lattice in $V$. Since $K/L$ is unramified and $V\vert_{G_K}$ is crystalline, $V\vert_{G_L}$ is crystalline, so $T\vert_{G_L}$ is crystalline. Since $T$ also has Hodge--Tate weights $(0,1)$, it follows from \cite[Theorem~2.2.1]{Liu2013} that there exists a $2$-divisible group $\cl G_0/\cl O_L$ and an isomorphism $\theta:T\vert_{G_L} \xrightarrow{\sim}T_2(\cl G_0)$ of $G_L$-representations. Let $G_0=\cl G_0\otimes_{\cl O_L}\F_2$, $\cl G=\cl G_0\otimes_{\cl O_L}\cl O_K$ and $G=G_0\otimes_{\F_2}k$. Via $\theta$, we can transport the $G_{\Q_2}$-action on $T$ to $T_2(\cl G)$. By the construction described at the beginning of \S11.3, we can equip $D_{cris,K}^{*}(V_2(\cl G))$ and $D(G)$ with actions $b_g$ and $c_g$, respectively, so that Fontaine's comparison isomorphism $\gamma_{\cl G}:D(G)\xrightarrow{\sim} D_{cris,K}^{*}(V_2(\cl G))$ is $\Gal(K/\Q_2)$-equivariant and $D_{cris,K}^{*}(\theta):D_{cris,K}^{*}(V_2(\cl G_0))\xrightarrow{\sim}D_{cris,K}^{*}(V)$ is an isomorphism. In this way, we obtain a $\Gal(K/\Q_2)$-equivariant isomorphism: $$i_{G}:D(G)\xrightarrow{\gamma_{\cl G}}D_{cris,K}^{*}(V_2(\cl G))\xrightarrow{D_{cris,K}^{*}(\theta)}D_{cris,K}^{*}(V)\xrightarrow{i_{cris}}D$$ As for $i_{E}$, taking $\Gal(K/L)$-invariants gives a map $i_{G_0}:D(G_0)\rightarrow D_0$.
    
    Let $j_0=i_{E_0}^{-1}\circ i_{G_0}$ and $j=i_{E}^{-1}\circ i_{G}$ be its extension of scalars to $K_0$. The Galois equivariance of $j$ gives $$j \circ c_g=a_g\circ j.$$

    For $g\in I_{K/\Q_2}$, the residue action is trivial, so $a_g=D(\nu(g))$ and $c_g=D(\overline f_g)$. By rational Dieudonn\'e theory, $j_0$ corresponds contravariantly to a quasi-isogeny $r_0:E_0[2^\infty]\dashrightarrow G_0.$ Let $r=r_0\otimes_{\F_2}k$, so that $D(r)=j$. Choose $n\geq0$ so that $q_0=2^nr_0$ is an isogeny, and put $q=q_0\otimes k=2^nr$. The equivariance identity gives $\overline f_g\circ r=r\circ\nu(g)$ in the quasi-isogeny category. Since homomorphism groups of $2$-divisible groups have no $2$-torsion, this implies the equality $\overline f_g\circ q=q\circ\nu(g).$
Thus $H=\ker(q)$ is stable under $\nu(g)$ for every $g\in I_{K/\Q_2}$. Put $H_0=\ker(q_0)$; then $H=H_0\otimes_{\F_2}k$. The group scheme $H_0$ is contained in $E_0[2^m]$ for some $m$, so it is a finite subgroup scheme of the elliptic curve $E_0$.

    If we consider the elliptic curve $E_0'=E_0/H_0$ together with the quotient map $\pi_0:E_0\rightarrow E'_0$, it follows that $2^nr_0$ factors through $\pi_0$ as $2^nr_0=\lambda_0\circ \pi_0$, where $\lambda_0:E_0'[2^{\infty}]\rightarrow G_0$ is an isomorphism. Let $E^{\prime}=E_0^{\prime}\otimes_{\F_2}k$ and $\pi=\pi_0\otimes_{\F_2}k$. Since $\nu(g)(H)=H$, it follows that for each $\nu(g)$ there exists a unique $\nu'(g):E'\rightarrow E'$ such that $\pi\circ\nu(g)=\nu'(g)\circ \pi$. Since $\nu$ is an antihomomorphism, the uniqueness of $\nu'(g)$ implies that $\nu':I_{K/\Q_2}\rightarrow Aut_k(E')$ is an antihomomorphism.

    We now use the Serre--Tate functor to produce the desired elliptic curve. For each $n$, let $R_n=\cl O_L/\p^n$. Applying the Serre--Tate functor see~\cite[Theorem 1.2.1]{KatzSerreTate1981} to $(E'_0,\cl G_0\otimes R_n,\lambda_0)$ produces a compatible system of elliptic schemes $\cl E_n/R_n$ with special fiber $E'_0$ together with isomorphisms $\lambda_n:\cl E_n[2^{\infty}]\rightarrow \cl G_0\otimes_{\cl O_L}R_n$ that lift $\lambda_0$. This provides a formal scheme $\hat{\cl E}$ which, by Grothendieck's criterion for algebraization \cite[Theorem~5.4.5]{EGAIII1} applied to the compatible system of ample sheaves $\cl O(3[0]_n)$, gives an elliptic curve $\tilde{E}_0/\cl O_L$ whose reduction is $E_0'$ together with an isomorphism $\Lambda_0:\tilde{E}_0[2^{\infty}]\rightarrow \cl G_0$ whose reduction is $\lambda_0$. Let $\tilde{E}=\tilde{E}_0\otimes_{\cl O_L}\cl O_K$ and $\Lambda:\tilde{E}[2^{\infty}]\rightarrow \cl G$.

    To finish the proof, we use Lemma~\ref{L: Volkov criterion}. Using $\theta^{-1}\circ T(\Lambda): T_2(\tilde{E})\rightarrow T$, we can extend the $G_K$-action on $T_2(\tilde{E})$ to a $G_{\Q_2}$-action. Since both $T(\Lambda)$ and $\theta$ are $G_L$-equivariant, it follows that $T_2(\tilde{E})$ with its natural $G_L$-action is isomorphic to the extended action induced by $\theta^{-1}\circ T(\Lambda)$. It only remains to show that the $I_{K/\Q_2}$-action induced on $D(E')$ by the extended $G_{\Q_2}$-action $\rho_{\tilde{E}}$ is given by automorphisms of $E'$. Since the $G_{\Q_2}$-action on $T_2(\tilde{E})$ is defined through $\theta^{-1}\circ T_2(\Lambda)$ and the action on $T_2(\cl G)$ is defined through $\theta$, $T_2(\Lambda)$ must be a $G_{\Q_2}$-isomorphism. Therefore, the following map is $\Gal(K/\Q_2)$-equivariant: $$\Phi:D(E')
\xrightarrow{\gamma_{\tilde E}}
D_{cris,K}^{*}(V_2(\tilde E))
\xrightarrow{D_{cris,K}^{*}(V_2(\Lambda))^{-1}}
D_{cris,K}^{*}(V_2(\cl G))
\xrightarrow{\gamma_{\cl G}^{-1}}
D(G)
\xrightarrow{j}
D(E)$$

Let $\lambda=\lambda_0\otimes_{\F_2}k$ be the reduction of
$\Lambda$. By the naturality of Fontaine's
comparison isomorphism, we have $D_{cris,K}^{*}(V_2(\Lambda))\circ\gamma_{\cl G}=\gamma_{\tilde E}\circ D(\lambda)$, so
$$
\Phi=j\circ\gamma_{\cl G}^{-1}
       \circ D_{cris,K}^{*}(V_2(\Lambda))^{-1}
       \circ\gamma_{\tilde E}
  =j\circ D(\lambda)^{-1}.
$$

Moreover, the equality $2^nr=\lambda\circ\pi$ gives $2^nj=D(2^nr)=D(\pi)\circ D(\lambda)$. It follows that $\Phi=2^{-n}D(\pi)$ and,
since $\Phi$ is $\Gal(K/\Q_2)$-equivariant, so is $D(\pi)$. If $c'_g$ denotes the action on $D(E')$ induced by
$\rho_{\tilde E}$, the
equivariance of $\pi$ for $g \in I_{K/\Q_2}$ reads $D(\pi)\circ c'_g=a_g\circ D(\pi)$. Applying the contravariant Dieudonn\'e functor to
$\nu'(g)\circ\pi=\pi\circ\nu(g)$ gives $D(\pi)\circ D(\nu'(g))=D(\nu(g))\circ D(\pi)=a_g\circ D(\pi)$.
Since $\pi$ is an isogeny, $D(\pi)$ is an isomorphism of rational Dieudonn\'e modules and so $c_g'=D(\nu'(g))$. Thus the induced inertia action on $D(E')$ comes from
$k$-automorphisms of $E'$, as required. Hence, by applying Lemma~\ref{L: Volkov criterion}, we deduce that the generic fiber of $\tilde{E}$ descends to an elliptic curve defined over $\Q_2$ whose rational Tate module is precisely $V$, as desired.

This proves the existence of elliptic curves whose $D_{pcris}^{*}(V_2(E))$ realizes each of the $\fpgmt$s with $e\in\{3,4\}$ and each of the modules with one of the signs for $e\in\{8,24\}$. By Lemmas~\ref{L: Twist for e=8} and~\ref{L: Twist for e=24}, twisting induces a bijection on the admissible filtrations, so we obtain curves for all modules with $e\in\{8,24\}$. This finishes the proof.
\end{proof}

\end{document}